\documentclass[12pt]{amsart}
\usepackage{amssymb}
\usepackage{longtable}
\newtheorem{Thm}{Theorem}[section]
\newtheorem{Prop}[Thm]{Proposition}
\newtheorem{Cor}[Thm]{Corollary}
\newtheorem{Lem}[Thm]{Lemma}
\theoremstyle{definition}
\newtheorem{Ex}[Thm]{Example}
\newtheorem{defi}[Thm]{Definition}
\newtheorem{Rem}[Thm]{Remark}

\oddsidemargin=0cm
\evensidemargin=0cm
\textwidth=16cm
\textheight=232mm
\unitlength=1mm
\newcommand{\A}{\mathcal{A}}
\newcommand{\C}{\mathbb{C}}
\newcommand{\Orb}{\mathbb{O}}
\newcommand{\U}{\mathcal{U}}
\newcommand{\J}{\mathcal{J}}
\newcommand{\g}{\mathfrak{g}}
\newcommand{\GL}{\operatorname{GL}}
\newcommand{\lf}{\mathfrak{l}}

\newcommand{\HC}{\operatorname{HC}}
\newcommand{\Walg}{\mathcal{W}}
\newcommand{\Hom}{\operatorname{Hom}}
\newcommand{\B}{\mathcal{B}}

\newcommand{\Spec}{\operatorname{Spec}}

\newcommand{\Weyl}{\mathbb{A}}
\newcommand{\slf}{\mathfrak{sl}}
\newcommand{\gr}{\operatorname{gr}}
\newcommand{\F}{\operatorname{F}}

\newcommand{\Q}{\mathbb{Q}}
\newcommand{\SL}{\operatorname{SL}}

\newcommand{\param}{\mathfrak{P}}
\newcommand{\Z}{\mathbb{Z}}

\newcommand{\quo}{/\!/}
\newcommand{\I}{\mathcal{I}}
\newcommand{\Str}{\mathcal{O}}
\address{Department
of Mathematics, Yale University, New Haven, CT 06511, USA}
\email{ivan.loseu@gmail.com}
\thanks{MSC 2010:  16S80, 17B35}
\numberwithin{equation}{section}
\title{Deformations of symplectic singularities and Orbit method for semisimple Lie algebras}
\author{Ivan Losev}
\begin{document}
\begin{abstract}
We classify filtered quantizations of conical symplectic singularities and use this to show
that all filtered quantizations of symplectic quotient singularities are spherical
symplectic reflection algebras of Etingof and Ginzburg. We further apply our classification
and a classification of filtered Poisson deformations obtained by Namikawa to establish a
version of the Orbit method for semisimple Lie algebras. Namely, we produce a natural
map from the set of coadjoint orbits of a semisimple algebraic group to the set of primitive
ideals in the universal enveloping algebra. We show that the map is injective for classical Lie
algebras and conjecture that in that case the image consists of the primitive ideals corresponding
to one-dimensional representations of W-algebras. Along the way, we get several new results on
the Lusztig-Spaltenstein induction for coadjoint orbits.
\end{abstract}
\maketitle
\begin{center}{\it Dedicated to Sasha Premet, on his 60th birthday, with admiration.}\end{center}
\markright{DEFORMATIONS OF SYMPLECTIC SINGULARITIES AND ORBIT METHOD}
\tableofcontents
\section{Introduction}
%The goal of this paper is to provide an algebraic version of the Orbit method
%for semisimple Lie algebras $\g$: we produce a natural bijection between the set
%of coadjoint orbits in $\g$ and certain objects that have to do with the universal
%enveloping algebra $U(\g)$. In general, the objects on the $U(\g)$-side will
%be certain Dixmier algebras quantizing  the algebras of regular functions
%on nilpotent orbits and certain covers of those. For the classical Lie algebras
%our description takes a more elegant form: we establish an embedding of
%$\g/G$ into the set of primitive ideals in $U(\g)$.

\subsection{Filtered deformations of Poisson algebras}\label{SS_intro_filt}
Our general setting is as follows. Let $A$ be a finitely generated Poisson algebra
over $\C$ that is equipped with
\begin{itemize}
\item an algebra grading $A=\bigoplus_{i=0}^\infty A_i$
such that $A_0=\C$ (such a grading will be called {\it positive})
\item and the Poisson bracket has degree $-d$, where $d$ is a positive
integer.
\end{itemize}
A basic example is as follows. We take a symplectic vector space $V$
and a finite group $\Gamma$ of its linear symplectomorphisms. Then we can take
the algebra $A=\C[V]^\Gamma$ of $\Gamma$-invariants in $\C[V]$, this is a graded
Poisson subalgebra of $\C[V]$ with $d=2$.

We are interested in  filtered deformations of $A$, i.e., in filtered associative
algebras $\A$ together with an isomorphism $\gr\A\xrightarrow{\sim} A$ of graded
algebras. We only consider two classes of deformations that are compatible
with the bracket on $A$. First, we consider {\it filtered Poisson deformations},
i.e., commutative algebras $\A^0$ equipped with a Poisson bracket decreasing the filtration degree by
$d$ such that $\gr\A^0\xrightarrow{\sim} A$ becomes a Poisson algebra isomorphism.
Second, we consider {\it filtered quantizations} $\A$. Those are non-commutative
algebras such that the Lie bracket decreases filtration degree by $d$ and
$\gr\A\xrightarrow{\sim} A$ is an isomorphism of  Poisson algebras.

We want to describe filtered Poisson deformations and filtered quantizations of $A$
up to an isomorphism (of deformations). This problem doesn't have a nice
 solution if we do not impose  any restrictions on $A$. We will assume that $X:=\operatorname{Spec}A$
has {\it symplectic singularities}. In this case, filtered Poisson deformations
were classified by Namikawa, \cite{Namikawa2}, while the classification of
quantizations is one of the main results of the present paper (Theorem \ref{Thm:quant_classif}).

\subsection{Symplectic singularities}
Recall, following Beauville, \cite{Beauville_sing}, the definition of a variety with symplectic
singularities. Let $X$ be a normal algebraic variety such that $X^{reg}$ carries a symplectic
form, $\omega^{reg}$. Since $X$ is normal, the form $\omega^{reg}$ gives rise to the
Poisson bracket on $\mathcal{O}_X$ so $X$ becomes a Poisson algebraic variety.

Let $\widehat{X}$ be a resolution of singularities for $X$. We say that $X$ {\it has  symplectic singularities}
if $\omega^{reg}$ extends to a regular form on $\widehat{X}$. In fact, this is independent of
the choice of $\widehat{X}$. Below in this section, we will assume that  $X:=\operatorname{Spec}(A)$
has symplectic singularities.

Symplectic quotient singularities $V/\Gamma$ provide an example of symplectic singularities,
\cite{Beauville_sing}. Another classical example arises as follows. We take a semisimple
Lie algebra $\g$ and the nilpotent cone $\mathcal{N}\subset \g^*$. Then $X:=\mathcal{N}$
has symplectic singularities (for $\widehat{X}$ we can take the Springer resolution $T^*(G/B)$).

Return to the general situation: $X:=\operatorname{Spec}(A)$ with symplectic singularities,
where $A$ a positively graded algebra.
Namikawa has constructed a ``Cartan space'' $\param(=\param_X)$ and a ``Weyl group'' $W(=W_X)$ for $X$.
The latter acts on the former as a crystallographic reflection group. In the case of the nilpotent cone,
we recover the Cartan subalgebra and the Weyl group of $\g$.

The simplest form of the main result of \cite{Namikawa2} can be stated as follows: the filtered Poisson
deformations are canonically indexed by the points of the quotient $\param/W$.
In this paper, we will show that %, under some minor technical assumptions on
%$A$ (that hold in all examples we know),
the filtered quantizations of $A$
are indexed by the points of the same quotient $\param/W$.

Let us explain how the quantizations are constructed in the general case.
The variety $X$ admits distinguished partial resolutions called {\it $\Q$-factorial
terminalizations}. The filtered Poisson deformations/quantizations of $X$ are essentially produced by
taking global sections of Poisson deformations/quantizations of $\mathcal{O}_{\tilde{X}}$,
where $\tilde{X}$ is a $\Q$-factorial terminalization of $X$. The filtered Poisson
deformations/quantizations of $\mathcal{O}_{\tilde{X}}$ are parameterized by the
points of $\param$. Moreover, $W$-conjugate parameters give rise to isomorphic algebras of
global sections, see e.g. \cite[Proposition 3.10]{BPW}.

In the case when $X=\mathcal{N}$ our result says that all filtered quantizations
of $\C[\mathcal{N}]$ are obtained as the central reductions of the universal enveloping
algebra $U(\g)$.

In the case of $A=\C[V]^{\Gamma}$ filtered deformations of $A$ were constructed algebraically
by Etingof and Ginzburg, \cite{EG}, as spherical subalgebras $e H_{t,c}e$ in symplectic
reflection algebras $H_{t,c}$, where $t\in \C$ and $c$ is a vector in a space of
dimension $\dim \param$. The algebra $eH_{t,c}e$ is a filtered Poisson deformation
of $\C[V]^\Gamma$ when $t=0$, and is a filtered quantization when $t=1$. Results
of  Bellamy, \cite{Bellamy},
show that any filtered Poisson deformation of $\C[V]^{\Gamma}$ is isomorphic to $eH_{0,c}e$
for some $c$.
The results of the present paper show that every filtered quantization of $\C[V]^\Gamma$ has the
form $eH_{1,c}e$ for some $c$. The connection to deformations of $\Q$-terminalizations turns out
to be an important tool to study the symplectic reflection algebras, \cite{SRA_der}.

\subsection{Orbit method}
One of the general principles of Lie representation theory is that interesting irreducible
representations of Lie groups or Lie algebras should have to do with the orbits
of the coadjoint representation of the corresponding group. The most famous manifestation
of this principle is Kirillov's orbit method, \cite{Kirillov}, that describes irreducible
unitary representations of nilpotent Lie groups. Namely, let $G$ be a nilpotent Lie group.
Kirillov has constructed a natural bijection between $\g^*/G$, the set of coadjoint
$G$-orbits, and the set of isomorphism classes of  irreducible unitary representations of $G$.

An algebraic version of this result was found by Dixmier, \cite{Dixmier}, see also
\cite[Section 6]{Dixmier_book}. Namely, let
$\g$ be a nilpotent Lie algebra over $\C$. Consider the universal enveloping algebra
$U(\g)$. Recall that by a {\it primitive ideal} in an associative algebra one means
the annihilator of  a simple module. One of the points of considering primitive ideals
is that, while the set of isomorphism classes of irreducible representations
is huge and wild for almost all $\g$, the set of primitive  ideals has reasonable size
and it is often possible to describe it. In \cite{Dixmier}, Dixmier has proved that
the set $\operatorname{Prim}(\g)$ of primitive ideals in $U(\g)$ is in a natural
bijection with $\g^*/G$.

For a long time, there was, and  still is, a question of how to adapt the Orbit method
to semisimple Lie groups or algebras that are far more interesting than
nilpotent ones from the representation theoretic perspective. In this paper, we study
the algebraic version and seek to find an analog
of Dixmier's result. The classification of primitive ideals in $U(\g)$ is known, thanks to
the work of Barbasch, Joseph, Lusztig, Vogan and others, see \cite[Section 10]{CM}
for a review, but a connection with (co)adjoint orbits is very subtle and indirect. The algebraic
version of the Orbit method was studied previously, for example, in  \cite{Vogan,McG2}.

To explain our result, we need the notion of a {\it Dixmier algebra} (due to Vogan, \cite{Vogan}). Let $G$ be
a semisimple algebraic group, $\g$ its Lie algebra.

\begin{defi} By a Dixmier algebra
we mean an algebra $\A$ equipped with a rational action of $G$ together
with a quantum comoment map $U(\g)\rightarrow \A$ such that $\A$ is finitely
generated as a $U(\g)$-module.
\end{defi}
In Theorem \ref{Thm:orb_method},
we will produce a natural embedding, $\Orb_1\mapsto
\A(\Orb_1)$, of $\g^*/G$ into the set of isomorphism classes of Dixmier algebras.
All algebras occurring in the image are {\it completely prime}, i.e., have no zero divisors.

For the classical Lie algebras, we can get a stronger result. Let $\J(\Orb_1)$
denote the kernel of $U(\g)\rightarrow \A(\Orb_1)$.  In Theorem \ref{Thm:orb_method},
we will see that the map $\Orb_1\mapsto \J(\Orb_1)$ is injective provided $\g$
is classical (we also expect this to be the case when $\g$ is exceptional). The ideals
in the image correspond to 1-dimensional representations of W-algebras and, for $\g$ classical,
we expect that all such ideals occur in the image\footnote{This conjecture has been recently proved in
\cite{Topley}.}.

\subsection{Birational induction}\label{SS_bir_ind}
Let us elaborate on how the embedding of $\g^*/G$ into the set of completely prime
Dixmier algebras is constructed. First, we will identify $\g^*/G$ with the set of equivalence
classes of {\it birationally minimal induction data} defined as follows.

Take a Levi subalgebra $\lf\subset\g$, a nilpotent orbit $\Orb'\subset \lf^*$,
and $\xi\in (\lf/[\lf,\lf])^*$. Following Lusztig and Spaltenstein, \cite{LS}, we include
$\lf$ into a parabolic subalgebra $\mathfrak{p}=\lf\ltimes \mathfrak{n}$. We write
$\mathfrak{p}^\perp$ for the annihilator of $\mathfrak{p}$ in $\g^*$.
Consider the generalized Springer map $G\times^P(\xi+\overline{\Orb}'+\mathfrak{p}^\perp)\rightarrow \g^*$. Here we
view $\xi+\overline{\Orb}'+\mathfrak{p}^\perp$ as a  closed subvariety in $\mathfrak{n}^\perp$, this
subvariety is $P$-stable. The image  of the generalized Springer map is the closure of a single $G$-orbit
and it is generically finite onto its image.  We denote this map considered as a map onto
its image by $\pi$.

The following definition introduces an important terminology
to be used in the paper.

\begin{defi}\label{defi:induction}
Recall that $\lf$ stands for a Levi subalgebra of $\g$, $\Orb'$ for a nilpotent orbit in $\lf^*$
and $\xi$ for an element of $(\lf/[\lf,\lf])^*$.
\begin{itemize}
\item A triple
$(\lf,\Orb',\xi)$ is called an {\it induction datum}.
\item The orbit $\Orb_\xi$ is said to be {\it induced} from $(\lf,\Orb',\xi)$.
\item  When $\pi$ is generically injective, i.e., birational, we will say that $\Orb_\xi$
is {\it birationally induced} from $(\lf,\Orb',\xi)$.
\item We say that $\Orb'$ is {\it birationally
rigid} if it cannot be birationally induced from a proper Levi.
\item If $\pi$ is birational and $\Orb'$
is birationally rigid, then we say that $(\lf,\Orb',\xi)$
is a {\it birationally minimal induction datum}.
\end{itemize}
\end{defi}

The group $G$ acts naturally on the set
of  induction data and one can show that the map $(\lf,\Orb',\xi)\mapsto
\Orb_\xi$ is a bijection between the set of $G$-equivalence classes of the
birationally minimal induction data and the set $\g^*/G$ of coadjoint orbits,
see Theorem \ref{Thm:bir_sheets}.

Now pick a coadjoint orbit $\Orb_1$ and let $(\lf,\Orb',\xi)$ be the corresponding
birationally minimal induction datum. Let $G/H$ denote the open $G$-orbit in
$G\times^P(\overline{\Orb}'+\mathfrak{n})$. This is a finite cover of some nilpotent orbit
$\Orb$ and hence a symplectic variety. Moreover, $X:=\operatorname{Spec}(\C[G/H])$
has symplectic singularities and $\param=(\lf/[\lf,\lf])^*$,
see Proposition \ref{Prop:NW_comput}. We let $\A(\Orb_1)$
to be the quantization of $\C[X]$ corresponding to the parameter $\xi$. This is
the Dixmier algebra that we assign to $\Orb_1$.

\begin{Rem}
In this paper, we only need to consider induction from coadjoint orbits. More generally,
one can consider induction from their covers. Many  geometric results of this  paper generalize to that
setting, see, for example, \cite{Mitya},\cite{Namikawa_cover}.
\end{Rem}

\subsection{Content of the paper}
Let us describe the content of this paper.

In Section \ref{S_sympl} we discuss symplectic singularities, their $\Q$-factorial terminalizations
and their deformations. In Section \ref{SS_sympl_sing_Q_term} we introduce the notion of
a symplectic singularity following Beauville, give some examples and discuss the
$\Q$-factorial terminalizations.  In Section \ref{SS_filt_deform} we recall the notion
of a filtered Poisson deformation of a not necessarily affine Poisson variety and
the classification of such deformations of $\Q$-factorial terminalizations following
\cite{Namikawa_flop}.
In Section \ref{SS_deform_Q_fact} we apply this to studying  Poisson deformations
 of  conical
symplectic singularities. In Section \ref{SS_univ_deform} we recall results of
Namikawa, \cite{Namikawa1,Namikawa2}, on the universal conical Poisson deformation
of a conical symplectic singularity. The only new results in Section \ref{S_sympl}
are contained in Section \ref{SS_Poiss_deriv}, where we study the
negative degree Poisson derivations of  the algebra of functions on a conical symplectic
singularity.

In Section \ref{S_quant} we study quantizations of conical symplectic singularities.
We start, Section \ref{SS_quant_period}, by recalling the general definition
of a filtered quantization and classification results in the symplectic
case obtained in \cite{BK_quant,quant_iso}. In Section \ref{SS_quant_Q_term} we
produce quantizations of a conical symplectic singularity starting from
those of its $\Q$-terminalization following \cite{BPW}.  Also in that section
we state the main classification result, Theorem \ref{Thm:quant_classif}.
This theorem is then proved in the three subsequent sections.  Then,
in Section \ref{SS_SRA} we treat the case of symplectic quotient singularities
and show that in this case all quantizations are spherical symplectic reflection
algebras of Etingof and Ginzburg, \cite{EG}. Finally,
 in Section \ref{SS_iso},
we study the question of when filtered Poisson deformations (resp., quantizations)
are isomorphic as filtered Poisson (resp., associative) algebras.

In Section \ref{S_bir_sheet} we study various questions related
to the geometry of coadjoint orbits. In Section \ref{SS_LS_induction} we recall
some basic results on the Lusztig-Spaltenstein induction. In Section
\ref{SS_bir_sheet} we recall sheets in semisimple Lie algebras,
introduce the related notion of {\it birational sheets} and
state a result, Theorem \ref{Thm:bir_sheets}, describing their structure.
In Section \ref{SS_nilp_Q_term} we study $\Q$-terminalizations
of normalizations of nilpotent orbit closures giving conceptual proofs
of results previously obtained by Namikawa, \cite{Namikawa_induced},
and Fu, \cite{Fu}.  Then we compute Weyl groups of these normalizations
and their suitable covers, Section \ref{SS_Weyl_comput}, generalizing
results of Namikawa, \cite[Section 2]{Namikawa2}. We use results of Sections
\ref{SS_nilp_Q_term} and \ref{SS_Weyl_comput} to prove Theorem
\ref{Thm:bir_sheets} in Section \ref{SS_sheet_conseq}.

In Section \ref{S_W_Orb} we establish our version of the orbit
method. We start by recalling results and constructions
related to W-algebras. In Section \ref{SS_W_algebras} we present
their construction following \cite{Wquant,HC}. Then in
Section \ref{SS_HC_restr} we recall the restriction functor
for Harish-Chandra bimodules from \cite{HC} and its basic properties.
Section \ref{SS_Orb_method} is the main part, there we state  and prove
Theorem \ref{Thm:orb_method} that provides a map $\g^*/ G\rightarrow
\operatorname{Prim}(\g)$. We discuss the image of this map
in Section \ref{SS_image}.

{\bf Acknowledgements}. This paper would have never appeared without help
from Pavel Etingof and Dmitry Kaledin. I would like to thank them
as well as Yoshinori Namikawa, Sasha Premet, and David Vogan for
stimulating discussions. Finally, I would like to thank the referees for numerous comments
that helped me to improve the exposition. I am very happy to dedicate the paper
to Sasha Premet on his 60th birthday, this paper, as well as
much of my other work, is inspired by his fascinating results.
The paper was partially supported by the NSF under grants DMS-1161584, DMS-1501558.
This work has also been funded by the  Russian Academic Excellence Project '5-100'.

\section{Symplectic singularities and their deformations}\label{S_sympl}
\subsection{Symplectic singularities and $\Q$-terminalizations}\label{SS_sympl_sing_Q_term}
Let $X$ be a normal Poisson algebraic variety over $\C$ such that the smooth locus $X^{reg}$
is a symplectic variety. Let $\omega^{reg}$ denote the symplectic form on $X^{reg}$.
Following \cite[Definition 1.1]{Beauville_sing}, we say that $X$ has {\it symplectic singularities} if there is a projective resolution of singularities
$\rho:\widehat{X}\rightarrow X$ such $\rho^*(\omega^{reg})$ extends to a regular (but not necessarily
symplectic) $2$-form on  $\widehat{X}$. Once such $\widehat{X}$ exists, for any other projective resolution
$\rho_1:\widehat{X}_1\rightarrow X$, the form $\rho_1^*(\omega^{reg})$ also extends to a regular 2-form,
see \cite[(1.2)]{Beauville_sing}.

Now let $X$ be an affine Poisson variety. We say that $X$ is {\it conical} if there is an algebra grading
$\C[X]=\bigoplus_{i=0}^{\infty}\C[X]_i$ and a positive integer $d$ such that
\begin{itemize}
\item $\C[X]_0=\C$.
\item $\{f,g\}\in \C[X]_{i+j-d}$ for any $i,j$ and $f\in \C[X]_i, g\in \C[X]_j$.
\end{itemize}

By a {\it conical symplectic singularity} we mean a conical variety with symplectic
singularities. Let us give two classical examples of conical symplectic singularities,
see \cite[(2.5),(2.6)]{Beauville_sing}.

\begin{Ex}\label{Ex:sympl_quot}
Let $V$ be a symplectic vector space and $\Gamma\subset \operatorname{Sp}(V)$ be a finite
subgroup. Then $V/\Gamma$ is a conical symplectic singularity with $d=2$.
\end{Ex}

\begin{Ex}\label{Ex:nilp_orbit}
Let $\g$ be the Lie algebra of an algebraic group.
Then $\g^*$ is a Poisson algebraic variety whose symplectic
leaves are coadjoint orbits. Assume that $\g$ is semisimple. Let $\Orb$ be a nilpotent orbit in $\g^*$.
The algebra
$\C[\Orb]$ is finitely generated, in fact, it is the normalization of $\C[\overline{\Orb}]$.
The variety $X:=\operatorname{Spec}(\C[\Orb])$ is a conical symplectic singularity
with $d=1$.
\end{Ex}

We will need the definition and some properties of $\Q$-factorial terminalizations.
Let $\tilde{X}$ be a normal algebraic variety. Recall that  $\tilde{X}$ is called {\it $\Q$-factorial}
if for any Weil divisor a nonzero integral multiple is Cartier.

\begin{Prop}[Proposition 2.1 in \cite{SRA_der}]\label{Prop:terminalization}
Let $X$ have symplectic singularities. Then there is a birational projective morphism
$\rho:\tilde{X}\rightarrow X$, where $\tilde{X}$ has the following properties:
\begin{itemize}
\item[(a)] $\tilde{X}$ is an irreducible, normal, Poisson variety (and hence has symplectic singularities).
\item[(b)] $\tilde{X}$ is $\Q$-factorial.
\item[(c)] $\tilde{X}$ has terminal singularities.
\item[(d)] $\operatorname{codim}_{\tilde{X}}\tilde{X}^{sing}\geqslant 4$.
\item[(e)] If $X$ is, in addition, conical, then $\tilde{X}$ admits a $\C^\times$-action such that
$\rho$ is $\C^\times$-equivariant.
\end{itemize}
\end{Prop}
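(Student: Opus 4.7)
The plan is to construct $\tilde{X}$ by running the relative Minimal Model Program over $X$, starting from any projective resolution, which is exactly what the citation to \cite{BCHM} suggests. First, I would fix a projective resolution $\pi:\widehat{X}\to X$, available by Hironaka. The symplectic singularity hypothesis provides the key input: $\pi^{*}\omega^{reg}$ extends to a regular $2$-form $\omega$ on $\widehat{X}$, and its top power $\omega^{n/2}$ with $n=\dim X$ gives a global section of $K_{\widehat{X}}$ which is nonvanishing on $\pi^{-1}(X^{reg})$. Hence $K_{\widehat{X}}\sim E$ for some effective $\pi$-exceptional divisor $E$, so in particular the pair $(\widehat{X},0)$ is klt and $K_{\widehat{X}}$ is $\pi$-pseudoeffective.

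Second, I would invoke \cite{BCHM} to run the relative $K_{\widehat{X}}$-MMP over $X$. Termination in this klt setting produces, after finitely many divisorial contractions and flips, a variety $\rho:\tilde{X}\to X$ that is projective and birational over $X$, $\Q$-factorial, and with $K_{\tilde{X}}$ relatively nef. Because the starting $K$-divisor was $\pi$-exceptional and effective, the Negativity Lemma forces $K_{\tilde{X}}=0$, i.e.\ $\rho$ is crepant. This yields (a) and (b) at once: $\tilde{X}$ is normal, its $\Q$-factoriality is part of the MMP output, the form $\omega$ descends through each MMP step to a regular form on $\tilde{X}$, nondegenerate on $\tilde{X}^{reg}$ since $K_{\tilde{X}}=0$, and thereby equips $\tilde{X}$ with a Poisson structure with symplectic singularities. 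For (c), terminality follows because any remaining $\rho$-exceptional divisor would have discrepancy zero relative to $X$ and thus would have been contracted by a further crepant MMP step; equivalently, the minimal model obtained from a klt pair with $K$-trivial limit is automatically terminal in the relative sense.

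Third, for the conical case I would replace the initial resolution by a $\C^\times$-equivariant projective resolution $\widehat{X}\to X$, which exists by the equivariant form of Hironaka's theorem applied to the reductive group $\C^\times$. At each step of the relative MMP, the $K$-negative extremal rays of the relative Mori cone and the ample polarizations used to contract or flip them are canonical, hence preserved by $\C^\times$; because $\C^\times$ is reductive, equivariant polarizations can always be arranged. The program therefore proceeds in the $\C^\times$-equivariant category, and the final $\rho:\tilde{X}\to X$ inherits a regular $\C^\times$-action making $\rho$ equivariant.

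The main obstacle is the second step: guaranteeing that the relative MMP actually terminates and outputs a $\Q$-factorial terminal model in our crepant setup. That is precisely the content of \cite{BCHM}, so the substantive work of the proof is just to package our geometric input correctly. The $\C^\times$-equivariance in the third step is a standard bookkeeping upgrade rather than a new difficulty.
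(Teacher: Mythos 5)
Your plan — extend $\pi^*\omega^{reg}$ to exhibit $K_{\widehat{X}}$ as an effective $\pi$-exceptional divisor, run the relative $K$-MMP over $X$ using \cite{BCHM}, deduce crepancy from the negativity lemma (together with $X$ being canonical with trivial $K_X$), and upgrade everything $\C^\times$-equivariantly via equivariant Hironaka and invariant polarizations — is exactly the derivation the paper is pointing to with its references to \cite{BCHM} and \cite[Section 1]{Namikawa1}, and Steps 1 and 3 are fine as sketched.

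The one place the argument goes wrong is the justification of (c). It is not true that a $\rho$-exceptional divisor on $\tilde{X}$ of discrepancy zero over $X$ ``would have been contracted by a further crepant MMP step'': the MMP contracts only $K$-negative extremal rays, it has already terminated, and a $\Q$-factorial terminalization is precisely \emph{allowed} to keep $\rho$-exceptional divisors — all of which have discrepancy $0$ over $X$ by crepancy. Likewise, ``the minimal model obtained from a klt pair with $K$-trivial limit is automatically terminal'' is not a theorem: a Gorenstein canonical singularity with trivial canonical class can admit crepant $\Q$-factorial partial resolutions that fail to be terminal, so $K$-triviality alone does not give (c). The correct reason is that each step of a $K$-MMP (a divisorial contraction or a flip of a $K$-negative extremal ray) preserves terminality (Koll\'ar--Mori, Cor.~3.42--3.43); since $\widehat{X}$ is smooth, hence terminal, the output $\tilde{X}$ is terminal. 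Concretely, for any prime divisor $F$ over $\tilde{X}$, minimality of $\tilde{X}$ over $X$ gives $a(F,\tilde{X})\geqslant a(F,\widehat{X})$, with strict inequality when $F$ lives on $\widehat{X}$ and is contracted by $\widehat{X}\dashrightarrow\tilde{X}$, while $a(F,\widehat{X})\geqslant 1$ when $F$ is exceptional over the smooth $\widehat{X}$; hence $a(F,\tilde{X})>0$ for every $F$ exceptional over $\tilde{X}$.
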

%\begin{proof}
%This is essentially a consequence of results of \cite{BCHM}. A formal proof is given in
%\cite[Proposition 2.1]{SRA_der}.
%\end{proof}

%The following is a consequence of results of \cite{BCHM}, see also \cite[Section 1]{Namikawa1}
%and \cite[Proposition 2.1]{SRA_der}.
%By the work of Namikawa, \cite{Namikawa_term}, modulo (a),  condition  (c) is
%equivalent to $\operatorname{codim}_{\tilde{X}}\tilde{X}\setminus \tilde{X}^{reg}\geqslant 4$.

Below we will say that $\tilde{X}$ is a {\it $\Q$-factorial terminalization} (or
{\it $\Q$-terminalization}) of $X$.

\begin{Ex} Consider the situation of Example \ref{Ex:sympl_quot}. Suppose, first, that
$\Gamma$ does not contain {\it symplectic reflections}, i.e., elements $\gamma$ with
$\operatorname{rk}(\gamma-\operatorname{id})=2$. Then $X$ itself satisfies properties
(a)-(e). An opposite extreme is when $X$ has a symplectic resolution of singularities.
This happens, for example, when $\Gamma$ is a so called {\it wreath-product} group
$\Gamma=S_n\ltimes \Gamma_1^n$, where $\Gamma_1$ is a finite subgroup of
$\SL_2(\C)$ and $V=\C^{2n}$.

$\Q$-terminalizations of the varieties $\Spec(\C[\Orb])$ will be considered in more
detail in Section \ref{SS_nilp_Q_term}.
\end{Ex}

To finish let us provide an important property of a terminalization $\tilde{X}$.
% obtained
%by Namikawa, see the proof of \cite[Lemma 12]{Namikawa_flop}.

\begin{Prop}\label{Prop:coh_vanish}
If $X$ is affine, then $\C[\tilde{X}^{reg}]=\C[\tilde{X}]=\C[X],$ $H^i(\tilde{X},\mathcal{O}_{\tilde{X}})=0$ for all $i>0$, and  $H^i(\tilde{X}^{reg}, \mathcal{O}_{\tilde{X}})=0$ for $i=1,2$.
\end{Prop}
\begin{proof}
While this is classical (see, e.g., the proof of
\cite[Lemma 12]{Namikawa_flop}), we provide a proof for reader's convenience.

Let $\iota$ denote the inclusion $\tilde{X}^{reg}\hookrightarrow \tilde{X}$.
Since $\tilde{X},X$ are singular symplectic, (a) of Proposition
\ref{Prop:terminalization}, they have rational singularities,
see \cite[Proposition 2.3]{Beauville_sing}. In particular, they are Cohen-Macaulay.
Thanks to this, we have
$\iota_* \mathcal{O}_{\tilde{X}^{reg}}=\mathcal{O}_{\tilde{X}},
R^i\iota_*\mathcal{O}_{\tilde{X}^{reg}}$ for $0<i<\operatorname{codim}_{\tilde{X}}\tilde{X}^{sing}-1$,
i.e., by (d) of Proposition \ref{Prop:terminalization}, for $i=1,2$. This reduces
the proof to showing that $\C[\tilde{X}]=\C[X]$ and  $H^i(\tilde{X}, \mathcal{O}_{\tilde{X}})=0$
for $i>0$. These two claims follow  because $\tilde{X}\rightarrow X$
is proper and birational and $X$ has rational singularities.
\end{proof}

\subsection{Filtered Poisson deformations}\label{SS_filt_deform}
Let $X'$ be a  Poisson variety.
%equipped with a $\C^\times$-action
%such that the Poisson bracket has degree $-d$ for some $d\in \Z_{>0}$ and the following property holds:
%\begin{itemize}
%\item[($\heartsuit$)] $X'$ can be covered by $\C^\times$-stable open affine subschemes.
%\end{itemize}
%We note that ($\heartsuit$) holds when $X'$ is  normal, a result of Sumihiro, \cite{Sumihiro}.

We are going to recall three notions of Poisson deformations of
$X'$.
%First, let us introduce the conical topology on $X'$.
%By an open
%subset in this topology we mean a Zariski open and $\C^\times$-stable
%subset. So ($\heartsuit$) means that every point has an affine neighborhood.
%Note that $\mathcal{O}_{X'}$ becomes a
%sheaf of graded algebras in the conical topology.

%By a {\it filtered
%Poisson deformation} of $X'$ we mean a pair $(\mathcal{D}^0,\iota)$, where
%\begin{itemize}
%\item $\mathcal{D}^0$ is a sheaf of filtered Poisson
%algebras, $\mathcal{D}^0=\bigcup_{i\in \Z}\mathcal{D}^0_{\leqslant i}$,
%in the conical topology on $X'$, such that $\{\mathcal{D}^0_{\leqslant i},\mathcal{D}^0_{\leqslant j}\}
%\subset \mathcal{D}^0_{\leqslant i+j-d}$ and the filtration
%is complete and separated,
%\item
%and $\iota$ is an isomorphism $\gr\mathcal{D}^0\xrightarrow{\sim} \mathcal{O}_{X'}$
%of graded Poisson algebras.
%\end{itemize}

%We say that deformations $(\mathcal{D}^0_1,\iota_1)$ and $(\mathcal{D}^0_2,\iota_2)$
%are {\it isomorphic} if there is a filtered Poisson isomorphism $\psi:\mathcal{D}^0_1\rightarrow
%\mathcal{D}^0_2$ such that $\iota_1=\iota_2\circ \operatorname{gr}\psi$. Often $\iota$
%is clear from the context and we just say that $\mathcal{D}^0$ is a filtered Poisson
%deformation. The reader should keep in mind, however, that the sheaf of filtered Poisson algebras %$\mathcal{D}^0$ does not
%determine a deformation uniquely up to an isomorphism.

%In the case when $X'$ is affine to give a filtered Poisson deformation
%of $X'$ is equivalent to giving a filtered Poisson deformation of
%the graded Poisson algebra $\C[X']$.

We can talk about formal Poisson deformations of $X'$. Let $\widehat{B}$ be a quotient of $\C[[\hbar_1,\ldots,\hbar_k]]$ for some $k>0$. By definition, a {\it formal
Poisson deformation} of $X'$ over $\widehat{B}$  a pair $(X'_{\widehat{B}},\iota)$, where
\begin{itemize}
\item $X'_{\widehat{B}}$ is a flat formal Poisson scheme over the formal spectrum $\operatorname{Specf}(\widehat{B})$
\item
and $\iota$ is an isomorphism $X'_{\widehat{B}}\times_{\operatorname{Specf}(\widehat{B})}\operatorname{pt}\xrightarrow{\sim} X'$
of Poisson schemes.
\end{itemize}
An isomorphism of deformations $(X'^1_{\widehat{B}},\iota^1),(X'^2_{\widehat{B}},\iota^2)$ is an isomorphism of formal Poisson schemes
$X'^1_{\widehat{B}}\xrightarrow{\sim} X'^2_{\widehat{B}}$ over $\operatorname{Specf}(\widehat{B})$ such that the induced isomorphism
$X'^1_{\widehat{B}}\times_{\operatorname{Specf}(\widehat{B})}\operatorname{pt}\xrightarrow{\sim}
X'^2_{\widehat{B}}\times_{\operatorname{Specf}(\widehat{B})}\operatorname{pt}$ intertwines $\iota^1$ with $\iota^2$.

Now suppose that $X'$ is
equipped with a $\C^\times$-action
such that the Poisson bracket has degree $-d$ for some $d\in \Z_{>0}$.
Let $\widehat{B}$ be the completion at $0$ of a positively graded algebra.
In this situation  we can talk about {\it graded} formal Poisson deformation of $X'$.
By definition, this is a formal deformation $(X'_{\widehat{B}},\iota)$
together with an algebraic action of $\C^\times$ on $X'_{\widehat{B}}$  subject to the following
properties:
\begin{itemize}
\item[(i)] The action rescales the Poisson bracket by
$t\mapsto t^{-d}$.
\item[(ii)] It is compatible with the action on $\widehat{B}$.
\item[(iii)] And $\iota$ is $\C^\times$-equivariant.
\end{itemize}
An isomorphism of graded formal deformations is defined similarly to that of formal deformations.

Let $B$ be a positively graded algebra. We can introduce the notion of a {\it graded Poisson deformation} $X'_B$ of $X'$ similarly to the notion of a graded formal deformation. The difference is
that now $X'_B$ is an actual scheme over $\operatorname{Spec}(B)$. We note that
from a graded deformation one can naturally get a graded formal deformation: replacing $B$
with its completion at $0$ and $X'_B$ by the formal neighborhood of $X'$ in $X'_B$ to be denoted
by $X'^\wedge_B$.

\begin{Prop}\label{Prop:KV_deform}
Let $\tilde{X}$ be a $\Q$-factorial terminalization of a conical symplectic singularity. Set $\param:=H^2(\tilde{X}^{reg},\C)$. Then there is a universal graded deformation $\tilde{X}_\param$ of $\tilde{X}$ over $\param$. The universal property is understood as follows: for any graded Poisson deformation $\tilde{X}_B$ over $\operatorname{Spec}(B)$
there is a unique $\C^\times$-equivariant morphism $\Spec(B)\rightarrow \param$ and an
isomorphism of graded deformations $\Spec(B)\times_\param \tilde{X}_\param\xrightarrow{\sim}
\tilde{X}_B$ (which is not required to be unique).
%Suppose $X'$ is smooth and symplectic.
%Suppose also that $H^i(X',\mathcal{O}_{X'})=0$ for $i=1,2$. There is a bijection between the isomorphism %classes of filtered Poisson deformations
%and $H^2(X',\C)$. Moreover,  there is a universal $\C^\times$-equivariant deformation
%$\mathcal{D}^0_{\param}$ over $\param:=H^2(X',\C)$. The filtered deformation corresponding to $\lambda\in %\param$ is
%the specialization of $\mathcal{D}^0_\param$ to $\lambda$.
\end{Prop}
%We provide a proof for reader's convenience, in particular, recalling the construction of the period map in the proof.
\begin{proof}
The proof is in several steps and is based on \cite{Namikawa_flop}.

{\it Step 1}. Namikawa proved that the Poisson deformation functor of $\tilde{X}$ is pro-representable,
\cite[Theorem 14]{Namikawa_flop} and unobstructed \cite[Corollary 15]{Namikawa_flop}. This means that
there is a universal formal deformation of $\tilde{X}$. This is the direct limit of the schemes
denoted by $X_n^{univ}$ in the discussion before \cite[Lemma 20]{Namikawa_flop}. Since the
Poisson deformation functor is unobstructed, the base of the universal deformation is
the formal neighborhood of $0$ in the space of deformations of $\tilde{X}$ over the dual numbers. By \cite[Proposition 13]{Namikawa_flop}, this space coincides with the similarly defined space for
$\tilde{X}^{reg}$. By \cite[Lemma 12]{Namikawa_flop}, the latter space coincides with $\param=H^2(\tilde{X}^{reg},\C)$. We write $\param^\wedge$ for the formal neighborhood
of $0$ in $\param$ and $\tilde{X}^\wedge_\param$ for the universal formal deformation of
$\tilde{X}$.

{\it Step 2}. By \cite[Lemma 20]{Namikawa_flop}, $\tilde{X}^\wedge_\param$ is a graded formal
deformation over $\param^\wedge$. We claim that it is universal. Let $\widehat{B}$ be the completion
of a positively graded algebra at $0$ and $\tilde{X}_{\widehat{B}}$ be a graded formal deformation
of $\tilde{X}$. By Step 1, we have a unique morphism $\operatorname{Specf}(\widehat{B})\rightarrow
\param^{\wedge}$ and an isomorphism of formal Poisson deformations $\tilde{X}_{\widehat{B}}\xrightarrow{\sim}
\tilde{X}_{\widehat{B}}^1:=\operatorname{Specf}(\widehat{B})\times_{\param^\wedge}\tilde{X}^\wedge_\param$. By the uniqueness of the former, $\operatorname{Specf}(\widehat{B})\rightarrow \param^{\wedge}$ is $\C^\times$-equivariant. So $\tilde{X}_{\widehat{B}}^1$ becomes a graded formal deformation. We need to prove that we can choose
a $\C^\times$-equivariant Poisson isomorphism $\tilde{X}_{\widehat{B}}\xrightarrow{\sim}
\tilde{X}_{\widehat{B}}^1$. The proof is standard but we provide it for reader's convenience.

Let $\mathfrak{m}$ denote the maximal ideal in $\widehat{B}$. Set $B_k:=\widehat{B}/\mathfrak{m}^k$. We write
$\tilde{X}_k,\tilde{X}^1_k$ for the base changes of $\tilde{X}_{\widehat{B}},\tilde{X}_{\widehat{B}}^1$
to $\operatorname{Spec}(B_k)$. Note that for $k=1$, both of these schemes are just $\tilde{X}$.
It is enough to show that, for all $k>0$, any $\C^\times$-equivariant isomorphism of Poisson deformations
$\varphi_k:\tilde{X}_k\xrightarrow{\sim}\tilde{X}_k^1$ can be lifted to a $\C^\times$-equivariant
isomorphism of deformations    $\tilde{X}_{k+1}\xrightarrow{\sim}\tilde{X}_{k+1}^1$.

Note that a Poisson deformation of an arbitrary Poisson variety
$X'$ over an Artinian base $B$ gives a sheaf of Poisson $B$-algebras
on $X'$ that deforms the structure sheaf: we take the structure sheaf of $X'_B$
and sheaf theoretically pull it back to $X'$. We get a deformation of $\Str_{X'}$ over $B$. Conversely, from such a sheaf, say $\Str_{X',B}$, we can get a deformation of $X'$ over $\operatorname{Spec}(B)$: we cover $X'$ with open affine subsets, $X'=\bigcup U_i$,
and glue the affine schemes $\operatorname{Spec}(\Gamma(U_i, \Str_{X',B}))$ along their intersections
$\operatorname{Spec}(\Gamma(U_i\cap U_j, \Str_{X',B}))$. We get a Poisson deformation $X'_B$ over $B$.

 Denote the sheaves on $\tilde{X}$ corresponding to $\tilde{X}_{k+1},\tilde{X}^1_{k+1}$ by $\Str_{k+1},\Str_{k+1}^1$.
We have a $\C^\times$-equivariant isomorphism of sheaves of Poisson algebras $\varphi_k:
\Str_k\xrightarrow{\sim}\Str_{k}^1$. We know that there is a lift to an isomorphism
$\Str_{k+1}\xrightarrow{\sim}\Str_{k+1}^1$. What we need to show that it can be chosen to
be $\C^\times$-equivariant. Let $\mathsf{A}_{\tilde{X}}$ denote the set of lifts. This is an affine space
with associated vector space  $\operatorname{PDer}(\Str_{\tilde{X}})\otimes \mathfrak{m}^k/\mathfrak{m}^{k+1}$, where $\operatorname{PDer}$ stands for the space of Poisson derivations. We need to show that $\C^\times$ has a fixed  point on $\mathsf{A}_{\tilde{X}}$.
Note that any affine space admits a canonical embedding into a vector space equipped with a linear function, say $t$, the image of the embedding is the level set $t=1$. Let $\mathsf{V}_{\tilde{X}}$
be this space for $\mathsf{A}_{\tilde{X}}$. It embeds as a subspace in the space of
$B_{k+1}$-linear homomorphisms of sheaves $\Hom_{B_{k+1}}(\tilde{\Str}_{k+1},\tilde{\Str}_{k+1}^1)$ as follows. For $z\neq 0$, the level set $t=z$ embeds as the subset of all maps $\psi$ such that  $z^{-1}\psi$ is an algebra homomorphism giving $\varphi_k$ after base change to $B_k$. The level set $t=0$ embeds as $\operatorname{PDer}(\Str_{\tilde{X}})\otimes \mathfrak{m}^k/\mathfrak{m}^{k+1}$ (these maps vanish on $\mathfrak{m}\Str_{k+1}$ and have image in
$\mathfrak{m}^k \Str^1_{k+1}$). We need to show that the action of $\C^\times$ on $\mathsf{V}_{\tilde{X}}$
is rational (i.e. comes from a grading), the existence of a fixed point in $\mathsf{A}_{\tilde{X}}$ will follow.

By a result of Sumihiro, \cite{Sumihiro}, we can cover $\tilde{X}$ with open affine $\C^\times$-stable subsets $U_i, i=1,\ldots,m$. Restricting sheaves $\Str_{k+1},\Str_{k+1}^1$ to $U_i$ we get graded Poisson flat $B_{k+1}$-algebras $\Str_{k+1}(U_i), \Str^1_{k+1}(U_i)$. We can consider the subspaces
$\mathsf{V}_{U_i}\subset \Hom_{B_{k+1}}(\Str_{k+1}(U_i), \Str^1_{k+1}(U_i))$ defined similarly to
$\mathsf{V}_{\tilde{X}}$. We have a $\C^\times$-equivariant embedding
$\mathsf{V}_{\tilde{X}}\hookrightarrow \bigoplus_i\mathsf{V}_{U_i}$ by restriction. So it is enough to show that
the $\C^\times$-action on $\mathsf{V}_{U_i}$ is rational.

The algebra $\Str_{k+1}(U_i)$ is generated by homogeneous elements, say $f_1,\ldots,f_\ell$,
of degrees $d_1,\ldots,d_\ell$. Note that every element of $\mathsf{V}_{U_i}\subset
\Hom_{B_{k+1}}(\Str_{k+1}(U_i), \Str^1_{k+1}(U_i))$ is uniquely determined by the images of
$f_1,\ldots,f_\ell$. This gives a $\C^\times$-equivariant embedding
$\mathsf{V}_{U_i}\hookrightarrow \bigoplus_{j=1}^\ell\Str^1_{k+1}(U_i)\langle -d_j\rangle$,
where in the brackets we have a grading shift. The action of $\C^\times$ on the target of
this embedding is manifestly rational. This finishes the proof of the claim that $\tilde{X}^{\wedge}_\param$ is a universal graded formal deformation.

{\it Step 3}. By \cite[Lemma 22]{Namikawa_flop}, the formal scheme $\tilde{X}^\wedge_{\param}$
over $\param^\wedge$ with an action of $\C^\times$ admits the algebraization: a graded
Poisson deformation $\tilde{X}_{\param}$ over $\param$ such that the formal neighborhood
of $\tilde{X}$ in $\tilde{X}_\param$ is identified with  $\tilde{X}^\wedge_{\param}$.
We claim that $\tilde{X}_\param$ has the required universal property.

Indeed, let $\tilde{X}_B$ be another graded deformation. Then we have the graded formal deformation
$\tilde{X}_B^\wedge$ over the completion $\widehat{B}$ of $B$. By Step 2, we have a unique $\C^\times$-equivariant morphism of formal
schemes $\operatorname{Specf}(\widehat{B})\rightarrow \param^\wedge$ and an isomorphism of graded formal
deformations
\begin{equation}\label{eq:formal_scheme_iso}
 \operatorname{Specf}(\widehat{B})\times_{\param^\wedge}\tilde{X}^\wedge_{\param}
\xrightarrow{\sim} \tilde{X}_B^\wedge.
\end{equation}
The morphism $\operatorname{Specf}(\widehat{B})\rightarrow \param^\wedge$ is induced by
the unique $\C^\times$-equivariant morphism $\operatorname{Spec}(B)\rightarrow \param$.
The source of (\ref{eq:formal_scheme_iso}) is the formal neighborhood of $\tilde{X}$
in $\operatorname{Spec}(B)\times_{\param}\tilde{X}_\param$.  The proof of \cite[Proposition A.5]{Namikawa_flop} shows that the algebraizations  $\operatorname{Spec}(B)\times_{\param}\tilde{X}_\param, \tilde{X}_B$ are uniquely recovered
from their formal neighborhoods of $\tilde{X}$ together with the $\C^\times$-action.
This shows that (\ref{eq:formal_scheme_iso}) comes from a unique isomorphism of graded Poisson deformations $\operatorname{Spec}(B)\times_{\param}\tilde{X}_\param\xrightarrow{\sim} \tilde{X}_B$.
\end{proof}

\begin{Rem}\label{Rem:Namikawa_KV}
In this remark we relate results of \cite{Namikawa_flop} to those of \cite{KV}.
Kaledin and Verbitsky, \cite[Theorem 3.6]{KV}, show that for a \underline{smooth} symplectic
variety $X'$ with $H^i(X',\Str_{X'})=0$ for $i=1,2$, there is  a universal
formal deformation with base being the formal neighborhood of $0$ in $H^2(X',\C)$.
The vanishing condition above holds for $X'=\tilde{X}^{reg}$
thanks to Proposition \ref{Prop:coh_vanish}.

Using the construction relating Poisson deformations over Artinian bases to sheaves (Step 2
of the proof of Proposition \ref{Prop:KV_deform}) we can relate the Poisson deformations of $\tilde{X}$ to those $\tilde{X}^{reg}$.
Namely, given  a deformation $\tilde{X}_B$ over an Artinian base $\operatorname{Spec}(B)$, we form
the corresponding sheaf $\Str_{\tilde{X},B}$, restrict it to $\tilde{X}^{reg}$ via the sheaf-theoretic
pullback and take the corresponding deformation of $\tilde{X}^{reg}$. To get back we apply the pushforward
for $\iota: \tilde{X}^{reg}\rightarrow \tilde{X}$. As we have pointed out in the proof of
Proposition \ref{Prop:coh_vanish}, $R^1\iota_* \Str_{\tilde{X}^{reg}}=0$. Any deformation of $\mathcal{O}_{\tilde{X}^{reg}}$ over $B$ has a filtration with successive quotients
$\Str_{\tilde{X}^{reg}}$. An easy argument using
the long exact sequence in cohomology shows that $\iota_*$ sends any deformation of $\Str_{\tilde{X}^{reg}}$
to a deformation of $\Str_{\tilde{X}}$ over $B$. So we get a bijection between the deformations of
$\Str_{\tilde{X}^{reg}},\Str_{\tilde{X}}$ over $B$.

It follows that the Poisson deformation functors for $\tilde{X}$ and $\tilde{X}^{reg}$ are isomorphic. In particular, the existence of the universal formal deformation for $\tilde{X}$ implies that for
$\tilde{X}^{reg}$ and vice versa.

We also note that in the setting of \cite{KV} we can talk about graded formal deformations
of $X'$ if it comes with a $\C^\times$-action that rescales the Poisson bracket by $t\mapsto t^{-d}$.
Similarly to Step 2 of Proposition \ref{Prop:KV_deform}, the universal deformation over
the formal neighborhood of $0$ in $H^2(X',\C)$ is a universal graded formal deformation.
\end{Rem}

\subsection{Deformations from $\Q$-terminalizations}\label{SS_deform_Q_fact}
Let $A$ be a positively graded Poisson algebra with $\deg \{\cdot,\cdot\}=-d$.
For example, we can take $A=\C[X]$, where $X$ is a conical symplectic singularity.

By a {\it filtered
Poisson deformation} of $A$ we mean a pair $(\mathcal{A}^0,\iota)$, where
\begin{itemize}
\item $\mathcal{A}^0$ is a $\Z_{\geqslant 0}$-filtered Poisson
algebra, $\mathcal{A}^0=\bigcup_{i\in \Z}\mathcal{A}^0_{\leqslant i}$,
such that $\{\mathcal{A}^0_{\leqslant i},\mathcal{A}^0_{\leqslant j}\}
\subset \mathcal{A}^0_{\leqslant i+j-d}$,
\item
and $\iota$ is an isomorphism $\gr\mathcal{A}^0\xrightarrow{\sim} A$
of graded Poisson algebras.
\end{itemize}

We note that to give a filtered Poisson deformation of $A$ is the same as to give a graded Poisson deformation of $\operatorname{Spec}(A)$ over $\Spec(\C[\hbar])$, where $\hbar$ has degree $1$.
Indeed, starting with $\mathcal{A}^0$ we can form its Rees algebra $R_\hbar(\mathcal{A}^0)$ and set
$X_\hbar:=\Spec(R_\hbar(\mathcal{A}^0))$. The isomorphism $\iota$ gives rise to an isomorphism
$\operatorname{pt}\times_{\Spec(\C[\hbar])}X_\hbar\xrightarrow{\sim} \Spec(A)$ that makes
$X_\hbar$ into a graded Poisson deformation of $\Spec(A)$. Conversely, starting with
a graded Poisson deformation $X_\hbar$ we set $\mathcal{A}^0:=\C[X_\hbar]/(\hbar-1)$ getting
a filtered Poisson deformation of $A$.

We proceed to a construction of filtered Poisson deformations of $\C[X]$. Let $\tilde{X}$
stand for a $\Q$-factorial terminalization of $X$.
We set $\param:=H^2(\tilde{X}^{reg},\C)$ and let $\tilde{X}_{\param}$ be the universal
graded deformation of $\tilde{X}$, see Proposition \ref{Prop:KV_deform}.
Our primary goal in this section is to construct filtered Poisson deformations of
$\C[X]$ out of $\tilde{X}_\param$.

First, we are going to describe the structure of the space $\param$ following \cite{Namikawa1}
and \cite{Namikawa2}.

Kaledin, \cite[Theorem 2.3]{Kaledin_symplectic}, proved that $X$ has finitely many
symplectic leaves. Moreover, he proved that for any leaf $\mathcal{L}\subset X$, there
is a formal slice to $\mathcal{L}$ in $X$: if we pick a point $x\in \mathcal{L}$ and let
$X^{\wedge_x},\mathcal{L}^{\wedge_x}$ denote the formal neighborhoods of $x$ in $X$
and $\mathcal{L}$, respectively, then there is an affine formal Poisson scheme $\Sigma'$
with $X^{\wedge_x}\cong \mathcal{L}^{\wedge_x}\times \Sigma'$.

Let $\mathcal{L}_1,\ldots, \mathcal{L}_k$ be the codimension $2$ symplectic leaves
of $X$. The formal slice $\Sigma'_i$ to $\mathcal{L}_i$ in $X$ is an ADE type Kleinian
singularity $(\C^2)^{\wedge_0}/\Gamma_i$ (here $\bullet^{\wedge_0}$ stands for the
formal neighborhood at $0$) so we can consider the Weyl group $\widehat{W}_i,
i=1,\ldots,k$ of that ADE type. We write $\Sigma_i$ for $\C^2/\Gamma_i$, $\tilde{\Sigma}_i$
for its minimal resolution and set $\widehat{\param}_i:=H^2(\tilde{\Sigma}_i,\C)$.
The space $\widehat{\param}_i$ is the Cartan space for $\widehat{W}_i$.

Choose a point $x\in \mathcal{L}_i$. The preimage of $x$ in $\tilde{X}$ is the same as
the preimage of $0\in \Sigma_i$ in $\tilde{\Sigma}_i$, see the discussion in
the beginning of \cite[Section 1]{Namikawa2}. So this preimage is the union of $\mathbb{P}^1$'s
intersecting according to the Dynkin diagram of the root system associated to $\widehat{W}_i$.
The fundamental group $\pi_1(\mathcal{L}_i)(=\pi_1(\mathcal{L}_i,x))$ acts on the preimage
via the monodromy action.
Hence it acts on
$\widehat{\param}_i,\widehat{W}_i$ by diagram
automorphisms. Let $\param_i\subset \widehat{\param}_i, W_i\subset \widehat{W}_i$ denote the fixed points.
Following Namikawa, see the discussion in the beginning of \cite[Section 1]{Namikawa2}, define the Namikawa-Weyl group as $W(=W_X):=\prod_{i=1}^k W_i$.

\begin{Lem}\label{Lem:Cartan_compute}
We have $\param=H^2(X^{reg},\C)\oplus \bigoplus_{i=1}^k \param_i$.
\end{Lem}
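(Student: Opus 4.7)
The plan is to identify both $\param$ and $H^2(X^{reg},\C)\oplus\bigoplus_i\param_i$ with $H^2$ of a common open subvariety of $\tilde{X}^{reg}$, then compare via a Gysin sequence. Since $\tilde{X}$ has terminal singularities its singular locus has codimension $\geq 4$, and combined with rationality of symplectic singularities this gives $\param=H^2(\tilde{X},\C)\cong H^2(\tilde{X}^{reg},\C)$, so I may work with the smooth variety $\tilde{X}^{reg}$. Now set $X^0:=X^{reg}\sqcup\bigsqcup_{i=1}^k \mathcal{L}_i$ (whose complement in $X$ has codimension $\geq 4$) and $\tilde{X}^0:=\rho^{-1}(X^0)\cap\tilde{X}^{reg}$. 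A standard fact going back to Kaledin is that partial crepant birational morphisms between varieties with symplectic singularities are semi-small, so $\rho^{-1}(X\setminus X^0)$ has codimension $\geq 2$ in $\tilde{X}$; hence $\tilde{X}^{reg}\setminus\tilde{X}^0$ has complex codimension $\geq 2$ in the smooth $\tilde{X}^{reg}$, and local-cohomology vanishing gives $H^2(\tilde{X}^{reg},\C)\cong H^2(\tilde{X}^0,\C)$.

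Next I analyze $\tilde{X}^0$ along the fibres of $\rho\colon\tilde{X}^0\to X^0$. Over $X^{reg}$ crepancy forces $\rho$ to be an isomorphism, and so $X^{reg}$ sits in $\tilde{X}^0$ as an open subvariety. Over $\mathcal{L}_i$ the restriction $\tilde{X}_i:=\rho^{-1}(\mathcal{L}_i)\cap\tilde{X}^0\to\mathcal{L}_i$ is analytically locally trivial, with fibre the exceptional locus of the minimal resolution $\tilde{\Sigma}_i$ of $\Sigma_i=\C^2/\Gamma_i$: the transverse slice is fully resolved because terminal singularities in dimension $2$ are smooth. The monodromy $\pi_1(\mathcal{L}_i)$ acts on this fibre by diagram automorphisms of the Dynkin graph of $\Gamma_i$, and the irreducible components of $\tilde{X}_i$ correspond bijectively to $\pi_1(\mathcal{L}_i)$-orbits on the vertex set; the cardinality of that set equals $\dim\param_i$.

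Finally, the Gysin sequence for $X^{reg}\hookrightarrow\tilde{X}^0$ with closed divisorial complement $\bigsqcup_{i,j}D_{i,j}$ (the $D_{i,j}$ running over the irreducible components of the $\tilde{X}_i$) reads
\[
\bigoplus_{i,j}\C\cdot[D_{i,j}]\xrightarrow{\alpha}H^2(\tilde{X}^0,\C)\xrightarrow{\beta}H^2(X^{reg},\C)\to\bigoplus_{i,j}H^1(D_{i,j}).
\]
Injectivity of $\alpha$ holds because restricting $[D_{i,j}]$ to the fibre of $\rho$ over a generic point of $\mathcal{L}_i$ yields the fundamental class of a distinct irreducible component of the exceptional divisor of $\tilde{\Sigma}_i$, and these are linearly independent in $H^2(\tilde{\Sigma}_i,\C)$. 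Hence $\ker\beta$ is canonically $\bigoplus_i\param_i$. Surjectivity of $\beta$, which yields the asserted splitting, follows from Corollary \ref{Cor:Q-deform}: every class in $H^2(X^{reg},\C)$ is the period of a filtered Poisson deformation of $X^{reg}$, and such a deformation extends to one of $\tilde{X}^0$ with the same period, realising the given class in the image of $\beta$. The main obstacle in this plan is the geometric input in the second paragraph: identifying the fibre of $\rho$ over a generic point of $\mathcal{L}_i$ with the \emph{full} exceptional locus of $\tilde{\Sigma}_i$ in a $\pi_1(\mathcal{L}_i)$-equivariant way, so that one lands on the correct count $\dim\param_i$ of orbits. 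The rest is formal manipulation of Gysin sequences and local-cohomology vanishing.
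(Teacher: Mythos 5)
Your proposal takes a genuinely different route from the paper, and unfortunately it has real gaps. The paper's proof is much shorter and works entirely on $X$ itself: it quotes \cite[Proposition 4.2]{Namikawa1}, which already gives the decomposition $\param = H^2(X^1,\C)\oplus\bigoplus_i\param_i$ with $X^1 = X^{reg}\sqcup\bigsqcup_i\mathcal{L}_i$, and then reduces the Lemma to showing $H^2(X^1,\C)=H^2(X^{reg},\C)$. That last step is a pure Mayer--Vietoris computation on the cover $X^1 = X^{reg}\cup\bigsqcup_i Y_i$, where $Y_i$ is a tubular neighborhood of $\mathcal{L}_i$: the intersections $Y_i^\times$ are $\mathbb{S}^3/\Gamma_i$-bundles over $\mathcal{L}_i$, and $H^{1}(\mathbb{S}^3/\Gamma_i,\C)=H^{2}(\mathbb{S}^3/\Gamma_i,\C)=0$ collapses the sequence. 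You instead try to reprove Namikawa's decomposition from scratch on the terminalization $\tilde{X}^{reg}$ via a Gysin sequence, which is more ambitious but where the problems occur.

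Two specific gaps in your argument. First, the Gysin sequence
$\bigoplus_{i,j}\C[D_{i,j}]\to H^2(\tilde{X}^0)\to H^2(X^{reg})\to\bigoplus_{i,j}H^1(D_{i,j})$
you write is the Gysin sequence for the complement of a \emph{disjoint} union of smooth divisors. But within each $\tilde{X}_i$ the divisors $D_{i,j}$ are not disjoint: they meet along the loci over $\mathcal{L}_i$ corresponding to the intersection points of the exceptional $\mathbb{P}^1$'s in the minimal resolution $\tilde{\Sigma}_i$ (the edges of the ADE Dynkin diagram). So $\tilde{X}^0\setminus X^{reg}$ is a normal-crossings divisor, not a smooth one, and you would need a Leray/Mayer--Vietoris spectral sequence argument keeping track of the intersections rather than the short Gysin sequence. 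Second, your justification of surjectivity of $\beta$ is circular: to realize a class $\eta\in H^2(X^{reg},\C)$ in the image, you say a deformation of $X^{reg}$ with period $\eta$ ``extends to one of $\tilde{X}^0$ with the same period,'' but whether such a deformation extends across the exceptional divisors of $\tilde{X}^0$ is exactly controlled by whether $\eta$ lifts under $\beta$ — i.e., by the statement you are trying to prove. Surjectivity must come from vanishing of the next term in the topological long exact sequence, which in the paper's version is where the vanishing of $H^{1,2}(\mathbb{S}^3/\Gamma_i,\C)$ enters. (A smaller point: your claim that the generic transverse slice in $\tilde X$ is the full minimal resolution is correct, but ``terminal $2$-dimensional singularities are smooth'' only shows the slice is smooth; one also needs crepancy — automatic in the symplectic/Poisson setting — to pin it down as the minimal resolution rather than some further blow-up.)
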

\begin{proof}
Let $X^1$ denote the complement in $X$ to the union of all symplectic leaves with codimension $4$
or higher. So $X^1=X^{reg}\sqcup \bigsqcup_{i=1}^k \mathcal{L}_i$. According to
\cite[Proposition 4.2]{Namikawa1}, $\param=H^2(X^1,\C)\oplus \bigoplus_{i=1}^k \param_i$. What we need to do therefore is to show that $H^2(X^1,\C)=H^2(X^{reg},\C)$.

Pick a point $x\in \mathcal{L}_i$. Let $\Sigma\subset X$ be a slice to $\mathcal{L}_i$ through $x$
in the complex analytic category. Using Hamiltonian flows for suitable analytic functions in a neighborhood of $x$ (and shrinking $\Sigma$ if necessary) we can identify a complex analytic neighborhood of $x$  with $B\times \Sigma$, where $B$ is a neighborhood of $x$ in $\mathcal{L}_i$. Since $X$
is normal, $\Sigma$ is also normal as a complex analytic space. It is 2-dimensional so it must be
a Kleinian singularity of the same type as $\Sigma_i$.

Pick tubular neighborhoods $Y_1,\ldots,Y_k$
of $\mathcal{L}_1,\ldots,\mathcal{L}_k$ and set $Y_i^\times:=Y_i\setminus \mathcal{L}_i$.
So, by the previous paragraph, topologically  $Y_i$ is a locally trivial fibration over $\mathcal{L}_i$
with fiber $D/\Gamma_i$, where $D$ is an open unit ball in $\C^2$.
In particular, $H^j(Y_i, \C)=H^j(\mathcal{L}_i,\C)$ for all $j$. Also note that $\Gamma_i$
acts freely on $D^\times$. It follows that $H^j(D^\times/\Gamma_i, \C)=H^j(\mathbb{S}^3,\C)^{\Gamma_i}$,
in particular, this homology group is zero when $j=1,2$. It follows that
$H^j(Y_i^\times, \C)=H^j(\mathcal{L}_i,\C)$ for  $j=1,2$. Hence the pullback
homomorphism $H^j(Y_i,\C)\rightarrow H^j(Y_i^\times,\C)$
is an isomorphism for $j=1,2$.  The Mayer-Vietoris
exact sequence for the covering $X^1=X^{reg}\cup (\bigsqcup Y_i)$ gives
\begin{align*}
&H^1(X^{reg},\C)\oplus\bigoplus_{i=1}^k H^1(Y_i,\C)\rightarrow
\bigoplus_{i=1}^k H^1(Y_i^\times,\C)\rightarrow H^2(X^1,\C)\\&
\rightarrow H^2(X^{reg},\C)\oplus\bigoplus_{i=1}^k H^2(Y_i,\C)\rightarrow
\bigoplus_{i=1}^k H^2(Y_i^\times,\C).
\end{align*}
By the argument above in this paragraph the first and the last map restrict to
isomorphisms $\bigoplus_{i=1}^k H^j(Y_i,\C)\rightarrow
\bigoplus_{i=1}^j H^1(Y_i^\times,\C)$.
It follows that the
pull-back map $H^2(X^1,\C)\rightarrow H^2(X^{reg},\C)$ is an isomorphism.
\end{proof}

Let us proceed to producing filtered Poisson deformations of $\C[X]$
from $\tilde{X}_\param$.

\begin{Prop}\label{Prop:deform_X}
The following is true:
\begin{enumerate}
\item The algebra $\C[\tilde{X}_{\param}]$ is finitely generated and free as a module
over $\C[\param]$. Moreover, we have $\C[\tilde{X}_{\param}]/(\param)=\C[X]$.
\item The group $W$ acts on $\C[\tilde{X}_{\param}]$ by graded Poisson algebra
automorphisms and the action is compatible with that on $\param$. The induced
action on $\C[X]$ is trivial.
\end{enumerate}
\end{Prop}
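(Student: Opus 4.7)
For part (1), the strategy is cohomological, transferring vanishing from the central fiber $\tilde{X}$ to the whole family via the $\C^\times$-contraction on $\param$. First I would establish $\rho_*\mathcal{O}_{\tilde{X}}=\mathcal{O}_X$ (Zariski's main theorem, since $\rho$ is birational and $X$ is normal) and $R^i\rho_*\mathcal{O}_{\tilde{X}}=0$ for $i>0$ (rationality of the symplectic singularity $X$, obtained by dominating $\tilde{X}$ with a true resolution and factoring). Together with affineness of $X$, this gives $\C[\tilde{X}]=\C[X]$ and $H^i(\tilde{X},\mathcal{O}_{\tilde{X}})=0$ for $i>0$. Next I would decompose the structure sheaf of the universal family $\pi:\tilde{X}_{\param}\to\param$ into $\C^\times$-weight pieces; each weight piece $(\pi_*\mathcal{O}_{\tilde{X}_{\param}})_n$ is coherent on $\param$ since its fiber at $0\in\param$ is the finite-dimensional space $\C[X]_n$. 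Cohomology and base change applied weight-by-weight then yields $H^i(\tilde{X}_\lambda,\mathcal{O}_{\tilde{X}_\lambda})_n=0$ for $i>0$ and all $\lambda$, so each $(\pi_*\mathcal{O}_{\tilde{X}_{\param}})_n$ is locally free of rank $\dim\C[X]_n$. Assembling the weight pieces yields freeness of $\C[\tilde{X}_{\param}]$ as a graded $\C[\param]$-module together with the identification $\C[\tilde{X}_{\param}]/(\param)=\C[X]$. Finite generation of $\C[\tilde{X}_{\param}]$ as a $\C[\param]$-algebra then follows by lifting a finite homogeneous set of $\C$-algebra generators of $\C[X]$ to homogeneous elements of $\C[\tilde{X}_{\param}]$ and invoking graded Nakayama against the augmentation ideal of $\C[\param]$.

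For part (2), the $W$-action is essentially the one constructed by Namikawa in \cite{Namikawa2}. Locally near each codimension-$2$ symplectic leaf $\mathcal{L}_i$, the formal transverse slice is the Kleinian singularity $(\C^2)^{\wedge_0}/\Gamma_i$, whose versal Poisson deformation over $\widehat{\param}_i$ carries a natural action of the full Weyl group $\widehat{W}_i$; restricting to the $\pi_1(\mathcal{L}_i)$-invariant subspace $\param_i\subset\widehat{\param}_i$ descends this to a $W_i$-action on the restricted slice family. I would then globalize using the universality of filtered Poisson deformations on $\tilde{X}^{reg}$ (Proposition \ref{Prop:KV_deform}) together with Lemma \ref{Lem:Cartan_compute}: the local $W_i$-action combined with universality forces an action on $(\tilde{X}^{reg})_{\param}$ covering the reflection action on $\param_i\subset\param$, and this extends to $\tilde{X}_{\param}$ because $\tilde{X}\setminus\tilde{X}^{reg}$ has codimension $\geq 4$ and the structure sheaf on the family is recovered by pushforward from the regular locus. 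The distinct $W_i$'s act independently on local pieces of $\param$, producing the product action of $W=\prod W_i$ by graded Poisson automorphisms compatible with $\param$; taking global sections gives the asserted action on $\C[\tilde{X}_{\param}]$.

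The principal obstacle lies in part (2): checking that the local Kleinian-type $W_i$-actions coherently glue to a global $W$-action on the universal deformation, compatibly with the grading and with the reflection action on $\param$. This is exactly the content of Namikawa's period-map analysis, and the argument rests on carefully invoking his results rather than redoing them from scratch.
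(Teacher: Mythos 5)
Your treatment of part (1) takes essentially the same route the paper intends: the paper simply asserts that (1) ``follows from $\C[X]=\C[\tilde{X}]$ and $H^i(\tilde{X},\mathcal{O}_{\tilde{X}})=0$ for $i=1,2$,'' and your proposal fills in exactly the standard cohomology-and-base-change argument, weight by weight along the contracting $\C^\times$-action on $\param$, that makes this precise. One small overstatement: the paper's Proposition \ref{Prop:coh_vanish} only provides vanishing for $i=1,2$, while you claim vanishing in all positive degrees via rationality. Your Leray-factorization argument through a full resolution dominating $\tilde{X}$ does give the stronger statement (since both $X$ and $\tilde{X}$ have rational singularities), so this is correct, just not needed: only $H^1=0$ enters the base-change step, and $H^2=0$ is used elsewhere for the deformation classification.

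For part (2) the paper offers no proof and simply attributes the result to Namikawa \cite{Namikawa2}; your paragraph is an honest sketch of what Namikawa actually does. One place where you should be more careful is the phrase ``and this extends to $\tilde{X}_\param$.'' The $W$-action does not in general act on the scheme $\tilde{X}_\param$: different $\Q$-factorial terminalizations of $X$ are related by flops, and an element $w\in W$ may send the terminalization over one side of a wall in $\param$ to a genuinely different one. What Namikawa establishes is an action of $W$ on the \emph{affinization} $X_\param=\operatorname{Spec}(\C[\tilde{X}_\param])$, equivalently on the algebra $\C[\tilde{X}_\param]$, compatible with the reflection action on $\param$; and that is all the proposition asserts. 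Since you already note that the structure sheaf of the family is recovered by pushforward from $\tilde{X}^{reg}_\param$, phrasing the conclusion in terms of global sections rather than the terminalization scheme would make the claim accurate without needing any additional argument.
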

\begin{proof}
%Part (1) of the  proposition follows from $\C[X]=\C[\tilde{X}]$
%and $H^i(\tilde{X},\mathcal{O}_{\tilde{X}})=0$ for $i=1,2$.
%
%Part (2) is essentially due to Namikawa, \cite{Namikawa2}, let us provide a proof for reader's convenience.
Let $\mathcal{X}$ be the flat deformation of $X$ over $\param/W$ established in
\cite{Namikawa1}, before Theorem 5.5 there. Combining \cite[(14)]{Namikawa1} with
\cite[Theorem 1.1]{Namikawa2} we get the following commutative diagram

\begin{picture}(90,30)
\put(2,22){$\tilde{X}_\param$}
\put(42,22){$\mathcal{X}$}
\put(4,2){$\param$}
\put(40,2){$\param/W$}
\put(5,20){\vector(0,-1){14}}
\put(43,20){\vector(0,-1){14}}
\put(10,23){\vector(1,0){28}}
\put(10,3){\vector(1,0){28}}
\end{picture}

Consider the induced morphism
\begin{equation}\label{eq:morphism_families}
\tilde{X}_\param\rightarrow \param\times_{\param/W}\mathcal{X}.
\end{equation}
It is projective by the construction.
We claim that
\begin{itemize}
\item[(*)]
the target is normal and (\ref{eq:morphism_families}) is birational.
\end{itemize}
This claim implies that $\C[\tilde{X}_\param]\xrightarrow{\sim} \C[\param\times_{\param/W}\mathcal{X}]$.
This, in turn, implies (2). Also note that $\mathcal{X}$ is flat over $\param/W$ by the construction
in \cite[(5.4)]{Namikawa1} and carries a $\C^\times$-action compatible with that on $\param/W$ because $\mathcal{X}$ is obtained by  algebraization of a formal deformation. So $\C[\mathcal{X}]$
is a flat graded module over $\C[\param]^W$. Hence it is free. So (*) implies (1) as well.

Let us prove that $\param\times_{\param/W}\mathcal{X}$ is normal.
As we have seen in the proof of Proposition \ref{Prop:coh_vanish}, the variety $X$ is Cohen-Macaulay.
Since $\param\times_{\param/W}\mathcal{X}$ is flat over $\param$, deforms $X$ and comes with
a contracting $\C^\times$-action, the variety $\mathcal{X}$ is also Cohen-Macaulay. To show that it
is normal we need to check that it is smooth outside of a  codimension $2$ locus.
Equivalently, we need to show that the morphism  $\param\times_{\param/W}\mathcal{X}\rightarrow \param$
is smooth outside of a codimension $2$ locus. The variety $X$ is normal hence smooth outside
of a codimension $2$ locus. Thanks to the contracting $\C^\times$-action the same is true for the
other fibers of  $\param\times_{\param/W}\mathcal{X}\rightarrow \param$. So the variety
$\param\times_{\param/W}\mathcal{X}$ is smooth outside of a codim $2$ locus. Hence it is normal.

Now we prove that (\ref{eq:morphism_families}) is birational.
For a Zariski generic point $\lambda\in \param$ the induced morphism of fibers $\tilde{X}_\lambda\rightarrow
\mathcal{X}_{W\lambda}$ is an isomorphism, see \cite[Theorem 5.5,(c)]{Namikawa1}.
This shows (\ref{eq:morphism_families}) is birational and completes the proof.
\end{proof}

Let us write $X_{\param}$ for $\operatorname{Spec}(\C[\tilde{X}_{\param}])$
(i.e., $\param\times_{\param/W}\mathcal{X}$ from the previous proof)
and $X_\lambda$ for the fiber of $X_{\param}\rightarrow \param$ over $\lambda$.

Now let us examine the situation when $X_\lambda,X_{\lambda'}$ give isomorphic filtered Poisson
deformations.

\begin{Prop}\label{Prop:deform_iso}
We have $\C[X_\lambda]\cong \C[X_{\lambda'}]$ as filtered Poisson deformations
of $\C[X]$ if and only if $\lambda'\in W\lambda$.
\end{Prop}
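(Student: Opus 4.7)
The proof naturally splits into the two directions. The "if" direction is immediate from Proposition~\ref{Prop:deform_X}(2): for any $w\in W$, the $W$-action on $\C[\tilde{X}_{\param}]$ is by graded Poisson automorphisms compatibly with its action on $\param$, so it sends $\mathfrak{m}_\lambda\C[\tilde{X}_{\param}]$ to $\mathfrak{m}_{w\lambda}\C[\tilde{X}_{\param}]$ (where $\mathfrak{m}_\lambda\subset\C[\param]$ denotes the maximal ideal at $\lambda$) and descends to a filtered Poisson isomorphism $\C[X_\lambda]\xrightarrow{\sim}\C[X_{w\lambda}]$. Since $W$ fixes $0\in\param$, the induced map on associated gradeds is the identity of $\C[X]$, so we indeed obtain an isomorphism of filtered Poisson \emph{deformations}.

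For the "only if" direction, the strategy is to identify the family $\{\C[X_\lambda]\}_{\lambda\in\param}$ with (the pullback along $\param\to\param/W$ of) the universal filtered Poisson deformation of $\C[X]$ constructed by Namikawa in~\cite{Namikawa2}, whose main theorem is recalled in Section~\ref{SS_univ_deform}. Granting this identification, the statement reduces to the universal property: two fibers are isomorphic as filtered Poisson deformations if and only if they lie over the same point of $\param/W$, which is precisely the condition $\lambda'\in W\lambda$. To carry out the identification, one uses Corollary~\ref{Cor:Q-deform} (the universality of $\tilde{X}_{\param}$ as a filtered Poisson deformation of $\tilde{X}$) together with the fact that $\rho_\ast\mathcal{O}_{\tilde{X}_{\param}}$ defines a flat deformation of $\C[X]$ over $\C[\param]$, which is the content of Proposition~\ref{Prop:deform_X}(1) and itself relies on $H^i(\tilde{X},\mathcal{O}_{\tilde{X}})=0$ for $i=1,2$ (Proposition~\ref{Prop:coh_vanish}).

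The main technical obstacle is the identification of the group of deformation symmetries of $\C[\tilde{X}_{\param}]$ acting nontrivially on $\param$ but trivially on $\C[X]$ with the Namikawa-Weyl group $W$. This is established locally on $X$: at each codimension-$2$ symplectic leaf $\mathcal{L}_i$, the formal slice is the Kleinian singularity $\C^2/\Gamma_i$ and the universal Poisson deformation is governed by the ADE Weyl group $\widehat{W}_i$, whose $\pi_1(\mathcal{L}_i)$-invariants yield the factor $W_i$ as in Lemma~\ref{Lem:Cartan_compute}. Globalizing these local models across all leaves produces the full action of $W=\prod_i W_i$, and any symmetry of $\C[\tilde{X}_{\param}]$ inducing the identity on the special fiber must restrict on each local model to an element of $W_i$, hence lies in $W$. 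Once this is in hand, an isomorphism $\phi:\C[X_\lambda]\xrightarrow{\sim}\C[X_{\lambda'}]$ of filtered Poisson deformations lifts to an automorphism of $\C[\tilde{X}_{\param}]$ covering an element of $W$ that sends $\lambda$ to $\lambda'$, and we conclude $\lambda'\in W\lambda$.
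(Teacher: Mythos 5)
Your ``if'' direction is fine and matches the paper's implicit argument via Proposition~\ref{Prop:deform_X}(2). The ``only if'' direction, however, has a structural problem and some genuine gaps.

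The structural problem is circularity. You reduce the claim to the universal property of the Namikawa deformation recalled in Section~\ref{SS_univ_deform}, i.e.\ to Corollary~\ref{Cor:X_univ_deform}. But in this paper that corollary is \emph{derived} from Proposition~\ref{Prop:deform_iso}: its proof produces a finite surjection $\param/W\to\operatorname{Spec}(B)$ from Namikawa's Theorem 5.5 of~\cite{Namikawa1} and then invokes Proposition~\ref{Prop:deform_iso} to establish injectivity. So using Corollary~\ref{Cor:X_univ_deform} here is circular within the paper's logic. (One could instead cite the external theorem of~\cite{Namikawa2} directly, but that defeats the purpose; the paper explicitly chooses to give a proof because the same technique is reused later.)

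Your fallback sketch --- localize at codimension-$2$ leaves, recognize Kleinian singularities, use $\widehat W_i$ and its $\pi_1(\mathcal L_i)$-invariants --- is indeed the right intuition and is what the paper does, but several steps are missing or would need to be justified. First, you never treat the $H^2(X^{reg},\C)$ component $\lambda_0$ of the parameter; the paper disposes of it separately by naturality of the period map. Second, your key assertion that an isomorphism $\phi:\C[X_\lambda]\xrightarrow{\sim}\C[X_{\lambda'}]$ ``lifts to an automorphism of $\C[\tilde X_\param]$'' is not automatic and is not how the paper proceeds; instead, the paper passes to Rees algebras, completes at a point $x\in\mathcal L_i$, splits off the Weyl-algebra factor to obtain a formal Poisson deformation of $\C[\Sigma_i]^{\wedge_0}$, and then decompletes (citing the argument of~\cite[Proposition 6.6.1]{quant_iso}) to get an isomorphism of filtered deformations of $\C[\Sigma_i]$. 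Without such an argument, the inference from a single filtered isomorphism of fibers to a local statement at each leaf is unjustified. Third, you skip the nontrivial final step: the local Kleinian argument only gives $\widehat W_i$-conjugacy of $\lambda_i,\lambda_i'$, and one still has to show that $\widehat W_i$-conjugate elements of $\widehat\param_i^{\Xi_i}$ are in fact $\widehat W_i^{\Xi_i}=W_i$-conjugate; the paper proves this via a fundamental-chamber argument (Step 4). As written, your proposal would only yield $\lambda_i'\in\widehat W_i\lambda_i$, which is weaker than what is claimed.
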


This follows from results of \cite{Namikawa2} (but also can be proved independently). We postpone the proof until the next section. Note also that our present setting is different from that of
Proposition \ref{Prop:KV_deform}: there we considered
varieties that are $\Q$-factorial terminal and Poisson deformations
were parameterized by $H^2(?,\C)$. Here $X$ is, generally, not of this form.

\begin{Rem}\label{Rem:param}
Recall the decomposition $\param= H^2(X^{reg},\C)\oplus \bigoplus_{i=1}^k \param_i$.
We can write a deformation parameter $\lambda$
as $(\lambda_0,\lambda_1,\ldots,\lambda_k)$ according to this decomposition. Then $\lambda_0\in H^2(X^{reg},\C)$ controls the deformation
of $X^{reg}$, while $\lambda_i$ controls the deformation of
the slice $\Sigma_i, i=1,\ldots,k$ induced by the deformation of $X$.
We will not use this claim in what follows.
\end{Rem}

%Let us also record the following result of Namikawa.
%
%\begin{Prop}\label{Prop:affine_deform}
%The morphism $\tilde{X}_{\param}\rightarrow X_{\param}$ is an isomorphism
%over a Zariski generic locus in $\param$.
%\end{Prop}

\subsection{Universal deformation of $X$}\label{SS_univ_deform}
In \cite[Theorem 5.1]{Namikawa1}, Namikawa has proved that the Poisson deformation functor of $X$
is unobstructed. Here is variant of his result for graded Poisson deformations.

%As we will see in Section \ref{SS_Poiss_deriv} one has the following stronger version for graded Poisson deformations.

\begin{Prop}\label{Prop:X_univ_graded}
Recall that $X$ is a conical symplectic singularity.
There is a positively graded polynomial algebra $B$ and a graded Poisson $B$-algebra
$A$ such that $\C\otimes_B A\xrightarrow{\sim} \C[X]$ with the following universal
property:
\begin{itemize}
\item For any  finitely generated positively graded algebra $B'$ and a graded Poisson $B'$-algebra
$A'$ such that $\C\otimes_{B'}A'\xrightarrow{\sim}\C[X]$ there is a unique
graded algebra homomorphism $B\rightarrow B'$ and a $B'$-linear Poisson
graded algebra isomorphism $B'\otimes_B A\xrightarrow{\sim}A'$ intertwining
the isomorphisms $\C\otimes_{B'}A'\xrightarrow{\sim}\C[X]\xrightarrow{\sim} \C\otimes_B A$.
\end{itemize}
In the notation of \cite[(14)]{Namikawa1} we have $B=\C[x_1,\ldots,x_d]$ and
$A=\C[\mathcal{X}]$.
\end{Prop}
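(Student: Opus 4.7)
The plan is to set $B := \C[\param]^W$ -- a positively graded polynomial algebra by the Chevalley--Shephard--Todd theorem applied to the crystallographic reflection group $W$ on $\param$ -- and $A := \C[X_{\param}]^W$, made into a graded Poisson $B$-algebra via the $W$-compatible action of Proposition \ref{Prop:deform_X}(2). Namikawa's $W$ acts trivially on the central fiber $\C[X]$, a property that can be verified on each local slice $\C^2/\Gamma_i$ where it is explicit.

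To verify $\C\otimes_B A\cong \C[X]$, I would pick a graded $\C[\param]$-basis $\{b_i\}$ of $\C[X_{\param}]$ (which is free over $\C[\param]$ by Proposition \ref{Prop:deform_X}(1)) whose images span $\C[X]$. Since $W$ acts trivially on $\C[X]$, each averaged element $\tilde b_i:=|W|^{-1}\sum_{w\in W}w(b_i)$ lies in $A$ and still projects to the given basis of $\C[X]$; Nakayama in the graded sense then makes $\{\tilde b_i\}$ a $\C[\param]$-basis of $\C[X_{\param}]$. A routine verification using the identity $w(\sum p_i\tilde b_i)=\sum w(p_i)\tilde b_i$ shows that $\{\tilde b_i\}$ is in fact a $B$-basis of $A$, so $A\cong B\otimes_\C \C[X]$ as $B$-modules and $A/B_+A\cong \C[X]$.

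For the universal property, let $(B',A')$ be another graded Poisson deformation of $\C[X]$. To produce a classifying map $B\to B'$: form the formal completion $\widehat{A'}=\varprojlim A'/(B'_+)^nA'$, a formal Poisson deformation of $\C[X]$ over $\widehat{B'}$; using $H^i(\tilde X^{reg},\mathcal O)=0$ for $i=1,2$ (Proposition \ref{Prop:coh_vanish}) at every infinitesimal order, as in the proof of Corollary \ref{Cor:Q-deform}, lift $\widehat{A'}$ to a formal Poisson deformation of $\tilde X$ over $\widehat{B'}$; then invoke Proposition \ref{Prop:KV_deform} together with Corollary \ref{Cor:Q-deform} to obtain a graded algebra map $\widehat{\C[\param]}\to \widehat{B'}$. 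This lift is well-defined only up to the $W$-action by Proposition \ref{Prop:deform_iso}, so its restriction to $\widehat{B}=\widehat{\C[\param]^W}$ is canonical, and algebraizes to a graded homomorphism $B\to B'$ since $B$ is polynomial and $B'$ is graded finitely generated. The isomorphism $B'\otimes_B A\xrightarrow{\sim}A'$ is then built by pulling back the universal family along this classifying map and invoking the uniqueness part of Proposition \ref{Prop:KV_deform} on $\tilde X^{reg}$; uniqueness of the whole data reduces likewise to the uniqueness of the period.

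The main technical obstacle I anticipate is the lift from a deformation of the affine algebra $\C[X]$ to a deformation of the non-affine variety $\tilde X$, which requires propagating the cohomology vanishing of Proposition \ref{Prop:coh_vanish} through each infinitesimal order. A secondary subtlety is the descent along $\param\to\param/W$: the formal classifying map is defined only up to the $W$-action, and one must check that the resulting restriction to $W$-invariants is both well-defined and functorial in $B'$, which amounts to applying Proposition \ref{Prop:deform_iso} at each Artinian quotient $B'/(B'_+)^n$ and passing to the limit.
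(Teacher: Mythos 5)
The paper does not prove this proposition: it is quoted from Namikawa \cite{Namikawa1}, whose argument establishes prorepresentability and unobstructedness of the Poisson deformation functor of $X$ abstractly (via $T^1$-lifting and Schlessinger-type criteria, together with the conical $\C^\times$-action to algebraize the formal hull into a graded polynomial algebra), and only afterwards, in \cite{Namikawa2}, is the hull identified with $\param/W$. Your proposal instead takes the candidate $(B,A)=(\C[\param]^W,\C[X_\param]^W)$ and attempts to verify the universal property directly. The first half of your argument, checking $\C\otimes_B A\cong\C[X]$ by averaging a free homogeneous $\C[\param]$-basis of $\C[X_\param]$ and applying graded Nakayama, is fine (granted the triviality of the $W$-action on the special fiber, which is indeed correct).

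The gap is in the lifting step. You assert that the formal Poisson deformation $\widehat{A'}$ of $\C[X]$ over $\widehat{B'}$ lifts to a formal Poisson deformation of $\tilde X$ over $\widehat{B'}$, invoking $H^i(\tilde X^{reg},\mathcal{O})=0$ for $i=1,2$. Such a lift does not exist in general. On hulls, the comparison of Poisson deformation functors $PD_{\tilde X}\to PD_X$ is the map $\param\to\param/W$, whose differential at the origin vanishes identically whenever $W\neq 1$ (the generators of $\C[\param]^W$ have degree at least $2$), so already at first order a Poisson deformation of $X$ need not come from one of $\tilde X$. Concretely, for $X=\C^2/(\Z/2)$ one has $B'=\C[s]$ with $\deg s=4$ and $A'=\C[x,y,z,s]/(xy-z^2+s)$, while the universal graded deformation of $\tilde X$ lives over $\C[t]$ with $\deg t=2$ and $s\mapsto t^2$; a graded lift would require a degree-$2$ square root of $s$ inside $\C[s]$, which does not exist. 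The cohomological vanishing you cite has a different role in the paper, namely comparing deformations of $\tilde X$ with those of $\tilde X^{reg}$ and establishing flatness of global sections over $\param$; it says nothing about smoothness of $PD_{\tilde X}\to PD_X$. To produce the classifying map $B\to B'$ without such a lift is precisely what Namikawa's prorepresentability argument accomplishes, and this is what the paper is citing rather than re-deriving.
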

\begin{proof}
By \cite[Theorem 5.1]{Namikawa1}, the formal neighborhood of $X$ in $\mathcal{X}$
is the universal formal Poisson deformation of $X$. Arguing as in Step 2 of the proof of
Proposition \ref{Prop:KV_deform}, we see that this formal neighborhood is a universal
graded formal Poisson deformation. Then an easy analog of Step 3 of that proof shows
that $\mathcal{X}$ is a universal graded Poisson deformation. Translating to the language
of algebras, we get the claim of this proposition.
%Let $\mathfrak{m}$ denote the augmentation ideal of $B'$. Set $B'_k:=B'/\mathfrak{m}^k$
%and $A'_k:=A'/(\mathfrak{m}^k)$. We will prove by induction on $k$ that there is a unique
%graded algebra isomorphism $B\rightarrow B'_k$ and an isomorphism $\varphi_k:
%B'_k\otimes_B A\xrightarrow{\sim} A'_k$ of graded Poisson algebras lifting $\varphi_{k-1}$.
%The case of $k=0$ is vacuous. Now suppose we have constructed $\varphi_{k-1}$.
%, there is a unique algebra homomorphism $B\rightarrow B'_k$
%and a Poisson algebra isomorphism $B'_k\otimes_B A\xrightarrow{\sim} A'_k$ lifting
%$\varphi_{k-1}$. Conjugating $B\rightarrow B'_k$ by the $\C^\times$-action we get another
%homomorphism satisfying the conditions of the previous section. By the uniqueness, it is graded.
%It remains to show that we can choose $\varphi_k$ to be graded. The space of lifts of
%$\varphi_{k-1}$ to a Poisson algebra isomorphism $B'_k\otimes_B A\xrightarrow{\sim} A'_k$
%is an affine space with an affine action of $\C^\times$. So there is a fixed point, which gives a
% graded Poisson algebra isomorphism $\varphi_k$.
%
%Passing the inverse limit in the category of graded algebras, we get the required homomorphisms
%$B\rightarrow B'$ and $B'\otimes_B A\xrightarrow{\sim} A'$.
\end{proof}

\begin{Cor}\label{Cor:X_univ_deform}
We have $B=\C[\param]^W$ and $A=\C[X_{\param}]^W$.
\end{Cor}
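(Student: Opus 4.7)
Plan. The approach is to verify that $(B_0, A_0) := (\C[\param]^W, \C[X_\param]^W)$ is a graded Poisson deformation of $\C[X]$ satisfying the universal property of Proposition \ref{Prop:X_univ_graded}; the uniqueness clause in that proposition then identifies $(B_0, A_0)$ with Namikawa's $(B, A)$.

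\textbf{Step 1 (the candidate deformation).} First I would check that $(B_0, A_0)$ is a graded Poisson deformation of $\C[X]$ over a finitely generated positively graded algebra. Part (2) of Proposition \ref{Prop:deform_X} gives a $W$-action on $\C[X_\param]$ by graded Poisson automorphisms intertwining the action on $\C[\param]$, so $\C[X_\param]^W$ is a graded Poisson $\C[\param]^W$-algebra. Since $W$ acts on $\param$ as a complex reflection group, Chevalley--Shephard--Todd makes $\C[\param]$ free of rank $|W|$ over $\C[\param]^W$, and together with Proposition \ref{Prop:deform_X}(1) this makes $\C[X_\param]$ free over $\C[\param]^W$. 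For the central fibre, observe that $W$ preserves the ideal $(\param) \subset \C[X_\param]$ and induces the trivial action on $\C[X_\param]/(\param) = \C[X]$, since Namikawa's construction of the $W$-action is by monodromy that fixes the central fibre pointwise. Using the Reynolds operator $R = \frac{1}{|W|}\sum_{w \in W} w$ and the reflection-group decomposition $\C[\param] = \C[\param]^W \otimes H$ (with $H$ the coinvariant algebra), one concludes $\C \otimes_{\C[\param]^W} \C[X_\param]^W \xrightarrow{\sim} \C[X]$.

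\textbf{Step 2 (matching with Namikawa's universal $(B, A)$).} The universal property of Proposition \ref{Prop:X_univ_graded} applied to $(B_0, A_0)$ supplies a unique graded algebra map $\varphi \colon B \to B_0$ together with a $B_0$-linear Poisson algebra isomorphism $B_0 \otimes_B A \xrightarrow{\sim} A_0$. It remains to show that $\varphi$ is an isomorphism, for which I would construct the inverse using Proposition \ref{Prop:deform_iso}. Each closed point $\xi' \in (\operatorname{Spec} B)(\C)$ produces, via the fibre $A \otimes_B \C_{\xi'}$ together with the filtration coming from the $\C^\times$-orbit through $\xi'$, a filtered Poisson deformation of $\C[X]$; by Proposition \ref{Prop:deform_iso} this deformation is $\C[X_\lambda]$ for a unique $W$-orbit $W\lambda \subset \param$. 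This gives a $\C^\times$-equivariant map of sets $(\operatorname{Spec} B)(\C) \to \param/W$, and once algebraicity is verified it corresponds to a graded algebra map $\psi \colon \C[\param]^W \to B$. Applying the uniqueness clause of Proposition \ref{Prop:X_univ_graded} once to $(B, A)$ itself and once to $(B_0, A_0)$ then forces $\psi \circ \varphi = \operatorname{id}_B$ and $\varphi \circ \psi = \operatorname{id}_{\C[\param]^W}$.

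\textbf{Main obstacle.} The principal technical difficulty is promoting the set-theoretic map $(\operatorname{Spec} B)(\C) \to \param/W$ from Proposition \ref{Prop:deform_iso} to a morphism of schemes. My plan is to exploit the flat family $A \to B$ together with the universal family $\tilde X_\param \to \param$ from Corollary \ref{Cor:Q-deform}: the fibrewise isomorphisms supplied by Proposition \ref{Prop:deform_iso} should glue, via the uniqueness built into that proposition and the flatness of both sides, into a $W$-torsor structure on $\operatorname{Spec} B \times_{\param/W} \param$ over $\operatorname{Spec} B$, whose classifying map to $\param/W$ is automatically algebraic. The conical grading and the fact that every algebra in sight is finitely generated over a positively graded base keep this gluing honest; once $\psi$ exists as a graded algebra homomorphism, Step 2 concludes the proof.
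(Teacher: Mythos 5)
Your Step~1 is essentially the same as the paper's opening move (take $B'=\C[\param]^W$, $A'=\C[X_\param]^W$ and feed it into Proposition~\ref{Prop:X_univ_graded}), and your use of Proposition~\ref{Prop:deform_iso} corresponds to the paper's injectivity step. But there is a genuine gap in Step~2 that your ``main obstacle'' paragraph does not address --- and it is a gap that no amount of gluing will close, because it concerns the surjectivity, not the algebraicity, of the would-be inverse.

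You assert that for each closed point $\xi'\in\operatorname{Spec}(B)$, ``by Proposition~\ref{Prop:deform_iso} this deformation is $\C[X_\lambda]$ for a unique $W$-orbit.'' Proposition~\ref{Prop:deform_iso} does \emph{not} say this. It only classifies isomorphisms \emph{among} the deformations $\C[X_\lambda]$; it is silent on whether an arbitrary filtered Poisson deformation of $\C[X]$ --- in particular, a fibre of the Namikawa universal family $A/B$ --- is of the form $\C[X_\lambda]$ at all. That every such fibre \emph{is} some $\C[X_\lambda]$ is exactly the surjectivity of the morphism $\param/W\to\operatorname{Spec}(B)$, which the paper does not derive from Proposition~\ref{Prop:deform_iso}; it quotes it, together with finiteness, from Namikawa's Theorem~5.5 in \cite{Namikawa1}. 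Once surjectivity and finiteness are in hand, Proposition~\ref{Prop:deform_iso} supplies injectivity, and a finite bijective morphism between two normal (indeed, smooth) affine varieties of characteristic zero is an isomorphism --- so the inverse $\psi$ is produced automatically, with no separate algebraicity argument required.

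So the difference is not merely that you take a longer route: your construction of $\psi$ has a missing hypothesis. To make it work you would have to first prove that the natural map $\param/W\to\operatorname{Spec}(B)$ is surjective and finite (or argue directly that every fibre of $A/B$ is a $\C[X_\lambda]$, which amounts to the same thing). That is the substantive input from \cite{Namikawa1} that the paper is built on, and it cannot be recovered from Propositions~\ref{Prop:deform_X}, \ref{Prop:deform_iso}, and the universal property alone. As a smaller point, your verification in Step~1 that $\C\otimes_{\C[\param]^W}\C[X_\param]^W\cong\C[X]$ is on the right track (taking $W$-invariants is exact, $(B_{0,+}\cdot\C[X_\param])^W=B_{0,+}\cdot A_0$ via the Reynolds operator, and the residue of the $W$-action on the central fibre is trivial), but it should be spelled out rather than gestured at; in the paper this is implicitly covered by the statement that one ``can take'' $(B',A')=(\C[\param]^W,\C[X_\param]^W)$, which silently relies on Proposition~\ref{Prop:deform_X}.
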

\begin{proof}
By \cite[Theorem 1.1]{Namikawa2}, $B=\C[\param]^W$.
(\ref{eq:morphism_families}) shows  that $\C[\mathcal{X}]$
is $\C[X_{\param}]^W$.
\end{proof}

Using this corollary we can give a proof of Proposition \ref{Prop:deform_iso}.

\begin{proof}[Proof of Proposition \ref{Prop:deform_iso}]
Recall, see the discussion in the beginning of Section \ref{SS_deform_Q_fact}, that the filtered deformations of $\C[X]$ are in one-to-one correspondence with graded deformations of $X$
over $\C[\hbar]$. In particular, $\C[X_\lambda]$ corresponds to the graded deformation
$\C\times_{\param/W}X_\param/W$, where the morphism $\C\rightarrow \param/W$ is given
by $z\mapsto W(\lambda z^d)$. Thanks to the universal property of $X_\param/W$,
Proposition \ref{Prop:X_univ_graded}, Corollary \ref{Cor:X_univ_deform}, we see that
an isomorphism $\C[X_\lambda]\cong \C[X_{\lambda'}]$ of filtered Poisson deformation is equivalent to
the coincidence of the morphisms $\C\rightarrow \param/W$ coming from $\lambda$ and $\lambda'$, which, in turn, is equivalent to
$\lambda'\in W\lambda$.
\end{proof}

\subsection{Poisson derivations}\label{SS_Poiss_deriv}
%Below we will need to understand the Poisson derivations of $\C[X]$ that have negative degree.
Here is the main result of this section.

\begin{Prop}\label{Prop:Poisson_der}
Let, as before, $X$ be a conical symplectic singularity.
 Then all Poisson derivations of $\C[X]$ are Hamiltonian.
\end{Prop}

The crucial step in the proof is the following lemma (that is classical for
symplectic resolutions) whose proof in the  generality we need was communicated to
me by Kaledin.

\begin{Lem}\label{Lem:homol_vanish}
We have $H^1(\tilde{X}^{reg},\C)=0$.
\end{Lem}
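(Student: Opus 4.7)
The plan is to combine the vanishing from Proposition \ref{Prop:coh_vanish} with the topology provided by the conical $\C^\times$-action on $\tilde{X}$. The strategy has three steps: excise the singular locus, contract to a proper subvariety, then apply the analytic exponential exact sequence.

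First, because $\tilde{X}$ has terminal singularities, $\operatorname{codim}_{\tilde{X}}(\tilde{X}\setminus\tilde{X}^{reg})\geqslant 4$, so the singular locus has real codimension $\geqslant 8$. A Goresky--MacPherson-type vanishing of local cohomology ($\mathcal{H}^i_{\tilde{X}\setminus\tilde{X}^{reg}}(\C)=0$ for $i<8$) implies that the restriction map $H^1(\tilde{X},\C)\to H^1(\tilde{X}^{reg},\C)$ is an isomorphism, so it suffices to prove $H^1(\tilde{X},\C)=0$.

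Next, using Proposition \ref{Prop:terminalization}, the contracting $\C^\times$-action on $X$ lifts to $\tilde{X}$. Properness of $\rho$, combined with the fact that every orbit in $X$ contracts to $0$, forces $\lim_{t\to 0} t\cdot x$ to exist for every $x\in\tilde{X}$ and lie in $L:=\rho^{-1}(0)$. This gives a continuous deformation retraction of $\tilde{X}$ onto $L$ (first to the $\C^\times$-fixed locus, which is already contained in $L$, and then extending this to $L$, which itself retracts onto the fixed locus). Hence $H^1(\tilde{X},\C)=H^1(L,\C)$.

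Finally, I apply the analytic exponential sequence to a suitable neighborhood of $L$. Pick a $\C^\times$-invariant Stein open $W\subset X^{an}$ containing $0$ and set $U:=\rho^{-1}(W)$, an analytic tubular neighborhood of $L$ in $\tilde{X}^{an}$ which retracts onto $L$. By Proposition \ref{Prop:coh_vanish} and normality we have $H^i(\tilde{X},\mathcal{O}_{\tilde{X}})=0$ for $i=1,2$; since $X$ is affine this forces $R^i\rho_*\mathcal{O}_{\tilde{X}}=0$, and relative GAGA for the proper morphism $\rho$ yields $R^i\rho^{an}_*\mathcal{O}^{an}_{\tilde{X}}=0$; the Leray spectral sequence over the Stein base $W$ then delivers $H^i(U,\mathcal{O}^{an})=0$ for $i=1,2$. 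Plugging into the analytic exponential sequence,
\[
\Gamma(U,\mathcal{O}^{an})\xrightarrow{\exp}\Gamma(U,\mathcal{O}^{an,\ast})\to H^1(U,\Z)\to H^1(U,\mathcal{O}^{an})=0,
\]
identifies $H^1(L,\Z)=H^1(U,\Z)$ with the cokernel of $\exp$. To see this cokernel vanishes, decompose a unit $f\in\Gamma(U,\mathcal{O}^{an,\ast})$ by $\C^\times$-weights: the weight-zero piece restricts to a nowhere-vanishing holomorphic function on the compact $L$, hence is a constant on each component, while the positive-weight pieces vanish along $L$ and are therefore exponentials of holomorphic functions. Thus $H^1(L,\Z)=0$, and the universal coefficient theorem gives $H^1(L,\C)=0$.

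The step I expect to be subtlest is the last one — rigorously justifying the convergent $\C^\times$-weight decomposition of holomorphic units on $U$ and checking that positive-weight units are exponentials. A cleaner alternative is to establish $H^1(L,\mathcal{O}^{an}_L)=0$ directly via the theorem on formal functions applied to $\rho^{-1}(0)$, and then run the exponential sequence on the compact $L$ itself, where surjectivity of $\exp\colon\Gamma(L,\mathcal{O}^{an}_L)\to\Gamma(L,\mathcal{O}^{an,\ast}_L)$ is manifest by compactness.
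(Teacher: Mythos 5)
Your route is genuinely different from the paper's. The paper picks a resolution $\pi\colon\overline{X}\to\tilde{X}$ and invokes Kaledin's theorem (\cite[Corollary 1.5]{Kaledin_survey}) that $R^1\pi_*\C_{\overline{X}}=0$ and $R^1(\rho\circ\pi)_*\C_{\overline{X}}=0$ for resolutions of symplectic singularities; the Leray spectral sequence for $\rho\circ\pi=\rho\circ\pi$ then gives $R^1\rho_*\C_{\tilde{X}}=0$, and contractibility of $X$ plus $\operatorname{codim}\tilde{X}^{sing}\geqslant 4$ finish. You instead try to manufacture the topological vanishing out of the coherent vanishing $H^1(\tilde{X},\mathcal{O})=0$ of Proposition~\ref{Prop:coh_vanish} via the exponential sequence on a tubular neighborhood of the central fiber $L=\rho^{-1}(0)$. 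This is an appealing idea because it avoids Kaledin's topological input, but as written it does not close.

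The main gap is the final step. Notice first that a $\C^\times$-invariant open neighborhood of $0$ in $X^{an}$ is necessarily all of $X^{an}$ (the positive-weight action carries any point off to infinity), so you can only demand $S^1$-invariance of $W$; then $f=\sum_k f_k$ is a Fourier expansion and the passage from ``the positive-weight part of $f/f_0$ vanishes along $L$'' to ``$\log(f/f_0)$ is globally defined'' requires $|f/f_0-1|<1$ on $U$, which holds only after shrinking, at which point one must recheck $H^1(U,\mathcal{O}^{an})=0$ and the retraction onto $L$. Your ``cleaner alternative'' has a different leak: the theorem on formal functions gives $\varprojlim_n H^1(L_n,\mathcal{O}_{L_n})=0$, but this does \emph{not} imply $H^1(L,\mathcal{O}_L)=0$ without surjectivity of the transition maps, which would require $H^2(L,\mathcal{I}^n/\mathcal{I}^{n+1})=0$; an inverse system of finite-dimensional spaces can have zero limit while each term is nonzero. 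Finally, the deformation retraction $\tilde{X}\to L$ needs an argument (continuity of the limit map is not automatic on a singular $\tilde{X}$); the standard way is to show that $\C[U]$ has only nonnegative $\C^\times$-weights on each invariant affine chart $U$, so the action extends to a morphism $\C\times\tilde{X}\to\tilde{X}$. None of these holes is obviously unfixable, but each is a genuine piece of work that the paper's Kaledin-based argument simply bypasses.
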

\begin{proof}
Let us write $\mathcal{O}^{an}$ for the sheaves of holomorphic functions on $\tilde{X}$
and its open subvarieties.
We have the exponential exact sequence
$$H^0(\tilde{X}^{reg}, \mathcal{O}^{an})\rightarrow H^0(\tilde{X}^{reg}, (\mathcal{O}^{an})^\times)
\rightarrow H^1(\tilde{X}^{reg},\C)\rightarrow H^1(\tilde{X}^{reg}, \mathcal{O}^{an})$$
First of all, we claim that $H^1(\tilde{X}^{reg}, \mathcal{O}^{an})=0$.
Indeed, we have an exact sequence
$$H^1(\tilde{X}, \mathcal{O}^{an})\rightarrow H^1(\tilde{X}^{reg}, \mathcal{O}^{an})
\rightarrow H^2_{\tilde{X}^{sing}}(\tilde{X},\mathcal{O}^{an})$$
As in the algebraic situation, the first and the third terms are zero (see, e.g., \cite[Appendix]{Mitya_extension}) and so
$H^1(\tilde{X}^{reg}, \mathcal{O}^{an})$ is zero. So we get an exact sequence
\begin{equation}\label{eq:exact_exp} H^0(\tilde{X}^{reg}, \mathcal{O}^{an})\rightarrow H^0(\tilde{X}^{reg}, (\mathcal{O}^{an})^\times)
\rightarrow H^1(\tilde{X}^{reg},\C)\rightarrow 0.
\end{equation}

Note that, by the Hartogs theorem, we have $$H^0(\tilde{X}^{reg}, \mathcal{O}^{an})
\cong H^0(\tilde{X}, \mathcal{O}^{an}),
H^0(\tilde{X}^{reg}, (\mathcal{O}^{an})^\times)\cong H^0(\tilde{X}, (\mathcal{O}^{an})^\times).$$
From the analog of (\ref{eq:exact_exp}) for $\tilde{X}$, we conclude that
$H^1(\tilde{X}^{reg},\C)\cong H^1(\tilde{X},\C)$. So we need to show that the latter space
vanishes.

Let $\pi:\overline{X}\rightarrow \tilde{X}$ be a resolution of singularities.
By \cite[Corollary 1.5]{Kaledin_survey}, we have
$R^1(\rho\circ\pi)_*\C_{\overline{X}}=0$ (recall that $\rho$ stands for
the morphism $\tilde{X}\rightarrow X$) and $R^1\pi_*\C_{\overline{X}}=0$.
Since, obviously, $\pi_* \C_{\overline{X}}=\C_{\tilde{X}}$, we use the standard spectral sequence
for the composition of derived functors to check that $R^1\rho_*\C_{\tilde{X}}=0$.
Since $X$ is conical (and hence contractible), this implies that $H^1(\tilde{X},\C)=0$.
\end{proof}

\begin{proof}[Proof of Proposition \ref{Prop:Poisson_der}]
We will show that all Poisson vector fields on $X$ lift to $\tilde{X}^{reg}$,
then our claim will follow from Lemma \ref{Lem:homol_vanish}.

Recall the open subvariety $X^1\subset X$ from the proof of Lemma \ref{Lem:Cartan_compute}.
Let $\tilde{X}^1$ be the preimage of $X^1$ in $\tilde{X}$.
We claim that
\begin{equation}\label{eq:codim_inequality}
\operatorname{codim}_{\tilde{X}}\tilde{X}\setminus \tilde{X}^1\geqslant 2.
\end{equation}
Assume the contrary. Let $Z$ be an irreducible divisor inside $\tilde{X}\setminus \tilde{X}^1$.
Let $D_1,\ldots,D_k$ denote the irreducible components of the preimage of
$X^1\setminus X^{reg}$ in $\tilde{X}$. Since $H^i(\tilde{X}^{reg}, \mathcal{O}^{an})=0$
for $i=1,2$, we see that $H^2(\tilde{X}^{reg},\C)$ coincides with the complexified
analytic Picard group of $\tilde{X}^{reg}$. Let us write $[D_1],\ldots,[D_k],[Z]$
for the classes of the line bundles corresponding to these divisors in $H^2(\tilde{X}^{reg},\C)$.
Recall, Lemma \ref{Lem:Cartan_compute}, that $H^2(\tilde{X}^{reg},\C)=
H^2(X^{reg},\C)\oplus \operatorname{Span}([D_1],\ldots,[D_k])$.
Since $[Z]$ projects  to zero in $H^2(X^{reg},\C)$, it must lie
in  $\operatorname{Span}([D_1],\ldots,[D_k])$.

Thanks to \cite[Corollary 1.4.3]{BCHM}, we can contract all divisors
in $\tilde{X}$ but $Z$, we get a normal  variety $X'$ with a
morphism $X'\rightarrow X$. We also have a rational map $\tilde{X}\rightarrow X'$
that is defined outside of a codimension $2$ locus in $\tilde{X}$ because $\tilde{X}$
is proper over $X$.
It is easy to see that $Z$ defines a nontorsion class in the analytic
Picard group of $X'^{reg}$. Indeed, if this class is torsion, then we have an analytic function
on $X'$ whose vanishing locus is exactly $Z$. It must be pulled back from an analytic function on
$X$ whose vanishing locus has complement of codimension at least $2$ and is nonempty, which is impossible. Since $Z$ defines a nontorsion class in the analytic
Picard group of $X'^{reg}$, we apply pullback to $\tilde{X}$ to  see that $[Z]\not\in \operatorname{Span}([D_1],\ldots,[D_k])$,
a contradiction. This shows (\ref{eq:codim_inequality}).

Let $\xi$ be a Poisson
vector field on $X$, equivalently, on $\pi^{-1}(X^{reg})\subset \tilde{X}^{reg}$.
%The morphism $\pi:\tilde{X}^{reg}\rightarrow X$ is Poisson. We claim that it is semi-small.
%What we need to show is that for every symplectic leaf $\mathcal{L}\subset X$ of codimension
%$2d$ and every point $x\in \mathcal{L}$, we have $\dim (\rho^{-1}(x)\cap \tilde{X}^{reg})\leqslant d$.
%This follows, for example, from the proof of Lemma 5.1(ii) in the appendix to \cite{B_ineq}.
%
What we need to show is that $\xi$ extends to a regular vector field on $\pi^{-1}(X^{\wedge_x})$,
where $x$ is a point in a codimension $2$ leaf in $X$. This in turn will follow if
we check that the restriction of $\xi$ to $X^{\wedge_x}$ is Hamiltonian. But $X^{\wedge_x}$
is a symplectic quotient singularity. Note that any Poisson vector field on $(V/\Gamma)^{\wedge_0}$
lifts to a $\Gamma$-invariant Poisson vector field on $V^{\wedge_0}$. It follows that
any Poisson vector field on $(V/\Gamma)^{\wedge_0}$  and hence on $X^{\wedge_x}$ is
Hamiltonian. This completes the proof.
\end{proof}

%Let us now make a comment regarding the restriction of the proposition.
%
%\begin{Lem}
%Assume that there is $i$ with $0<i<d$ such that $\C[X]_i\neq 0$. Then the following holds:
%\begin{enumerate}
%\item The subalgebra $\mathfrak{k}=\bigoplus_{i=0}^{d-1}\C[X]_i$ is the Heisenberg subalgebra
%with center $\C$.
%%\item If $\C[X]_i\neq 0$ and $0<i<d$, then $2i=d$.
%%\item The Poisson bracket induces a non-degenerate skew-symmetric form on $\C[X]_{d/2}$.
%\item We have an isomorphism of graded Poisson algebras $\C[X]=S(\mathfrak{k}^+)\otimes \underline{A}$,
%where $\mathfrak{k}^+:=\bigoplus_{i=1}^{d-1}\C[X]_i$,  $S(\mathfrak{k}^+)$
%has the bracket induced by the non-degenerate skew-symmetric form
%on $\mathfrak{k}_+$ and $\underline{A}$ stands for the centralizer of $\mathfrak{k}_+$ in $\C[X]$.
%\end{enumerate}
%\end{Lem}
%\begin{proof}
%Clearly, the Poisson bracket on $\C[X]$ induces a
%\end{proof}

\section{Quantizations of symplectic singularities}\label{S_quant}
\subsection{Quantizations and period maps}\label{SS_quant_period}
This section is a quantum counterpart of Section \ref{SS_filt_deform}.

Let $A$ be a graded Poisson algebra with bracket of degree $-d$, where $d$ is a positive
integer. By a {\it filtered quantization} of $A$ one means a pair $(\A,\iota)$,
where
\begin{itemize}
\item  $\A$ is a filtered associative algebra, $\A=
\bigcup_{i\geqslant 0}\A_{\leqslant i}$, such that $[\A_{\leqslant i},\A_{\leqslant j}]
\subset \A_{\leqslant i+j-d}$,
\item and $\iota$ is  a graded Poisson algebra
isomorphism $\gr\A\xrightarrow{\sim} A$.
\end{itemize}

By an isomorphism $\psi$ of filtered
quantizations $(\A,\iota),(\A',\iota')$ we mean a filtration preserving algebra
isomorphism $\psi:\A\rightarrow \A'$ such that $\gr\psi$ intertwines $\iota,\iota'$.

Our goal is to classify the filtered quantizations of $\C[X]$, where $X$
is a conical symplectic singularity. As with filtered Poisson deformations, we are going to produce quantizations
of $\C[X]$  from those of $\tilde{X}$. We now explain what one means by a filtered quantization of $\tilde{X}$.

Let $X'$ be a Poisson scheme equipped with a $\C^\times$-action that rescales the Poisson bracket by
$t\mapsto t^{-d}$. We assume that every point in $X'$ has a $\C^\times$-stable affine open neighborhood.
By a result of Sumihiro, \cite{Sumihiro}, this is the case when $X'$ is normal.
By a {\it filtered quantization}  $\mathcal{D}$ of $X'$ we mean a sheaf of filtered associative algebras in the conical topology on $X'$, where the filtration
is complete and separated, together with an isomorphism $\iota:\gr\mathcal{D}\xrightarrow{\sim} \mathcal{O}_{X'}$
of graded Poisson algebras. We note that quantizations of $\C[X]$ are in a natural bijection with
quantizations of $X$ via taking global sections to get from $X$ to $\C[X]$ and taking the microlocalization to get back, see the discussion on ``Variations on formal microlocalization...'' in \cite[Section 1]{Ginzburg_CC}.

We can also talk about graded formal quantizations. These are pairs $(\mathcal{D}_\hbar,\iota)$. Here
$\mathcal{D}_\hbar$ is a sheaf a $\C[[\hbar]]$-algebras in the Zariski topology on $X$ that is flat over $\C[[\hbar]]$ and complete and separated in the $\hbar$-adic topology. We require that $[\mathcal{D}_\hbar,\mathcal{D}_\hbar]\subset \hbar^d \mathcal{D}_\hbar$, which gives rise to
a Poisson bracket on $\mathcal{D}_\hbar/(\hbar)$. We also require that $\C^\times$ acts on $\mathcal{D}_\hbar$ by $\C$-algebra automorphisms with $t.\hbar=t\hbar$. For $\iota$ we take a
$\C^\times$-equivariant Poisson isomorphism $\mathcal{D}_\hbar/(\hbar)\xrightarrow{\sim}\mathcal{O}_X$.

For a $\C^\times$-stable open affine subset $U\subset X'$ the algebra $\Gamma(U,\mathcal{D}_\hbar)$ acquires a $\C^\times$-action. The action on $\Gamma(U,\mathcal{D}_\hbar)/(\hbar^k)=
\Gamma(U,\mathcal{D}_\hbar/(\hbar^k))$ is not required to be rational. However, we can replace the $\C^\times$-action on $\mathcal{D}_\hbar$ canonically to make
the action on $\Gamma(U,\mathcal{D}_\hbar)/(\hbar^k)$ rational. Let us explain how to do this.
Consider the subgroup $\C^\times_{fin}\subset \C^\times$ of finite order elements.  It acts diagonalizably in any, not necessarily rational, representation of $\C^\times$.
Note that $\Gamma(U,\mathcal{D}_\hbar)/(\hbar^k)$ is $\C^\times$-equivariantly filtered with successive
quotients $\C[U]$. It follows that the characters of $\C^\times_{fin}$ in  $\Gamma(U,\mathcal{D}_\hbar)/(\hbar^k)$ are restrictions of characters of $\C^\times$. There is a unique rational action of $\C^\times$ on $\Gamma(U,\mathcal{D}_\hbar)/(\hbar^k)$ extending the action of $\C^\times$, it is by algebra automorphisms. These actions glue to an action on $\mathcal{D}_\hbar$ as in the previous paragraph. Below we will always assume that the action of $\C^\times$ on $\Gamma(U,\mathcal{D}_\hbar)/(\hbar^k)$ is rational for all open affine subsets $U\subset X'$.

With this assumption there is a natural bijection between the sets of isomorphism classes of filtered
quantizations and of graded formal quantizations.  Namely, take a filtered quantization $\mathcal{D}$ of $\Str_{X'}$. Form the Rees sheaf $R_\hbar(\mathcal{D})$ of $\mathcal{D}$ and complete it in the $\hbar$-adic topology. Then we extend
this completion (that is still a sheaf in the conical topology) to the Zariski
topology by localizing. We get a graded formal quantization in the sense of \cite[Section 2.2]{quant_iso}.
Conversely, given a graded formal quantization $\mathcal{D}_\hbar$ of $X'$ we can restrict
it to the conical topology. Inside this restriction we can consider the subsheaf $\mathcal{D}_{\hbar,fin}$ of $\C^\times$-finite sections. Because of the rationality assumption on the action of
$\C^\times$ on $\mathcal{D}_\hbar$, we recover $\mathcal{D}_\hbar$ as the $\hbar$-adic completion of
$\mathcal{D}_{\hbar,fin}$. The sheaf $\mathcal{D}_{\hbar,fin}/(\hbar-1)$ comes with a natural filtration. This filtration is complete and separated because the $\hbar$-adic topology on $\mathcal{D}_\hbar$ is so. So the resulting sheaf is a filtered quantization of $X'$. It is straightforward to see that these procedures define mutually inverse bijections between
the sets of isomorphism classes of filtered and of graded formal quantizations.

Now suppose $X'$ is smooth and symplectic. In this case a filtered quantization
$\mathcal{D}$ defines a class in $H^2(X',\C)$ to be called  the {\it period}
of $\mathcal{D}$, see
\cite[Section 4]{BK_quant}, where the case of formal quantizations was considered,
and \cite[Section 2]{quant_iso} that treats graded formal quantizations.

The following proposition
should be thought of as a quantum version of  Proposition \ref{Prop:KV_deform}
and Remark \ref{Rem:Namikawa_KV}.

\begin{Prop}\label{Prop:BK_quant}
Assume that $X'$ is smooth and  symplectic and $H^i(X',\mathcal{O}_{X'})=0$ for $i=1,2$.
Then the following claims hold:
\begin{enumerate}
\item
taking the period
defines a bijection between the isomorphism classes of filtered quantizations
and $\param:=H^2(X',\C)$.
\item Moreover,  there is a ``universal quantization'' $\mathcal{D}'_{\param}$.
This a sheaf of filtered flat $\C[\param]$-algebras in the conical topology with complete and separated
filtration and an isomorphism $\gr \mathcal{D}'_\param/(\param)\xrightarrow{\sim} \Str_{X'}$.
The completion of $\gr\mathcal{D}'$ at $0\in \param$ is the deformation of the universal deformation of $X'$, see Remark \ref{Rem:Namikawa_KV}. Moreover, the quantization $\mathcal{D}'_\lambda$ corresponding to $\lambda$
is obtained by specializing $\mathcal{D}'_\param$ to $\lambda$.
\end{enumerate}
\end{Prop}
\begin{proof}
Both (1) and (2) were proved in \cite[Corollary 2.3.3]{quant_iso} for the graded formal quantizations
of $X'$. Let us elaborate on the proof of (2) in the setting of filtered quantizations. Take the universal graded formal deformation of $X'$,
see Remark \ref{Rem:Namikawa_KV}, and its canonical quantization as in \cite[Corollary 2.3.3]{quant_iso}.
Pull it back to $X'$ getting a sheaf of $\C[[\param,\hbar]]$-algebras on $X'$ deforming $\Str_{X'}$.
We can convert it into a sheaf of filtered algebras as discussed before the proposition.
The claim about the specializations of $\mathcal{D}'_{\param}$ (in the language of graded formal quantizations) is \cite[Proposition 3.1]{BPW}.
\end{proof}

Now consider the variety $\tilde{X}$, a $\Q$-factorial terminalization of
a conical symplectic singularity $X$, and set $\param:=H^2(\tilde{X}^{reg},\C)$.

\begin{Cor}\label{Cor:Q_quant}
The following claims hold:
\begin{enumerate}
\item The filtered quantizations of $\tilde{X}$ are classified by $\param$.
The filtered quantization $\mathcal{D}_\lambda$ corresponding to $\lambda$ is the pushforward
of the filtered quantization $\mathcal{D}'_\lambda$ of $\tilde{X}^{reg}$ corresponding to $\lambda$.
\item Let $\mathcal{D}_\param$ denote the pushforward of the universal quantization $\mathcal{D}'_\param$ from $\tilde{X}^{reg}$ to $\tilde{X}$. Then $\mathcal{D}_\lambda$
    is the specialization of $\mathcal{D}_\param$ to $\lambda\in \param$.
\end{enumerate}
\end{Cor}
\begin{proof}
As for Poisson deformations over Artinian bases in Remark \ref{Rem:Namikawa_KV}, the sheaf theoretic pullback and pushforward define mutually inverse bijections between the sets of isomorphism classes of   filtered quantizations of $\tilde{X}^{reg}$
and of $\tilde{X}$, compare to \cite[Proposition 3.4]{BPW}. Now we use (1) of Proposition \ref{Prop:BK_quant} to establish (1).

Now we prove (2). Consider the  sheaf $\mathcal{D}'_{\param,\hbar}$ as in the proof of (2) of
Proposition \ref{Prop:BK_quant}. For any Artinian quotient $B$ of $\C[[\param,\hbar]]$, the base change
$B\otimes_{\C[[\param,\hbar]]}\mathcal{D}'_{\param,\hbar}$ is filtered with quotients isomorphic  to
$\Str_{\tilde{X}^{reg}}$. So $R^1\iota_*(B\otimes_{\C[[\param,\hbar]]}\mathcal{D}'_{\param,\hbar})=0$.
Since the filtration on $\mathcal{D}'_{\param,\hbar}$ induced by the maximal ideal in $\C[[\param\hbar]]$
is complete and separated, we get  $R^1\iota_*\mathcal{D}'_{\param,\hbar}=0$. It follows that
$\iota_*\mathcal{D}'_{\param,\hbar}$ is a deformation of $\Str_{\tilde{X}}$ over $\C[[\param,\hbar]]$.
The sheaf $\iota_* \mathcal{D}_\param$ of filtered algebras is obtained from $\iota_*\mathcal{D}'_{\param,\hbar}$ by passing to $\C^\times$-finite sections and taking the quotient
by $\hbar-1$. Specializing this sheaf to $\lambda$ we get a filtered quantization of $\tilde{X}$
whose restriction to $\tilde{X}^{reg}$ is $\mathcal{D}'_\lambda$. By the construction in (1),
this is the quantization $\mathcal{D}_\lambda$.
\end{proof}

%Let $\mathcal{D}'_\lambda, \mathcal{D}'_{\param}$ be the quantizations of $X':=\tilde{X}^{reg}$
%as in Proposition \ref{Prop:BK_quant}. Let $\mathcal{D}_\lambda, \mathcal{D}_\param$ denote the
%pushforwards to $\tilde{X}$. By the construction, $\mathcal{D}_\lambda$ is the specialization of
%$\mathcal{D}_\param$ to $\lambda\in \param$.

\subsection{Quantizations from $\Q$-terminalizations}\label{SS_quant_Q_term}
Now we will produce some filtered quantizations of $\C[X]$ following \cite[Section 3]{BPW} and state our main classification result.

Set $\A_\lambda:=\Gamma(\mathcal{D}_\lambda), \A_{\param}:=\Gamma(\mathcal{D}_\param)$,
where $\mathcal{D}_\lambda,\mathcal{D}_{\param}$ were introduced in the previous section.
The following is a quantum version of Proposition \ref{Prop:deform_X}.

\begin{Prop}\label{Prop:quant_X}
The following is true:
\begin{enumerate}
\item The algebras $\A_\lambda,\A_{\param}$ are filtered quantizations of $\C[X],\C[X_{\param}]$,
respectively. Moreover, $\A_\lambda$ is the specialization of $\A_\param$ to $\lambda$.
\item The group $W$ acts on $\A_{\param}$ by filtered algebra automorphisms so that the associated
graded action on
$\C[X_{\param}]$ coincides with the action from Proposition \ref{Prop:deform_X}.
Moreover, the actions of $W$ on $\A_{\param}$ and on $\param$ are compatible.
\end{enumerate}
\end{Prop}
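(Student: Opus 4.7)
My plan is to extract both statements from the universal quantization sheaf $\mathcal{D}_\param$ furnished by Corollary \ref{Cor:Q_quant} and Proposition \ref{Prop:BK_quant}, combined with the classical counterpart in Proposition \ref{Prop:deform_X}. The underlying cohomological input is that $H^i(\tilde{X},\mathcal{O}_{\tilde{X}})=0$ for $i=1,2$, which holds because $X$ is affine and symplectic singularities are rational, so $R\rho_*\mathcal{O}_{\tilde{X}}=\mathcal{O}_X$.

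For (1), I would work with the Rees sheaf $R_\hbar(\mathcal{D}_\param)$, a flat sheaf of graded $\C[\hbar]\otimes\C[\param]$-algebras on $\tilde{X}$ whose reduction modulo $\hbar$ is $\mathcal{O}_{\tilde{X}_\param}$. The filtered algebra $\A_\param=\Gamma(\tilde{X},\mathcal{D}_\param)$ is the $\hbar=1$ fiber of $\Gamma(\tilde{X},R_\hbar(\mathcal{D}_\param))$, and to identify $\gr\A_\param$ with $\C[X_\param]$ I would check that $\Gamma(\tilde{X},-)$ commutes with reduction mod $\hbar$ on $R_\hbar(\mathcal{D}_\param)$. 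This is controlled by $H^1(\tilde{X},\mathcal{O}_{\tilde{X}_\param})$, which vanishes because the $\param$-adic graded pieces are direct sums of copies of $\mathcal{O}_{\tilde{X}}$ and $H^1(\tilde{X},\mathcal{O}_{\tilde{X}})=0$. Proposition \ref{Prop:deform_X}(1) then identifies $\Gamma(\tilde{X},\mathcal{O}_{\tilde{X}_\param})$ with $\C[X_\param]$. The specialization identity $\A_\lambda=\A_\param\otimes_{\C[\param]}\C_\lambda$ follows analogously by resolving $\C_\lambda$ by a Koszul complex over $\C[\param]$ and invoking the same vanishing.

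For (2), Proposition \ref{Prop:deform_X}(2) provides a $W$-action on $\tilde{X}_\param$ by graded Poisson automorphisms covering the standard $W$-action on $\param$. For each $w\in W$, the pullback $w^*\mathcal{D}_\param$ is a filtered quantization of $\mathcal{O}_{\tilde{X}_\param}$ (viewed as a $\C[\param]$-algebra via $w$), so the universal property of $\mathcal{D}_\param$ in the family form of Proposition \ref{Prop:BK_quant} supplies an isomorphism $w^*\mathcal{D}_\param\cong\mathcal{D}_\param$ and hence a filtered automorphism $\tilde w$ of $\mathcal{D}_\param$ lifting the action of $w$ on $\mathcal{O}_{\tilde{X}_\param}$ and on $\param$. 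Taking global sections produces the desired action on $\A_\param$, with the compatibility on $\gr$ holding by construction.

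The main technical obstacle is the cocycle condition: the universal property only determines $\tilde w$ up to composition with a filtered automorphism of $\mathcal{D}_\param$ that is trivial on $\gr$, so promoting $w\mapsto\tilde w$ to a group homomorphism requires rigidity for such lifts. I would handle this by an $\Ext^1$-type argument showing that automorphisms of $\mathcal{D}_\param$ acting trivially on $\mathcal{O}_{\tilde{X}_\param}$ are exponentials of inner derivations of negative filtration degree, and then applying (a family version over $\param$ of) Proposition \ref{Prop:Poisson_der} to conclude that no nonzero such lifts exist. With this vanishing, the choice of $\tilde w$ is canonical, so $w\mapsto\tilde w$ is automatically multiplicative and the $W$-action is well-defined.
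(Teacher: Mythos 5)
Your treatment of part (1) is essentially the paper's approach: the paper records it as a ``standard corollary'' of the vanishing $H^i(\tilde{X}^{reg},\mathcal{O}_{\tilde{X}})=0$, $i=1,2$ (Proposition \ref{Prop:coh_vanish}), and $\C[\tilde{X}^{reg}]=\C[X]$, and your Rees-sheaf argument spells out exactly that. The only cosmetic point is that the sheaf $\mathcal{D}_\param$ lives on $\tilde{X}^{reg}$ and the vanishing there (not on all of $\tilde{X}$) is what the paper uses; this makes no material difference since $\tilde{X}$ has terminal singularities of codimension $\geqslant 4$.

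Part (2), however, has a genuine gap. You begin by asserting that Proposition \ref{Prop:deform_X}(2) provides a $W$-action on the scheme $\tilde{X}_\param$. It does not: that proposition only produces a $W$-action on the \emph{algebra} $\C[\tilde{X}_\param]=\C[X_\param]$, i.e.\ on the affinization $X_\param$. In general $W$ does \emph{not} act on the $\Q$-factorial terminalization $\tilde{X}_\param$ — for $w\in W$ the transform of $\tilde{X}_\param$ is a different $\Q$-terminalization of $X_\param$, related to $\tilde{X}_\param$ by a flop. (Already in the Springer resolution example, $W$ does not act on $\tilde{\g}=G\times_B\mathfrak{b}$.) Consequently the pullback $w^*\mathcal{D}_\param$ in your sense is ill-defined as a filtered quantization of $\mathcal{O}_{\tilde{X}_\param}$, and even if you interpret $w^*$ as twisting only the $\C[\param]$-structure, the periods no longer match: the period of $w^*\mathcal{D}_\param$ over $\lambda$ is $w^{-1}\lambda$, not $\lambda$, so the universal property of Proposition \ref{Prop:BK_quant} gives \emph{no} isomorphism $w^*\mathcal{D}_\param\cong\mathcal{D}_\param$.

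The argument the paper points to (\cite[Proposition 3.10]{BPW}) is genuinely subtler: one must pass through the $W$-conjugate $\Q$-terminalizations and use that the global sections of the quantizations on flop-related terminalizations coincide (via flatness over $\param$ and the identification over the generic locus of $\param$), thereby transporting the algebra $\A_\param$ along $w$. Your observation that a cocycle check is needed, and that rigidity should follow from vanishing of negative-degree Poisson derivations as in Proposition \ref{Prop:Poisson_der}, is correct and is indeed how the paper handles the analogous uniqueness later (proof of Proposition \ref{Prop:univer_AY}); but it addresses only the uniqueness of the lifts, not their existence, which is where your proposal breaks down.
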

\begin{proof}
The proof of (1) is standard, let us provide it for reader's convenience.

We start by proving the claim for $\A_\lambda$.
Recall,  Proposition \ref{Prop:coh_vanish}, that $\C[\tilde{X}]=\C[X]$ and
$H^i(\tilde{X},\mathcal{O}_{\tilde{X}})=0$ for $i>0$, in particular for $i=1$.
Consider the $\hbar$-adic completion $\mathcal{D}_{\lambda,\hbar}$ of the Rees sheaf
of $\mathcal{D}_\lambda$. This is a graded formal deformation of $\mathcal{O}_{\tilde{X}}$.
As in the proof of (2) of Corollary \ref{Cor:Q_quant}, $H^1(\tilde{X},\mathcal{D}_{\lambda,\hbar})=0$. This, in turn, implies that
$\Gamma(\mathcal{D}_{\lambda,\hbar})/(\hbar)\xrightarrow{\sim} \C[\tilde{X}]$.
The algebra $\A_\lambda=\Gamma(\mathcal{D}_\lambda)$ is obtained from
$\Gamma(\mathcal{D}_{\lambda,\hbar})$ by taking the $\C^\times$-finite sections and then
passing to quotient by $\hbar-1$. It follows that $\gr \A_\lambda\xrightarrow{\sim} \C[\tilde{X}]$.
This is an isomorphism of graded Poisson algebras. So $\A_\lambda$ is a quantization of
$\C[\tilde{X}]$.

Now we prove that $\A_\param$ is a quantization of $\C[X_\param]$. Let $\mathfrak{m}$
denote the maximal ideal of $0$ in $\C[\param]$.  For $k>0$, set
$\tilde{X}_{\param,k}:=\operatorname{Spec}(\C[\param]/\mathfrak{m}^k)\times_{\param}\tilde{X}_\param$. Since $H^i(\tilde{X},\mathcal{O}_{\tilde{X}})=0$ for all $i>0$, we use the long exact sequence in cohomology
to show that  $H^i(\tilde{X}_{\param,k},\mathcal{O}_{\tilde{X}_{\param,k}})=0$
for $i>0$. Thanks to the formal function theorem,  the completions
of $H^i(\tilde{X}_\param, \mathcal{O}_{\tilde{X}_\param})$ vanish. Thanks to the contracting $\C^\times$-action on $\tilde{X}_\param$, we conclude that
$H^i(\tilde{X}_\param, \mathcal{O}_{\tilde{X}_\param})=0$ for $i>0$. Now the proof that $\A_{\param}$
is a filtered quantization of $\C[\tilde{X}_\param]$ repeats the proof for $\A_\lambda$ in the previous paragraph.

Let us show that $\A_\lambda$ is the specialization of $\A_\param$. Let $\param_1\subset \param$
be an affine space. We write $\mathcal{D}_{\param_1,\hbar}$ for the completion of the Rees sheaf of
the specialization $\mathcal{D}_{\param_1}$ of $\mathcal{D}_\param$. Similarly to the previous paragraph, the sheaf $\mathcal{D}_{\param_1,\hbar}$ has no higher cohomology. Using the long exact sequences in cohomology and the descending induction on $\dim \param_1$ one proves that $\Gamma(\mathcal{D}_{\param_1,\hbar})$ is the specialization of $\Gamma(\mathcal{D}_{\param,\hbar})$.
In particular, $\A_\lambda$ is the specialization of $\A_{\param}$ to $\lambda$.

%The first part is again a standard corollary of $H^i(\tilde{X}^{reg},\mathcal{O}_{\tilde{X}})=0, i=1,2,$
%and $\C[\tilde{X}^{reg}]=\C[X]$. The second part is proved in the same way as
%\cite[Proposition 3.10]{BPW}.
(2) is what is proved in the proof of \cite[Proposition 3.10]{BPW} (note that the statement of that
proposition is formally weaker).  The smooth locus $X_\param^{reg}$  in $X_\param$
is preserved by $W$. As argued in the proof of \cite[Proposition 3.10]{BPW},
since the restriction of $\mathcal{D}_\param$ to $X_\param^{reg}$ is  the canonical
quantization of $X_\param^{reg}$, the action of $W$ on $\mathcal{O}_{X_\param^{reg}}$
lifts to $\mathcal{D}_\param|_{X_\param^{reg}}$
and hence to $\Gamma(\mathcal{D}_\param|_{X_\param^{reg}})$. And, as argued in the proof of
\cite[Proposition 3.10]{BPW}, the global sections coincide with $\A_\param$.
By the construction, the action of $W$ on $\gr\A_\param$ coincides with the action on
$\C[\tilde{X}_\param]$.
\end{proof}

The following is one of the main results of this paper.

\begin{Thm}\label{Thm:quant_classif}
Any filtered quantization of $\C[X]$ is isomorphic
to $\A_\lambda$ for some $\lambda$. Moreover,
$\A_\lambda,\A_{\lambda'}$ are isomorphic as filtered quantizations if and only if $\lambda'\in W\lambda$.
\end{Thm}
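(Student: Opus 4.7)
The plan is to mirror the proof of Proposition \ref{Prop:deform_iso} (the classification of filtered Poisson deformations) and substitute quantizations for Poisson deformations throughout. The ``if'' direction of the second assertion is immediate from Proposition \ref{Prop:quant_X}(2): the $W$-action on $\A_\param$ intertwines the $W$-action on $\param$, so specializing a $W$-equivariant automorphism at $\lambda$ gives $\A_\lambda\cong \A_{w\lambda}$. It remains to prove that every filtered quantization $\A$ of $\C[X]$ is isomorphic to some $\A_\lambda$ (surjectivity) and that such a $\lambda$ is unique up to $W$ (injectivity). Both arguments extract a parameter $\lambda=(\lambda_0,\lambda_1,\ldots,\lambda_k)$ from $\A$ via the splitting $\param=H^2(X^{reg},\C)\oplus \bigoplus_{i=1}^k\param_i$ of Lemma \ref{Lem:Cartan_compute}, reading off $\lambda_0$ from the smooth locus and each $\lambda_i$ from the formal neighborhood of the $i$-th codimension-two symplectic leaf.

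More precisely, restricting $\A$ to $X^{reg}$ yields a sheaf quantization of the symplectic variety $X^{reg}$, whose period is by definition $\lambda_0\in H^2(X^{reg},\C)$ in the sense of Proposition \ref{Prop:BK_quant}. For each codimension-two leaf $\mathcal{L}_i$ and each $x\in \mathcal{L}_i$, the completed Rees algebra $R_\hbar(\A)^{\wedge_x}$ splits as a tensor product $\C[[T_x\mathcal{L}_i]]\widehat{\otimes}B_{i,\hbar}$, where $B_{i,\hbar}$ is a $\C^\times$-equivariant formal quantization of $\C[\Sigma_i]^{\wedge_0}$ (compare Step 2 of the proof of Proposition \ref{Prop:deform_iso} and \cite[Section 2.1]{W_prim}). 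Formal quantizations of Kleinian singularities are classified up to isomorphism by $\widehat{\param}_i/\widehat{W}_i$; since the resulting invariant cannot depend on the choice of $x$ and different choices are related by the monodromy of $\pi_1(\mathcal{L}_i)$, it lies in $\widehat{\param}_i^{\pi_1(\mathcal{L}_i)}/W_i=\param_i/W_i$. Pick a representative $\lambda_i\in \param_i$ and assemble $\lambda:=(\lambda_0,\lambda_1,\ldots,\lambda_k)\in \param$.

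It remains to show $\A\cong \A_\lambda$ and, conversely, that an isomorphism $\A_\lambda\cong \A_{\lambda'}$ forces $\lambda'\in W\lambda$; both rely on the same local comparison. For the injectivity direction, any such isomorphism restricts to an isomorphism of quantizations of $X^{reg}$ (forcing $\lambda_0=\lambda'_0$ via periods) and induces, after completion at $x\in \mathcal{L}_i$ and removal of the Weyl-algebra factor $\C[[T_x\mathcal{L}_i]]$, an isomorphism of formal Kleinian quantizations; the quantum analogue of Steps 3--4 of the proof of Proposition \ref{Prop:deform_iso}, invoking \cite[Proposition 6.6.1]{quant_iso} and the Weyl-chamber argument for $\pi_1(\mathcal{L}_i)$-invariance, upgrades this to $W_i$-conjugacy of $\lambda_i$ and $\lambda'_i$. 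For the surjectivity direction, by construction $\A$ and $\A_\lambda$ agree on $X^{reg}$ (same period) and have isomorphic completions at every codimension-two leaf point (same Kleinian parameter); together with Corollary \ref{Cor:Q_quant} this should pin down a common sheaf quantization of $\tilde{X}^{reg}$ whose global sections recover both, giving $\A\cong \A_\lambda$ via $\C[\tilde{X}]=\C[X]$ and the vanishing $H^i(\tilde{X}^{reg},\mathcal{O}_{\tilde{X}})=0$ for $i=1,2$.

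The main obstacle is precisely the last step: producing from $\A$ a single sheaf quantization of $\tilde{X}^{reg}$ whose restriction to $X^{reg}$ has period $\lambda_0$ and whose formal completions at the exceptional divisors over each $\mathcal{L}_i$ match those prescribed by $\lambda_i$. Unlike for a full symplectic resolution, the extension of a quantization of $X^{reg}$ across the codimension-one exceptional locus of $\tilde{X}^{reg}\to X$ is not unique but is parametrized precisely by the $\lambda_i$, consistently with Lemma \ref{Lem:Cartan_compute}. The gluing should proceed along the lines of \cite[Section 3]{BPW}, with the vanishing $H^1(\tilde{X}^{reg},\C)=0$ from Lemma \ref{Lem:homol_vanish} ruling out additional cohomological twists that could obstruct the match.
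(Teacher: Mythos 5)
Your proposal for the injectivity half (that $\A_\lambda\cong\A_{\lambda'}$ forces $\lambda'\in W\lambda$) is in the spirit of what works: one can indeed imitate Steps 1--4 of the proof of Proposition \ref{Prop:deform_iso}, restricting to $X^{reg}$ to pin down $\lambda_0$ and completing at points of the codimension-two leaves to pin down the $\lambda_i$ up to $W_i$-conjugacy. This is also essentially the technique used in the proof of Proposition \ref{Prop:SRA_univer}. That part is a legitimate alternative route.

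The surjectivity half, however, has a genuine gap, and it is exactly the one you flag yourself in your last paragraph. Given an abstract filtered quantization $\A$ of $\C[X]$, you have no procedure that produces a sheaf quantization of $\tilde{X}^{reg}$. Microlocalizing $\A$ gives a quantization of $X^{reg}$ only; extending it across the exceptional divisors of $\tilde{X}^{reg}\to X$ is not determined by the local invariants $\lambda_i$ you extracted, and Lemma \ref{Lem:homol_vanish} (which rules out negative-degree Poisson vector fields via Proposition \ref{Prop:Poisson_der}) does not resolve this extension/gluing problem. Matching periods over $X^{reg}$ and matching formal Kleinian invariants does not by itself show $\A\cong\A_\lambda$: a priori two quantizations of $\C[X]$ could agree on all this local data without being isomorphic, and that is exactly what must be excluded.

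The paper's proof is genuinely different and avoids this obstruction entirely. It deduces Theorem \ref{Thm:quant_classif} from Proposition \ref{Prop:quant_univ_property} (a universal property for $\A_\param^W$), which is in turn proved via the auxiliary moduli scheme $Y$ of Sections \ref{SS_scheme_Y}--\ref{SS_Y_str}. The scheme $Y$ parametrizes ``deformation data'' $(*,\langle\cdot,\cdot\rangle)$ simultaneously covering Poisson deformations (the fiber $z=0$) and quantizations ($z\ne 0$). The universal property of $\A_Y$ (Proposition \ref{Prop:univer_AY}) uses Proposition \ref{Prop:Poisson_der} crucially, and then Proposition \ref{Cor:AY0_iso} identifies the fiber $Y_0$ with $\param/W\times U$ using Namikawa's classical result (Corollary \ref{Cor:X_univ_deform}). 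Lemma \ref{Lem:Struct_Y} then upgrades this to $Y'\cong U\times\param/W\times\C$ by a positive-grading argument. In other words, the quantum classification is bootstrapped from the Poisson classification through a flat moduli family, rather than by comparing local invariants of an abstract quantization to those of $\A_\lambda$. Your proposal never engages with this mechanism, and without it the surjectivity claim remains unproved.
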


In fact, the algebra $\A_{\param}^W$ enjoys a universal property similar to that of $\C[X_{\param}]^W$.
This property is the subject of the next proposition, which is a more technical version
of Theorem \ref{Thm:quant_classif} and also implies that theorem.

%
%To state this property, it is more convenient to talk about graded quantizations. Namely,
%let $A$ be a graded Poisson algebra with bracket of degree $-d$.

\begin{Prop}\label{Prop:quant_univ_property}
Let $B'$ be a finitely generated commutative positively graded algebra and $A'$ be a graded Poisson flat $B'$-algebra
such that $\C\otimes_{B'}A'=\C[X]$. Further, let $\A'$ be a $B'$-algebra that is a filtered quantization
of $A'$ such that the structure map $B'\rightarrow \A'$ is a filtered
algebra homomorphism whose associated graded map is $B'\rightarrow A'$. Then there is a unique filtered algebra homomorphism $\C[\param]^W\rightarrow B'$
with the following properties:
\begin{enumerate}
\item The associated graded of this homomorphism comes from the universal property of
$\C[X_{\param}]^W/\C[\param]^W$ (see Proposition \ref{Prop:X_univ_graded} and
Corollary \ref{Cor:X_univ_deform}).
\item We have a $B'$-linear isomorphism $B'\otimes_{\C[\param]^W}\A_{\param}^W\xrightarrow{\sim}
\A'$ of filtered quantizations of $A'$.
\end{enumerate}
\end{Prop}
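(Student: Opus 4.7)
The strategy is to first settle the classical (associated graded) question by invoking Corollary \ref{Cor:X_univ_deform}, and then to lift the resulting data to the quantum level. Applied to $(B', A')$, Corollary \ref{Cor:X_univ_deform} produces a unique graded algebra homomorphism $\bar\phi : \C[\param]^W \to B'$ together with a unique $B'$-linear graded Poisson isomorphism $\bar\psi : B' \otimes_{\C[\param]^W} \C[X_{\param}]^W \xrightarrow{\sim} A'$, compatible with the given identifications modulo the augmentation ideals. The filtered homomorphism $\phi$ I wish to construct must have $\bar\phi$ as its associated graded, which automatically verifies part~(1) of the statement.

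For the quantum lift I would exploit that $\C[\param]^W = \C[f_1,\ldots,f_r]$ is a polynomial algebra on homogeneous generators $f_i$ (by Chevalley--Shephard--Todd, since $W$ acts on $\param$ as a reflection group). Any filtered lift $\phi$ of $\bar\phi$ amounts to a choice of $\phi(f_i) \in B'_{\leq \deg f_i}$ with $\gr\phi(f_i) = \bar\phi(f_i)$; the set of such lifts is a torsor over $\prod_i B'_{<\deg f_i}$. For each $\phi$, the base change $\A_\phi := B' \otimes_{\C[\param]^W, \phi} \A_{\param}^W$ is a filtered $B'$-algebra quantization of $A'$ by Proposition \ref{Prop:quant_X}(1). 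The quantum claim to establish is therefore: there is a unique $\phi$ such that $\A_\phi \cong \A'$ as filtered $B'$-algebra quantizations of $A'$, and the isomorphism is then itself unique.

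I would deduce this from a relative version of Theorem \ref{Thm:quant_classif}: filtered $B'$-algebra quantizations of $A'$ (with classical structure fixed by $\bar\psi$) are classified, up to $B'$-linear filtered isomorphism, precisely by the affine space of filtered lifts of $\bar\phi$. The proof of this parametric classification mirrors the absolute case: construct a period map for quantizations of a $\Q$-terminalization of $\Spec_{B'}(A')$ via the relative analog of Proposition \ref{Prop:BK_quant} in families over $\Spec B'$, descend to the singular affine base by global sections using the vanishing in Proposition \ref{Prop:coh_vanish}, and pass to $W$-invariants.

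The main obstacle is establishing this relative classification, namely extending the period-map construction and the attendant cohomology vanishing to families over an arbitrary positively graded base $B'$, while keeping track of compatibility with base change along $\bar\phi$. Once it is in hand, both (2) and the uniqueness assertions follow: existence because $\A'$ is one of the parametrized quantizations, and uniqueness because distinct filtered lifts of $\bar\phi$ yield non-isomorphic quantizations, so $\phi$ is forced by the requirement $\A_\phi \cong \A'$ and $\psi$ is determined up to the evident trivial ambiguity.
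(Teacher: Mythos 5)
Your opening move---invoking Corollary \ref{Cor:X_univ_deform} at the Poisson level to get the graded map $\bar\phi:\C[\param]^W\to B'$ and then reducing the problem to choosing a filtered lift $\phi$ of $\bar\phi$---is sound and roughly matches the way the proof must be organized. The difficulty is that the entire weight of the quantum step rests on a ``relative version of Theorem \ref{Thm:quant_classif},'' which you never establish; you flag it yourself as ``the main obstacle.'' This is a genuine gap, and a structurally dangerous one: in the paper, Theorem \ref{Thm:quant_classif} has no independent proof and is explicitly noted to be a \emph{corollary} of the very Proposition \ref{Prop:quant_univ_property} you are trying to prove, so proposing to ``mirror the absolute case'' comes close to circularity. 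Your sketched route through a relative period map is also much harder than it looks: $B'$ is an arbitrary finitely generated positively graded algebra, possibly non-smooth and non-reduced, while the period-map technology cited in the paper (\cite{KV}, \cite{BK_quant}, and \cite[Section 2.3]{quant_iso}) is developed for a single smooth symplectic variety, not for families over such a base. You would additionally need a $\Q$-terminalization of $\Spec_{B'}(A')$ as a family over $\Spec B'$ and a relative form of Proposition \ref{Prop:coh_vanish}; none of this is in the paper's toolkit.

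The paper's proof takes a genuinely different and more algebraic route that avoids period maps in the quantum step entirely. Sections \ref{SS_scheme_Y}--\ref{SS_Y_str} build a finite-type affine scheme $Y$ parameterizing ``deformation data'' $(*,\langle\cdot,\cdot\rangle)$ on $\C[X]$, equipped with an action of $\C^\times\ltimes U$ (the unipotent $U$ absorbing the choice of generating map), and a universal algebra $\A_Y$ over $\C[Y]$. Proposition \ref{Prop:univer_AY} gives a universal property of $\A_Y$; the uniqueness part depends on Proposition \ref{Prop:Poisson_der} (no negative-degree Poisson derivations), which is precisely why that technical hypothesis on the grading appears. Proposition \ref{Cor:AY0_iso} then identifies the fiber $Y_0$ over $z=0$ with $U\times\param/W$ using only Namikawa's Poisson classification, and Lemma \ref{Lem:Struct_Y} bootstraps this to an isomorphism $Y'=\Spec(\C[Y][\hbar]/(z-\hbar^d))\cong U\times\param/W\times\C$ via a positivity-of-grading argument: a graded homomorphism $\C[Y']\to\C[U]\otimes\C[\param]^W[\hbar]$ that is an isomorphism modulo $\hbar$ between positively graded algebras is an isomorphism. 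Proposition \ref{Prop:quant_univ_property} is then an immediate consequence of Proposition \ref{Prop:univer_AY} together with Lemma \ref{Lem:Struct_Y}. In short, the paper transports the classification from the Poisson to the quantum side at the level of the parameter scheme $Y$, so the relative classification you wanted to assume is obtained for free rather than assumed; to make your proposal work you would in effect have to invent this construction or an equivalent of it.
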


Proposition \ref{Prop:quant_univ_property} follows from Proposition
\ref{Prop:univer_AY} and Lemma \ref{Lem:Struct_Y} that will be stated and proved later.

Our last task for this section is to explain how we can recover the parameter $W\lambda$
from a filtered quantization $\A_\lambda$. In order to do this, we will need to recall
the construction and properties of  quantum slice algebras following \cite[Section 3.2]{perv}.

Let $x\in \mathcal{L}_i$. Then we can form the completions $\C[X]^{\wedge_x}, \C[T_x\mathcal{L}_i]^{\wedge_0}, \C[\Sigma_i]^{\wedge_0}$. By \cite[Theorem 2.3]{Kaledin_symplectic},
we have an isomorphism
\begin{equation}\label{eq:classical_decomposition}
\C[X]^{\wedge_x}\cong \C[T_x\mathcal{L}_i]^{\wedge_0}\widehat{\otimes}
\C[\Sigma_i]^{\wedge_0}.
\end{equation}
One can lift (\ref{eq:classical_decomposition}) to a quantum level.
Form the Rees algebra $R_\hbar(\A_\lambda)$
of $\A_\lambda$ and consider the completion  $R_\hbar(\A_\lambda)^{\wedge_x}$ at the maximal
ideal of $R_\hbar(\A_\lambda)$ pulled back from the maximal ideal of $x$ in $\C[X]$ under
$R_\hbar(\A_\lambda)\twoheadrightarrow \C[X]$.  Then, by \cite[Lemma 3.3]{perv}, we have
the following decomposition
\begin{equation}\label{eq:quantum_decomposition}
R_\hbar(\A_\lambda)^{\wedge_x}\cong
\mathbb{A}_{\hbar}(T_x\mathcal{L}_i)^{\wedge_0}\widehat{\otimes}_{\C[[\hbar]]}
R_\hbar(\A_{\lambda_i}^i)^{\wedge_0},
\end{equation}
lifting (\ref{eq:classical_decomposition}),
where $\mathbb{A}_\hbar$ stands for the homogenized Weyl algebra of a symplectic vector space.
%As argued in Step 2 of the proof of \cite[Proposition 6.6.1]{quant_iso}, (\ref{eq:quantum_decomposition})
%determines $\A_{\lambda_i}^i$ uniquely as a filtered quantization of $\C[\Sigma_i]$.

\begin{Prop}\label{Rem:quant_param}
In a quantization parameter $\lambda=(\lambda_0,\ldots,\lambda_k)$, the component $\lambda_0$ is the period of the quantization of $X^{reg}$ given by restricting  $\A_\lambda$. The orbit  $W_i\lambda_i$ is uniquely recovered from the formal quantization $R_\hbar(\A^i_{\lambda_i})^{\wedge_0}$ of $\C[\Sigma_i]^{\wedge_0}$.
\end{Prop}
\begin{proof}
The claim about $\lambda_0$
is a consequence of the following general claim: if $X'$ is such as in Proposition
\ref{Prop:BK_quant}, $U\subset X'$ is an open $\C^\times$-stable subvariety and $\mathcal{D}'$
is the filtered quantization of $X'$, then the period of the restriction $\mathcal{D}'|_U$
is the pullback of the period of $\mathcal{D}'$ to $U$. This general claim is a direct consequence of
the construction
of the period in \cite[Section 4]{BK_quant}. To get the characterization of $\lambda_0$
we notice that the restriction of $\A_\lambda$ to $X^{reg}$ coincides with the restriction
of the quantization $\mathcal{D}'_\lambda$ of $\tilde{X}^{reg}$ to $X^{reg}$. This is because the
morphism $\tilde{X}\rightarrow X$ is an isomorphism over $X^{reg}$.

Now we prove the claim about $W_i\lambda_i$ for $i>0$. The proof is in two steps.

{\it Step 1}.
First of all, we claim that if
$R_\hbar(\A^i_{\lambda_i})^{\wedge_0}$ and $R_\hbar(\A^i_{\lambda_i'})^{\wedge_0}$ are isomorphic as formal
quantizations of $\C[\Sigma_i']$, then $\A^i_{\lambda_i}$ and $\A^i_{\lambda_i'}$ are isomorphic
as filtered quantizations of $\C[\Sigma_i]$. This will follow if we  prove the following claim
\begin{itemize}
\item
any $\C[[\hbar]]$-algebra isomorphism $R_\hbar(\A^i_{\lambda_i})^{\wedge_0}\xrightarrow{\sim}R_\hbar(\A^i_{\lambda_i'})^{\wedge_0}$ that is the identity modulo $\hbar$
is the composition of a $\C[[\hbar]]$-algebra automorphism, say $\alpha$, of
$R_\hbar(\A^i_{\lambda_i'})^{\wedge_0}$ that is the identity modulo $\hbar$ and a
$\C[[\hbar]]$-algebra isomorphism
$R_\hbar(\A^i_{\lambda_i})^{\wedge_0}\xrightarrow{\sim} R_\hbar(\A^i_{\lambda_i'})^{\wedge_0}$
intertwining the $\C^\times$-actions.
\end{itemize}

Consider the group $\mathcal{G}$ of algebra automorphisms of
$R_\hbar(\A^i_{\lambda'_i})^{\wedge_0}$ that rescale $\hbar$. This is a pro-algebraic group.
As such it is the semidirect product of a reductive algebraic group and pro-unipotent pro-algebraic
group. The projection from the reductive part to the automorphism group of $\C[\Sigma_i]'$
is injective. It follows that every two embeddings of $\C^\times$ into $\mathcal{G}$ that give
the dilation action on $\Sigma'_i$ are conjugate by an element that is the identity on $\Sigma_i'$.
We apply this to the $\C^\times$-actions on $R_\hbar(\A^i_{\lambda_i'})^{\wedge_0}$
coming from $R_\hbar(\A^i_{\lambda_i}),R_\hbar(\A^i_{\lambda'_i})$ and let $\alpha$ be a conjugating
element. This establishes the claim above.

{\it Step 2}.
Now suppose $\A^i_{\lambda_i}\xrightarrow{\sim} \A^i_{\lambda_i'}$, an isomorphism of filtered quantizations (meaning that the associated graded isomorphism intertwines the isomorphisms with $\C[\Sigma_i]$). By  Theorem \ref{Thm:quant_classif},
we have $\widehat{W}_i\lambda_i'=\widehat{W}_i\lambda_i$. Recall, Section
\ref{SS_deform_Q_fact}, that $\widehat{W}_i$ is the Weyl group
of the same ADE type as $\Sigma_i$ and $W_i=\widehat{W}_i^\Xi$ for a suitable group $\Xi$ of diagram
automorphisms. So it is enough to show that for $\lambda_i,\lambda_i'\in \widehat{\param}_i^\Xi$, the equality $\widehat{W}_i\lambda_i'=\widehat{W}_i\lambda_i$ implies $W_i\lambda_i'=W_i\lambda_i$.

Pick fundamental weights $\varpi_j, j\in J,$ for $\widehat{W}_i$ and let $C$ denote the
Weyl chamber for $\widehat{W}_i$ spanned by the weights $\varpi_j$. The group $\Xi$ acts on $J$. Then $C^\Xi=C\cap \widehat{\param}_i^\Xi$ is spanned by $\sum_{k\in \Xi j}\varpi_j$.
This is a fundamental chamber for $W_i=\widehat{W}_i^\Xi$.
Note that the locus  $\{\lambda_i\in \widehat{\param}_i^{\Xi}| \widehat{W}_i\lambda_i \cap
\widehat{\param}_i^{\Xi}=W_i\lambda_i\}$ is the union of finitely many vector subspaces
defined over $\mathbb{Q}$. So in the proof it is enough to assume that $\lambda_i,\lambda_i'$
are real. Conjugating them by elements of $\widehat{W}_i^\Xi$ we can assume that
they lie   in $C^\Xi$, hence in $C$. Since they are $\widehat{W}_i$-conjugate, they must coincide.
\end{proof}

\subsection{Scheme $Y$}\label{SS_scheme_Y}
We start by constructing a finite type affine scheme $Y$ over $\C$ together with an action of
$\C^\times\ltimes U$, where $U$ is a unipotent group.
This scheme will, in a sense, parameterize deformations of $\C[X]$ compatible with the Poisson bracket,
and the group action will correspond to isomorphisms of deformations.

\begin{defi}\label{defi_def_data}
Let $R$ be a finitely generated commutative $\C$-algebra with $1$.
By a {\it deformation datum} (over $R$) on $R[X]$ we mean a pair $(*,\langle\cdot,\cdot\rangle)$ of $R$-bilinear
maps  $R[X]\otimes_R R[X]\rightarrow R[X]$ satisfying the following condition:
\begin{itemize}
\item[(i)] $*$ is an associative product such that $1\in R[X]$ is a unit and
$f*g-fg\in \bigoplus_{k< i+j}R[X]_k$ for any $i,j$ and $f\in R[X]_i,g\in R[X]_j$
(here, as usual, the subscript stands for the graded component of that degree).
\item[(ii)] $\langle\cdot,\cdot\rangle$ is a skew-symmetric bracket on $R[X]$ such that $\langle f,g\rangle-\{f,g\}\in
\bigoplus_{k<i+j-d}R[X]_k$ for $i,j,f,g$ as in (i).
\item[(iii)] There is $z\in R$ such that $f*g-g*f=z\langle f,g\rangle$. Note that (ii) implies that $z$ is unique.
\item[(iv)] We have $\langle f*g, h\rangle=f*\langle g,h\rangle+\langle f,h\rangle*g$ for all $f,g,h\in \C[X]$
(the Leibniz identity) and also the Jacobi identity for $\langle\cdot,\cdot\rangle$.
\end{itemize}
\end{defi}

%The set of deformation data $(*,\langle\cdot,\cdot\rangle)$ will be denoted by $\mathbb{Y}$.

Clearly, if $R=\C$ and  $z=0$, then $*$ is a commutative product and $\langle\cdot,\cdot\rangle$
is a Poisson bracket so that $(\C[X],*,\langle\cdot,\cdot\rangle)$ defines a filtered
Poisson deformation of $\C[X]$. If, on the other hand, $z=1$, then $\langle\cdot,\cdot\rangle$
is recovered from $*$, and $(\C[X],*)$ is a filtered quantization of $\C[X]$.

\begin{defi}\label{defi:def_data_iso}
By an isomorphism of deformation data $(*,\langle\cdot,\cdot\rangle), (*',\langle\cdot,\cdot\rangle')$
we mean an $R$-linear map $\varphi:R[X]\rightarrow R[X]$ with the following properties:
\begin{itemize}
\item[(I)] $\varphi(f)-f\in \bigoplus_{i<k}R[X]_i$ for any $k$ and $f\in R[X]_k$.
\item[(II)] $\varphi$ intertwines $*$ with $*'$, as well as $\langle\cdot,\cdot\rangle$
with $\langle\cdot,\cdot\rangle'$.
\end{itemize}
\end{defi}

Clearly, isomorphic deformation data  correspond to isomorphic
filtered Poisson deformations (for $z=0$) and quantizations (for $z=1$).

Now we proceed to constructing $Y$. For $n\in \Z_{\geqslant 0}$, set
$V_{\leqslant n}:=\bigoplus_{i=0}^n \C[X]_i$.
Let $f_1,\ldots,f_k$ be a minimal set of homogeneous
generators of $\C[X]$ and let $m$ be the maximum of the degrees of the
generators $f_i$.  Further, let
$G_1,\ldots,G_\ell$ be a minimal set of homogeneous (with respect to
the grading on $\C[X]$) relations between the generators $f_1,\ldots,f_k$.
Consider $\deg G_i$, the degree of $G_i$ with respect to  $f_1,\ldots,f_k$ and set
$$e:=1+\max(2,\deg G_1,\ldots, \deg G_\ell).$$ Set $T:=\Hom(\bigoplus_{i=1}^e
V_{\leqslant m}^{\otimes i},V_{\leqslant me})\oplus \Hom(V_{\leqslant m}^{\otimes 2},
V_{\leqslant 2m-d})$. A deformation pair defines an element $(\alpha,\beta)$ of $T$,
where $\alpha$ comes from the iterated product $*$ restricted to $V_{\leqslant m}$ and
$\beta$ comes from the bracket $\langle\cdot,\cdot\rangle$ restricted to $V_{\leqslant m}$.

We will realize $Y$ as a closed subscheme in $T$. First, note that conditions
(i)-(iii) give polynomial equations on $T$. For example, by (ii) we have
$\langle\cdot,\cdot\rangle\neq 0$. Then (iii) means that, for $f,g\in V_{\leqslant m}$,
the elements $f*g-g*f$ and
$\langle f,g\rangle$ are proportional, which results in polynomial
equations on $\alpha,\beta$. Let $Y^1$ denote the
subscheme defined by these polynomial equations. Note that, by the construction,
$z$ can be viewed as an element of $\C[Y^1]$. Indeed, both $(f,g)\mapsto f*g-g*f$
and $(f,g)\mapsto \langle f,g\rangle$ give linear maps $V_{\leqslant m}^{\otimes 2}\mapsto
V_{\leqslant 2m-d}$. The locus, say $Z$, in $\operatorname{Hom}_\C(V_{\leqslant m}^{\otimes 2},
V_{\leqslant 2m-d})^2$, where two linear maps, say $\varphi_1,\varphi_2$, are proportional
is an algebraic subscheme, on its open subscheme $Z^0$ where $\varphi_2\neq 0$, the ratio
$\varphi_1/\varphi_2$ is a regular function. Now, by the construction, we  have a morphism $Y^1\rightarrow Z^0$, the element $z\in \C[Y^1]$ is the pullback of the regular function $\varphi_1/\varphi_2$
(note that $\beta$ is nonzero thanks to (ii) in Definition \ref{defi_def_data}).

Now consider a finite type commutative algebra $R$ and an algebra homomorphism $(\alpha,\beta):\C[Y^1]\rightarrow R$
(where the meaning of $\alpha,\beta$ as before: $\alpha$ corresponds to $*$ and $\beta$
corresponds to $\langle\cdot,\cdot\rangle$).
We  construct the unital associative algebra
\begin{equation}\label{eq:A_alpha_defn}
\A_{(\alpha,\beta)}:=\left(R\otimes T(V_{\leqslant me})\right)/(x-\alpha(x), \alpha(y)-\alpha^{opp}(y)-z\beta(y)),
\end{equation}
where $x$ runs over $\bigoplus_{i=0}^e V_{\leqslant m}^{\otimes i}$,
$y$ runs over $V_{\leqslant m}^{\otimes 2}$ and we write $\alpha^{opp}$
for $\alpha|_{V_{\leqslant m}^{\otimes 2}}\circ \sigma$, where $\sigma$
is the permutation of tensor factors.

The $R$-algebra
$\A_{(\alpha,\beta)}$ comes with a filtration induced from $V_{\leqslant me}$ and
$\deg R=0$. We have a natural epimorphism
\begin{equation}\label{eq:graded_epimorphism}
R\otimes \C[X]\rightarrow \gr \A_{(\alpha,\beta)}.
\end{equation}

\begin{Lem}\label{Lem:alpha_polynom}
The condition that (\ref{eq:graded_epimorphism}) is an isomorphism is equivalent to a system of polynomial
equations on $\alpha,\beta$. These equations are independent of $R$.
\end{Lem}
\begin{proof}
The condition that (\ref{eq:graded_epimorphism}) is an isomorphism is equivalent to
\begin{itemize}
\item[(*)] For each $j\geqslant 0$, the filtered piece $(\A_{(\alpha,\beta)})_{\leqslant j}$
is a free $R$-module of rank $\dim V_{\leqslant j}$. Note that $(\A_{(\alpha,\beta)})_{\leqslant j}$ is automatically the quotient of a free $R$-module of  rank $\dim V_{\leqslant j}$ -- because
(\ref{eq:graded_epimorphism}) is an epimorphism.
\end{itemize}

Let $I$ denote the kernel of $R\otimes T(V_{\leqslant me})\rightarrow
\A_{(\alpha,\beta)}$. The ideal $I$ comes with the $R$-module map
$$R\otimes\left((\bigoplus_{i=0}^e V_{\leqslant m}^{\otimes i})\oplus V_{\leqslant m}^{\otimes 2}\right)
\rightarrow R\otimes T(V_{\leqslant me}), (x,y)\mapsto x-\alpha(x)+ \alpha(y)-\alpha^{opp}(y)-z\beta(y).$$
The image generates $I$ as an $R\otimes T(V_{\leqslant me})$-bimodule.
Consider the filtration on the $R\otimes T(V_{\leqslant me})$-bimodule $I$ induced
from the natural filtration on  $R\otimes\left((\bigoplus_{i=0}^e V_{\leqslant m}^{\otimes i})\oplus V_{\leqslant m}^{\otimes 2}\right)$.
The associated graded of $I$ contains the ideal of relations
of $R\otimes \C[X]$ (including the commutativity relations).
The condition (*) is equivalent to the following condition
\begin{itemize}
\item[(**)] For each $j\geqslant 0$, we have that
$R\otimes T(V_{\leqslant me})_{\leqslant j}/I_{\leqslant j}$ is a free
module of rank $\dim V_{\leqslant j}$ (while, a priori, the former is the quotient
of a free module of the given rank).
\end{itemize}

The quotient $R\otimes T(V_{\leqslant me})_{\leqslant j}/I_{\leqslant j}$
is the cokernel of a matrix whose entries are polynomials in the entries of $\alpha$ and $\beta$.
For an arbitrary pair $(\alpha,\beta)$, the matrix is such that  the cokernel is the quotient of
a free $R$-module of rank $\dim V_{\leqslant j}$.
So the claim that the cokernel is a free module of that rank is equivalent to the vanishing
of all minors of a specified size. This gives polynomial conditions on $(\alpha,\beta)$
that are independent of $R$.
\end{proof}

Let $Y^2\subset Y^1$ denote the (scheme-theoretic) vanishing locus of polynomial equations from Lemma
\ref{Lem:alpha_polynom}. For any algebra homomorphism $(\alpha,\beta): \C[Y^2]\rightarrow R$
we get the filtered associative algebra $\A_{(\alpha,\beta)}$ with $R\otimes \C[X]
\xrightarrow{\sim} \gr A_{(\alpha,\beta)}$. There is at most one bracket $\langle\cdot,\cdot\rangle$
on $\A_{(\alpha,\beta)}$ satisfying (iv) whose pullback to $V_{\leqslant m}^{\otimes 2}$
coincides with $\beta$. Such a bracket then automatically satisfies (ii) and (iii).
Note that (iv) translates into a collection on polynomial equations on $\alpha$
and $\beta$. Let $Y\subset Y^2$ denote the closed subscheme defined by these equations.

By the construction, $Y$ represents the functor
of taking deformation data:  to give a deformation datum over $R$ is
the same thing as to give an $R$-point of $Y$. Denote this set of points
by $Y(R)$.

%This is a filtered algebra (with $R$ in degree $0$) that comes together with a natural epimorphism %$R\otimes \C[X]\twoheadrightarrow \gr\A_\alpha$ of graded algebras. The condition that this epimorphism
%is an isomorphism gives polynomial equations on $\alpha$. Define
%$Y$ as the subscheme of $\tilde{Y}$ defined by these equations.
%Note that $z$ in (iv) may be regarded as an element of $\C[\tilde{Y}]$
%and hence as an element of $\C[Y]$.

Now we proceed to group actions on $Y$. Define the unipotent group $U$. We take the subgroup of
$\GL(V_{\leqslant me})$ consisting of all linear maps $\Phi:V_{\leqslant me}\rightarrow V_{\leqslant me}$
with $\deg (\Phi(f)-f)<\deg f$ for all $f\in V_{\leqslant me}$. We have an induced action on $T$
that preserves the defining ideal of $Y$ as well as $z$.

Also define an action of $\C^\times$ on $V_{\leqslant me}$ by $t.f:=t^{-\deg f}f$ for a homogeneous
element $f\in V_{\leqslant me}$. Then $\C^\times$ normalizes $U\subset \GL(V_{\leqslant me})$ and also
preserves the ideal of $Y$. Recall  that $z$ can be viewed as an element of $\C[Y^1]$. Hence $z$
defines an element of $\C[Y]$ (which is a quotient of $\C[Y^1]$), equivalently, a morphism
$Y\rightarrow \C$.

Note that, by the construction, we have the following property:
\begin{itemize}
\item[($\spadesuit$)]  The morphism
$Y\rightarrow \C$ given by $z$  is $U$-invariant and $\C^\times$-equivariant, $t\in \C^\times$
acts on $\C$ by multiplication by $t^{-d}$. Moreover, $\C[Y],\C[U]$ are positively graded
with respect to the $\C^\times$-action.
\end{itemize}

We write $U(R)$ for the group of $R$-points of $U$. Note that $U(R)$ acts on $Y(R)$.
We write $U_{\Spec(R)}$ for the constant group scheme over $\Spec(R)$ with fiber
$U$. So $U(R)$ is the group of sections $\operatorname{Spec}(R)\hookrightarrow U_{\operatorname{Spec}(R)}$.

\subsection{Generating maps and the algebras $\A_{Y},\A_{\mathbb{Y}}$}\label{SS_A_Y}
Now we need another concept: that of a generating map.
Let $B$ be a finitely generated commutative $\C$-algebra (but not, a priori, a $\C[Y]$-algebra)
and let $\A_B$ be a filtered $B$-algebra with
\begin{itemize}
\item $B$ in degree $0$,
\item an isomorphism $\gr \A_B\cong B\otimes \C[X]$ of graded Poisson algebras.
\item a $B$-linear bracket $\langle \cdot,\cdot\rangle$ satisfying (ii)-(iv) of Definition
\ref{defi_def_data} for
some $z\in B$,
\end{itemize}

%that is free as a graded $B$-module and is equipped
%with a degree $-d$ Lie bracket $\langle\cdot,\cdot\rangle$ satisfying (iii) above and such that
%$fg-gf=z'\langle f,g\rangle$ for degree $d$ element $z'\in B$. We assume further that $\A_B\otimes_B \C$ is identified %with $\C[X]$ as a graded algebra such that $\langle\cdot,\cdot\rangle$ gets specialized to $\{\cdot,\cdot\}$.
\begin{defi}
By a {\it generating map} for $\A_B$ we mean a filtered $B$-module map $B\otimes V_{\leqslant me}\rightarrow \A_B$
that becomes  the inclusion $B\otimes V_{\leqslant me}\rightarrow B\otimes \C[X]$
after passing to the  associated graded algebra.
\end{defi}
If $B'$ is a $B$-algebra, then $\A_{B'}:=B'\otimes_B \A_B$ inherits a generating map
from $\A_B$.

Note that any generating map is injective and the image of $B\otimes V_{\leqslant m}$ generates $\A_B$ (hence the name). Denote the set of generating maps by $\mathsf{Gen}(\A_B)$. It comes with a
$U(B)$-action. There is at least one generating map, and if we fix it, we get a $U(B)$-equivariant bijection
$U(B)\xrightarrow{\sim} \mathsf{Gen}(\A_B)$.

Let us explain a connection between generating maps and deformation data.
Every generating map for $\A_B$ gives rise to a deformation datum over $B$: by restricting
the product and the bracket from $\A_B$ to $B\otimes V_{\leqslant me}$. So we get a  map
of sets
\begin{equation}\label{eq:torsor_map}
\mathsf{Gen}(\A_B)\rightarrow Y(B).\end{equation}
This map is $U(B)$-equivariant.

We also need a certain automorphism group. The space $B\otimes (\bigoplus_{i=0}^{d-1}\C[X]_i)$ is a nilpotent Lie subalgebra
of $\A_B$ with respect to $\langle\cdot,\cdot\rangle$. So we can
consider the Lie algebra $$\mathfrak{h}_B:=(B\otimes (\bigoplus_{i=0}^{d-1}\C[X]_i))/B\otimes \C[X]_0$$
($\mathfrak{h}$ for Hamiltonian).
Consider the corresponding unipotent group scheme $H_{\operatorname{Spec}(B)}$
over $\operatorname{Spec}(B)$. Note that we have a group scheme monomorphism
$H_{\Spec(B)}\hookrightarrow U_{\Spec(B)}$ via $\exp(a)\mapsto \exp(\langle a,\cdot\rangle)$ for $a\in \mathfrak{h}_{B}$.

The following lemma explains the meaning of $H_{\operatorname{Spec(B)}}$.

\begin{Lem}\label{Lem:U0_B}
The group scheme over $\Spec(B)$ of filtered $B$-algebra automorphisms of $\A_B$ that are the identity on
$\gr \A_B$ coincides with $H_{\Spec(B)}$.
\end{Lem}
\begin{proof}
Let $\psi$ denote an automorphism of $\A_B$ as in the statement of the lemma.
Then $\ln(\psi)$ is well-defined and is a $B$-linear derivation of $\A_B$
that is zero on $\gr\A_B$. We need to show that $\ln(\psi)=
\langle a,\cdot\rangle$ for $a\in \mathfrak{h}_B$. Note that $\ln(\psi)$
gives rise to a homogeneous negative degree Poisson $B$-linear derivation
of $\gr\A_B=B\otimes \C[X]$, the top degree term of $\ln(\psi)$.  Proposition
\ref{Prop:Poisson_der} implies that such a derivation is Hamiltonian. Since
the Poisson center of $B\otimes \C[X]$ is $B$, we see that the top degree
term of $\ln(\psi)$ takes the form $\{f,\cdot\}$ for  a unique homogeneous element
$f\in B\otimes \bigoplus_{i=1}^{d-1}\C[X]_i$. This implies the analogous statement
for $\ln(\psi)$ itself finishing the proof of the lemma.
\end{proof}

Now let $R$ be a finitely generated $\C[Y]$-algebra, let us write
$(\alpha,\beta)$ for the corresponding homomorphism $\C[Y]\rightarrow R$.
We get the algebra  $\A_{(\alpha,\beta)}$ defined in
(\ref{eq:A_alpha_defn}).
The algebra corresponding to the identity automorphism of  $\C[Y]$ will
be denoted by $\A_{Y}$. Note that the algebra $\A_Y$ is graded and comes
with an action of $U$
by $\C$-algebra automorphisms that is compatible with the action of
$U$ on $\C[Y]$.
Also $\A_{Y}$ admits a $\Z_{\geqslant 0}$-filtration
with $\C[Y]$ in degree $0$ such that $\gr\A_{Y}\cong \C[Y]\otimes \C[X]$ as a graded Poisson
algebra (the Poisson  bracket on $\gr\A_Y$ comes from $\langle\cdot,\cdot\rangle$).
Note that this filtration does not come from the grading on $\A_{Y}$.

Every algebra $\A_{(\alpha,\beta)}$ defined by
(\ref{eq:A_alpha_defn}) comes with a canonical generating map that gives the deformation
datum specified by $(\alpha,\beta)$.
This deformation map gives an identification $\mathsf{Gen}(\A_{\alpha,\beta})\cong U(R)$, hence
a map $U(R)\rightarrow Y(R)$. This map is functorial in $R$ and hence comes
from a morphism of schemes $U_Y=U\times Y\rightarrow Y\times Y$. This morphism
is nothing else but the graph of the action of $U$ on $Y$. So the preimage of the
diagonal is  the automorphism group scheme $H_Y$.
To simplify the notation below we are going to denote the preimage of the diagonal
by $\mathbb{Y}$. Note that, by construction, $\mathbb{Y}\subset U\times Y$
is $U$-stable for the action given by $u.(u_1,y)=(uu_1u^{-1},uy)$ and $\C^\times$-stable.

We write $\A_{\mathbb{Y}}$ for the algebra $\C[\mathbb{Y}]\otimes_{\C[Y]}\A_Y$.
This algebra comes with a generating map. Namely, consider the diagonal action of
$U$ on the $\C[U\times Y]$-algebra $\C[U]\otimes \A_Y$.
There is a unique generating map for this algebra
that is $U\times U$-equivariant, where the first action of $U$ on itself from the left and
the second action is diagonal,
and whose fiber over $1\in U$ is the natural generating map
for $\A_Y$. Then we get a generating map for $\A_{\mathbb{Y}}$ by base change from $\C[U\times Y]$
to its quotient $\C[\mathbb{Y}]$. Note that this generating map is $U$-equivariant. A way to think about
this generating map is that at the points of the diagonal $\{1\}\times Y\subset \mathbb{Y}$
we get the natural generating map for $\A_Y$ and then we extend this map by the action of $H_Y$.
The following is a universal property of $\A_{\mathbb{Y}}$ and its generating map.

\begin{Prop}\label{Prop:univer_AY}
Let $\A_B$ be as in the beginning of the section. Fix a generating map for $\A_B$.
Then there is a unique  algebra homomorphism $\C[\mathbb{Y}]\rightarrow B$ and a
unique filtered $B$-algebra isomorphism
$$\A_B\xrightarrow{\sim} B\otimes_{\C[\mathbb{Y}]}\A_{\mathbb{Y}}$$  that intertwines the
brackets, the generating maps, and the isomorphisms $\gr \A_B, \gr (B\otimes_{\C[\mathbb{Y}]}\A_{\mathbb{Y}})
\xrightarrow{\sim}B\otimes \C[X]$.
\end{Prop}
\begin{proof}
As was mentioned before, the choice of a generating map for $\A_B$ gives
rise to a point in $Y(B)$, or, equivalently, an algebra homomorphism
$\C[Y]\rightarrow B$. This homomorphism equips $\A_B$ with another generating
map, which is pulled back from $\A_Y$.
This new map does  not need to coincide with the initial one, but gives the same deformation
datum. Since $\mathsf{Gen}(\A_B)$ is a torsor over $U(B)$,  a choice of two generating
maps for $\A_B$ gives a morphism $\operatorname{Spec}(B)\rightarrow U_Y$.
Since the generating maps give the same deformation data, the morphism factors
through $\operatorname{Spec}(B)\rightarrow \mathbb{Y}$. By the construction,
we have a filtered algebra isomorphism $$\A_B\xrightarrow{\sim} B\otimes_{\C[\mathbb{Y}]}\A_{\mathbb{Y}}$$
with the required properties. It is unique: the only automorphism of $\A_B$
that fixes a generating map is the identity.
\end{proof}

\subsection{Structure of $\mathbb{Y}$}\label{SS_Y_str}
Our goal here is to describe the structure of $\mathbb{Y}$ (in fact, of its very close
relative) and use this to give a proof of Proposition \ref{Prop:quant_univ_property}.

Let $\A^W_{\param,\hbar}$ denote the Rees algebra of the filtered algebra $\A_{\param}^W$.
This is a graded algebra over $\C[\param]^W[\hbar]$, where $\C[\param]^W\subset \A_{\param,\hbar}^W$ is graded with $\param^*$
of degree $d$ and the degree of $\hbar$ is $1$.

\begin{Cor}\label{Cor:AY_Section}
There is a $\C^\times$-equivariant scheme morphism $\psi:\param/W\times \C\rightarrow \mathbb{Y}$
such that $\psi^*(z)=\hbar^d$ and there is a $\C[\param/W,\hbar]$-algebra isomorphism $\A^W_{\param,\hbar}\cong
\C[\param/W,\hbar]\otimes_{\C[\mathbb{Y}]}\A_{\mathbb{Y}}$.
\end{Cor}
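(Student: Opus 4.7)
The plan is to apply Proposition \ref{Prop:univer_AY} to $\A^W_{\param,\hbar}$, viewed as a $B$-algebra with $B:=\C[\param]^W[\hbar]=\C[\param/W\times\C]$. The algebra $B$ embeds centrally into $\A^W_{\param,\hbar}$ by sending $\hbar$ to the Rees parameter and $\xi\in\param^*$ to $\xi\cdot\hbar^d$, where $\xi$ is viewed as an element of $F_d\A^W_\param\subset\A^W_\param$. Since $[F_a\A^W_\param,F_b\A^W_\param]\subset F_{a+b-d}\A^W_\param$, commutators in the Rees algebra lie in $\hbar^d\A^W_{\param,\hbar}$, so the $B$-linear bracket $\langle a,b\rangle:=\hbar^{-d}[a,b]$ is well-defined on $\A^W_{\param,\hbar}$ and satisfies conditions (iii) (with $z=\hbar^d$) and (iv).

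Next I equip $\A^W_{\param,\hbar}$ with a $\Z_{\geqslant 0}$-filtration such that $B\subset F_0$ and $\gr\A^W_{\param,\hbar}\cong B\otimes\C[X]$ as graded Poisson algebras. By Proposition \ref{Prop:deform_X}(1), $\C[X_\param]^W$ is a free $\C[\param]^W$-module with fiber $\C[X]$ at $\param=0$, so I can choose a graded $\C$-linear section $V_{\leqslant me}\hookrightarrow\A^W_\param$ lifting the inclusion $V_{\leqslant me}\hookrightarrow\C[X]=\C[X_\param]^W/(\param^*)$. I transport this into $\A^W_{\param,\hbar}$ via $v\mapsto\tilde v\cdot\hbar^{\deg v}$ and let $F_n\A^W_{\param,\hbar}$ be the $B$-submodule generated by products of these lifts of total $\C[X]$-degree at most $n$. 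For the graded Poisson identification, any $\param^*$-deformation term appearing in $\tilde v_1\tilde v_2$ or $[\tilde v_1,\tilde v_2]$ produces, via the Rees embedding $\param^*\ni\xi\mapsto\xi\hbar^d$, a factor of $\hbar^{kd}$ that shifts the corresponding expression into a strictly lower $B$-module filtration degree; thus only the $k=0$ (classical) terms $(v_1v_2)_X$ and $\{v_1,v_2\}_X$ survive at the top of $\gr$, matching the trivial Poisson structure on $B\otimes\C[X]$ with $B$ Poisson central.

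Applying Proposition \ref{Prop:univer_AY} then yields a unique algebra homomorphism $\psi^*:\C[Y]\to B$, equivalently a scheme morphism $\psi:\param/W\times\C\to Y$, together with the unique $B$-algebra isomorphism $\A^W_{\param,\hbar}\cong B\otimes_{\C[Y]}\A_Y$. The equality $\psi^*(z)=\hbar^d$ is forced by the defining relation (iii): $z\in\C[Y]$ is characterized by $a*b-b*a=z\langle a,b\rangle$, and in $\A^W_{\param,\hbar}$ this identity holds with $z=\hbar^d$. Finally, $\C^\times$-equivariance of $\psi$ is automatic from naturality, since the Rees grading on $\A^W_{\param,\hbar}$ (with $\param^*$ in degree $d$ and $\hbar$ in degree $1$) is compatible with the $\C^\times$-action on $Y$ (with $z$ in degree $d$). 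The main technical step is constructing the filtration and verifying the graded Poisson isomorphism $\gr\A^W_{\param,\hbar}\cong B\otimes\C[X]$; once established, the rest follows cleanly from the universal property.
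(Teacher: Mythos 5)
Your proposal is correct and takes essentially the same route as the paper: the paper's own proof is a one-liner (``$\A^W_{\param,\hbar}$ comes with a generating map and the bracket $\langle\cdot,\cdot\rangle=[\cdot,\cdot]/\hbar^d$''), and you are simply spelling out the verification that Proposition \ref{Prop:univer_AY} applies, which is what the paper intends. One small imprecision worth noting: the phrase ``a factor of $\hbar^{kd}$ that shifts the corresponding expression into a strictly lower $B$-module filtration degree'' is misleading, since $\hbar\in B=F'_0$ and multiplying by $\hbar$ does not by itself lower the auxiliary filtration degree; the correct reason the $k\geqslant 1$ terms drop out is that the total grading on the Rees algebra is preserved, so the complementary factor accompanying $\xi\hbar^d$ (or $\hbar^j$) has strictly smaller $\C[X]$-degree and hence lies in a lower piece of the auxiliary filtration. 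Also, for the $\C^\times$-equivariance of $\psi$ you should make explicit that the generating map can be chosen $\C^\times$-equivariant (as the paper points out in the proof of Lemma \ref{Lem:Struct_Y}); equivariance of $\psi^*$ then follows from the uniqueness clause of Proposition \ref{Prop:univer_AY}.
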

\begin{proof}
This is because $\A^W_{\param,\hbar}$ comes with a generating map (as any other deformation).
This generating map can be chosen to be  $\C^\times$-equivariant. The bracket $\langle\cdot,\cdot\rangle$ equals $[\cdot,\cdot]/\hbar^d$. These observations together with Proposition
\ref{Prop:univer_AY} imply the claim of the corollary.
\end{proof}

%The group scheme $U^0_Y$ acts on $U_Y$ from the right. Our first goal is to produce
%a section for this action. Namely, take a $\C^\times$-stable vector subspace
%$\mathfrak{u}'\subset \mathfrak{u}$ with $\mathfrak{u}'\oplus \mathfrak{u}^0=\mathfrak{u}$
%(here $\mathfrak{u}^0:=\mathfrak{u}^0_{\{0\}}$). Let $U':=\exp(\mathfrak{u}')$
%and $U'_Y:=Y\times U'\hookrightarrow U_Y$.
%
%\begin{Lem}\label{Lem:quotient_structure}
%The multiplication morphism $U'_Y\times U^0_Y\rightarrow U_Y$
%is an isomorphism.
%\end{Lem}
%\begin{proof}
%Note that the multiplication map is $\C^\times$-equivariant. The action of
%$\C^\times$ on $U_Y$ is contracting. Therefore the action on
%$U'_Y\times U^0_Y$ is also contracting. So it is enough to prove
%that the multiplication is an isomorphism when specialized to $0$.
%There it follows directly from the definition.
%\end{proof}

Now we  describe $\mathbb{Y}_0$, the scheme theoretic fiber of $\mathbb{Y}$ at $z=0$.
Corollary \ref{Cor:AY_Section} yields an induced scheme morphism $\psi_0:\param/W\rightarrow \mathbb{Y}_0$.

\begin{Prop}\label{Cor:AY0_iso}
The $U$-equivariant morphism $U\times \param/W\rightarrow \mathbb{Y}_0$ extending $\psi_0$ is
an isomorphism.
\end{Prop}
\begin{proof}
The proof is in several steps.

{\it Step 1}. Consider the algebra $\C[U]\otimes \C[X_{\param}]^W$ over $\C[U]\otimes \C[\param]^W$.
It comes with a  generating map produced as follows. We pick a generating map
in the fiber over $1\in U$ and then extend it to $\C[U]\otimes \C[\param]^W\otimes V_{\leqslant me}
\rightarrow \C[U]\otimes \C[X_{\param}]^W$ so that corresponding scheme morphism
$U\times X_{\param}/W\rightarrow U\times \param/W\times V_{\leqslant me}^*$ is $U$-equivariant.
Here in the target $U$ acts by $u.(u',p,\alpha)=(uu',p,u.\alpha)$. Compare with the
description of the generating map for $\A_{\mathbb{Y}}$ in the previous section.

{\it Step 2}. Now let $B$ be a positively graded algebra and let $\A_B$ be a graded Poisson algebra
deforming $\C[X]$. Then we have a $\C^\times$-equivariant generating map $B\otimes V_{\leqslant me}\rightarrow \A_B$.
We claim that there is a unique  morphism of schemes $\operatorname{Spec}(B)\rightarrow
U\times \param/W$ and a Poisson algebra isomorphism $\A_B\xrightarrow{\sim} B\otimes_{\C[U]\otimes
\C[\param/W]}(\C[U]\otimes \C[X_{\param}]^W)$ intertwining the generating maps.

By the universal property of $\C[X_{\param}]^W$, see Proposition \ref{Prop:X_univ_graded},
there is a unique graded algebra homomorphism
$\C[\param]^W\rightarrow B$ and a graded Poisson $\C[B]$-algebra isomorphism
$\A_B\cong B\otimes_{\C[\param]^W}\C[X_{\param}]^W$ that is the identity modulo the augmentation
ideal in $B$.   Since the generating maps form a torsor over $U(B)$,
we further see that there is a unique homomorphism $\C[U]\otimes \C[\param]^W\rightarrow B$
and a unique isomorphism $\A_B\xrightarrow{\sim} B\otimes_{\C[U]\otimes
\C[\param/W]}(\C[U]\otimes \C[X_{\param}]^W)$ intertwining the generating maps.

{\it Step 3}. By Proposition \ref{Prop:univer_AY}, there is a unique scheme
morphism $\iota_1: U\times \param/W\rightarrow \mathbb{Y}_0$ and a unique
Poisson $\C[U\times \param/W]$-algebra isomorphism $\C[U\times \param/W]\otimes_{\C[\mathbb{Y}_0]}\A_{\mathbb{Y}_0}\rightarrow
\C[U]\otimes \C[X_{\param}]^W$ intertwining the generating maps. On the other hand,
$\A_{\mathbb{Y}_0}$ is positively  graded. So, by Step 2, there is a unique scheme morphism
$\iota_2:\param/W\times U\rightarrow \mathbb{Y}_0$ and a unique Poisson algebra isomorphism
$\A_{\mathbb{Y}_0}\xrightarrow{\sim}\C[\mathbb{Y}_0]\otimes_{\C[U\times \param/W]}(\C[U]\otimes \C[X_{\param}]^W)$
intertwining the generating maps. So the morphisms $\iota_1,\iota_2$ are mutually inverse.
\end{proof}

Now we will describe  the structure of a slight modification of $\mathbb{Y}$.
Set $$\mathbb{Y}':=\operatorname{Spec}(\C[\mathbb{Y}][\hbar]/(z-\hbar^d)).$$

\begin{Lem}\label{Lem:Struct_Y}
We have a $\C^\times\times U$-equivariant isomorphism $U\times \param/W\times \operatorname{Spec}(\C[\hbar])\xrightarrow{\sim}
\mathbb{Y}'$ of schemes over $\operatorname{Spec}(\C[\hbar])$. It gives rise to
a  $\C[\mathbb{Y}']$-linear and $U\times \C^\times$-linear algebra isomorphism
$\C[\hbar]\otimes_{\C[z]}\A_{\mathbb{Y}}\xrightarrow{\sim}\C[U]\otimes \A^W_{\param,\hbar}$.
\end{Lem}
\begin{proof}
Consider the algebra $\C[U]\otimes \A_{\param,\hbar}^W$. It comes with a generating map
defined as in Step 1 of the proof of Proposition \ref{Cor:AY0_iso}. This generating map
is $\C^\times$-equivariant by the construction. So we get a unique
graded algebra homomorphism $\C[\mathbb{Y}]\rightarrow \C[U]\otimes \C[\param]^W[\hbar]$
and a unique graded algebra isomorphism $$\C[U]\otimes\A_{\param,\hbar}^W
\xrightarrow{\sim} \left(\C[U]\otimes \C[\param]^W[\hbar]\right)\otimes_{\C[\mathbb{Y}]}\A_Y$$
that intertwines the brackets $\langle\cdot,\cdot\rangle$ and the generating  maps.
The homomorphism $\C[\mathbb{Y}]\rightarrow \C[U]\otimes \C[\param]^W[\hbar]$ maps
$z$ to $\hbar^d$ and so extends to a graded $\C[\hbar]$-algebra homomorphism
$\C[\mathbb{Y}']\rightarrow \C[U]\otimes \C[\param]^W[\hbar]$.  At $\hbar=0$, it specializes to
the isomorphism $\C[\mathbb{Y}_0]\xrightarrow{\sim} \C[U]\otimes \C[\param]^W$.   Note that both
$\C[\mathbb{Y}']$ and $\C[U]\otimes \C[\param]^W[\hbar]$ are positively graded.
It follows that $\C[\mathbb{Y}']\xrightarrow{\sim} \C[U]\otimes \C[\param]^W[\hbar]$.
\end{proof}

\begin{proof}[Proof of Proposition \ref{Prop:quant_univ_property}]
Let $B',\A'$ be as in the statement of the proposition. Consider
the Rees algebras $B'_\hbar:=R_\hbar(B'),\A'_\hbar:=R_\hbar(\A')$.
Take $z:=\hbar^d$. Thanks to Proposition \ref{Prop:univer_AY},
for each choice of a generating map for
$\A'_\hbar$, we get a unique $\C[z]$-linear homomorphism
$\C[\mathbb{Y}]\rightarrow B'_\hbar$ and a unique $B'_\hbar$-linear isomorphism
$B'_\hbar\otimes_{\C[\mathbb{Y}]}\A_{\mathbb{Y}}\rightarrow \A'_\hbar$ intertwining
the generating maps. Thanks to the uniqueness, these homomorphisms are
graded. Different choices of generating maps for $\A'_\hbar$
result in an action of $U(B'_\hbar)$ and hence do not change the homomorphism
$\C[\param]^W[\hbar]\rightarrow B'_\hbar$.
Hence there  is a unique $\C[\hbar]$-linear graded algebra homomorphism
$\C[\param]^W[\hbar]\rightarrow B'_\hbar$ and a
graded $B'_\hbar$-algebra isomorphism $B'_\hbar\otimes_{\C[\param]^W[\hbar]}\A_{\param,\hbar}^W
\xrightarrow{\sim} \A'_\hbar$. We specialize at $\hbar=1$
and arrive at the claim of the proposition.
\end{proof}

\subsection{Spherical symplectic reflection algebras}\label{SS_SRA}
Let us consider the case of a symplectic quotient singularity $X=V/\Gamma$.
Let $\pi:V\rightarrow V/\Gamma$ denote the quotient morphism.
Recall that by a symplectic reflection in $\Gamma$ we mean an element
$s\in \Gamma$ with $\operatorname{rk}(s-\operatorname{id})=2$.
To a symplectic reflection $s$ we assign the subgroup $\Gamma^s\subset \Gamma$,
the pointwise stabilizer of $V^s$, and the quotient $\Xi^s:=N_\Gamma(\Gamma^s)/\Gamma^s$.
The codimension $2$ symplectic leaves are in one-to-one correspondence with the conjugacy
classes of the subgroups $\Gamma^s$. The leaf corresponding to $\Gamma^s$
is of the form $\{v\in V| \Gamma_v=\Gamma^s\}/\Xi^s$. So we see that
the fundamental group is $\Xi^s$. From here we deduce that the irreducible
components of $\pi^{-1}(\mathcal{L})$ are labelled by the non-trivial $N_{\Gamma}(\Gamma^s)$-conjugacy
classes in $\Gamma^s$. %Note that $V/\Gamma$ is $\Q$-factorial.

The following lemma is proved in \cite[Lemma 2.4]{Bellamy}.

\begin{Lem}\label{Lem:param_space}
We have $H^2(X^{reg},\C)=0$.
\end{Lem}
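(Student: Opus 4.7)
The plan is to realize $X^{reg}$ as a finite étale quotient of an open subset of $V$ and then invoke the classical topology of complements of linear subspace arrangements.

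First, I would identify $X^{reg}$ with $V^{free}/\Gamma$, where $V^{free} := V \setminus \bigcup_{\gamma \in \Gamma \setminus \{1\}} V^{\gamma}$ is the locus of free $\Gamma$-orbits. The point is that $\Gamma \subset \operatorname{Sp}(V)$ contains no complex pseudoreflections: for any $\gamma \in \operatorname{Sp}(V)$ the eigenvalues occur in reciprocal pairs $\lambda, \lambda^{-1}$, so $\operatorname{rk}(\gamma - \operatorname{id})$ is even and any nontrivial $\gamma$ satisfies $\operatorname{rk}(\gamma - \operatorname{id}) \geqslant 2$. Applying the Chevalley--Shephard--Todd theorem to the stabilizer $\Gamma_v$ via the Luna slice at $v$, this forces $\Gamma_v = 1$ at every smooth point $\Gamma v$, giving the claimed identification.

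Next, since $\Gamma$ acts freely on $V^{free}$, the map $V^{free} \to V^{free}/\Gamma$ is a Galois étale cover of degree $|\Gamma|$, so the transfer (equivalently, the vanishing of higher cohomology of finite groups with complex coefficients) yields $H^2(X^{reg},\C) = H^2(V^{free},\C)^{\Gamma}$. It therefore suffices to show $H^2(V^{free},\C) = 0$. Now $V \setminus V^{free} = \bigcup_{\gamma \neq 1} V^{\gamma}$ is a closed algebraic subset of complex codimension $\geqslant 2$ in the contractible vector space $V$, so by the classical fact that the complement of a complex analytic subset of codimension $\geqslant k$ in $\C^N$ is $(2k-2)$-connected (here $k=2$) one has $H^1(V^{free},\C) = H^2(V^{free},\C) = 0$, and the lemma follows.

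The only mildly delicate point is the connectivity statement; it is however standard and can be proved by induction on the number of irreducible components using Mayer--Vietoris, since all intersections of the linear subspaces $V^{\gamma}$ remain of complex codimension $\geqslant 2$, the base case reducing via a Thom isomorphism argument to the contractibility of $V$. The structural observation that makes the whole argument go through, as opposed to requiring genuinely more serious input, is the absence of pseudoreflections in $\operatorname{Sp}(V)$, which lets us identify $X^{reg}$ with a free finite quotient of an arrangement complement.
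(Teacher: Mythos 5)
Your proof is correct and follows essentially the same route as the paper: identify $X^{reg}$ with the free locus $V^0/\Gamma$, reduce to $\Gamma$-invariants of $H^2(V^0,\C)$ via the transfer, and observe that the removed locus $\bigcup_{\gamma\neq 1}V^\gamma$ has complex codimension at least $2$ because $\operatorname{Sp}(V)$ contains no pseudoreflections, so $H^2(V^0,\C)=0$. You have merely spelled out the justifications (Chevalley--Shephard--Todd for the identification of the regular locus with the free locus, and the connectivity of the complement of a codimension-$\geqslant 2$ subspace arrangement) that the paper's one-line proof treats as standard.
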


From  Lemma \ref{Lem:param_space} and the preceding paragraph  we conclude that the dimension of
$\param$ coincides with the number of conjugacy classes of symplectic
reflections in $\Gamma$, this result was  obtained in \cite[Theorem 1.3]{Bellamy}.
%More precisely, let $\Gamma_1,\ldots,\Gamma_k$
%be the conjugacy classes of possible subgroups $\Gamma^s$. Then
%$\param=\bigoplus_{i=1}^k \param_i$, where $\param_i$ is the reflection
%representation of $W_i=\widehat{W}_i^{\Xi_i}$. Recall that we can identify $\param_i$ with the space
%of $\Xi_i$-invariants in $\widehat{\param}_i:=H^2(\tilde{\Sigma}_i,\C)$,
%where $\tilde{\Sigma}_i$ is the minimal resolution of $\C^2/\Gamma_i$.

There is a way to deform $\C[V]^\Gamma$ discovered by Etingof and
Ginzburg, \cite{EG}. Namely, we first deform the smash-product algebra
$\C[V]\#\Gamma$. Let $t\in \C$ and $c$ be a $\Gamma$-invariant
function $S\rightarrow \C$, where $S$ is the set of all symplectic
reflections. Let $\omega$ denote the symplectic form on $V$.
For $s\in S$, we write $\omega_s$ for the rank $2$ form on $V$
whose kernel coincides with $V^s$ and whose restriction to
$\operatorname{im}(s-1)$ coincides with the restriction of $\omega$. Then we can form
the algebra $H_{t,c}$ (known as a symplectic reflection algebra) by
$$H_{t,c}=T(V)\#\Gamma/\left(u\otimes v-v\otimes u-t\omega(u,v)-
\sum_{s\in S}c(s)\omega_s(u,v)s| u,v\in V\right).$$
This is a filtered deformation of $\C[V]\#\Gamma$, see \cite[Theorem 1.3]{EG}.
Now take the averaging
idempotent $e\in \C\Gamma$. We can form the so called {\it spherical subalgebra}
$eH_{t,c}e\subset H_{t,c}$ that is a filtered associative algebra with unit
$e$. It is a filtered deformation of $\C[V]^\Gamma$ that induces
the Poisson bracket $t\omega^{-1}$ on $\C[V]^\Gamma$, see the proof of
\cite[Theorem 1.6]{EG}. So we get a
filtered quantization when $t=1$.

Let $\mathfrak{c}$ denote the space of $\Gamma$-invariant functions
$S\rightarrow \C$, this is the space of parameters $c$.
The space $\mathfrak{c}$  is in an affine bijection with $\param$.
Namely, we can split $S$ into the union $S_1\sqcup S_2\sqcup\ldots\
\sqcup S_k$, where $S_i$ stands for the symplectic reflections in
$\Gamma_i$. Consequently, $\mathfrak{c}$ splits into the direct sum
$\bigoplus_{i=1}^k \mathfrak{c}_i$. Let
$\widehat{\mathfrak{c}}_i$ denote the space of $\Gamma_i$-invariant functions
$\Gamma_i\setminus\{1\}\rightarrow \C$. It comes with a natural action of $\Xi_i$. The space $\mathfrak{c}_i$ embeds
as the $\Xi_i$-invariants into $\widehat{\mathfrak{c}}_i$.

An affine isomorphism between $\widehat{\mathfrak{c}}_i$ and $\widehat{\param}_i$ in the form we need
was explained in \cite[Section 6.2]{quant_iso}. Namely, define the element $C_i\in \C\Gamma_i$
by $|\Gamma_i|^{-1}\left(1+\sum_{\gamma\in \Gamma_i}c(\gamma)\gamma_i\right)$. Let $N_1,\ldots,N_{r_i}$
denote the nontrivial irreducible representations of $\Gamma_i$. Then we
send an element $\{c(\gamma)\}|_{\gamma\in \Gamma_i}\in \widehat{\mathfrak{c}}_i$
to $\sum_{j=1}^{r_i}(\operatorname{tr}_{N_j}C_i)\varpi_j$, where $\varpi_1,\ldots,
\varpi_{r_i}$ are the fundamental weights in $\widehat{\param}_i$. Note that this isomorphism
$\widehat{\mathfrak{c}}_i\xrightarrow{\sim} \widehat{\mathfrak{P}}_i$ is $\Xi_i$-equivariant.

Let us denote the resulting affine isomorphism $\mathfrak{c}\xrightarrow{\sim}\param$ by $\iota$.

The following proposition generalizes  \cite[Theorem 6.2.1]{quant_iso}.

\begin{Prop}\label{Prop:SRA_univer}
We have an isomorphism $eH_{1,c}e\xrightarrow{\sim} \A_{\iota(c)}$
of filtered quantizations of $\C[V]^\Gamma$.
\end{Prop}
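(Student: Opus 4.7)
The plan is to apply the universal property of $\A_\param^W$ from Proposition~\ref{Prop:quant_univ_property} to the family $\{eH_{1,c}e\}_c$ of spherical symplectic reflection algebras, and then identify the resulting parameter map with $\iota$ by completing at generic points of the codimension-$2$ symplectic leaves of $X$ and invoking the known Kleinian case from \cite[Theorem~6.2.1]{quant_iso}.

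First, view $eH_{1,\cdot}e$ as a filtered $\C[\mathfrak{c}]$-algebra by letting the $c(s)$ be indeterminates. The Etingof--Ginzburg PBW theorem shows it is a filtered quantization of $\C[\mathfrak{c}]\otimes\C[V]^\Gamma$. By Proposition~\ref{Prop:quant_univ_property}, there is a unique filtered homomorphism $\phi:\C[\param]^W\to\C[\mathfrak{c}]$ and a unique filtered $\C[\mathfrak{c}]$-algebra isomorphism $\C[\mathfrak{c}]\otimes_{\C[\param]^W}\A_\param^W\xrightarrow{\sim} eH_{1,\cdot}e$ intertwining generating maps. Write $\Phi:\mathfrak{c}\to\param/W$ for the morphism induced by $\phi$; specializing at $c$ gives $\A_{\Phi(c)}\cong eH_{1,c}e$, and it remains to prove $\Phi=\pi\circ\iota$, where $\pi:\param\twoheadrightarrow\param/W$ is the quotient.

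By Lemma~\ref{Lem:param_space}, $\param=\bigoplus_{i=1}^k\param_i$ and similarly $\mathfrak{c}=\bigoplus_{i=1}^k\mathfrak{c}_i$. To match these factor by factor, complete the Rees algebra $R_\hbar(eH_{1,\cdot}e)$ at a closed point $x$ of each codimension-$2$ leaf $\mathcal{L}_i$. Arguing as in Step~2 of the proof of Proposition~\ref{Prop:deform_iso}, the completion splits as a tensor product of a Weyl algebra on $T_x\mathcal{L}_i$ with a formal quantization $A_{i,\hbar}$ of the Kleinian slice $\Sigma_i=\C^2/\Gamma_i$. On the SRA side, a slice computation identifies $A_{i,\hbar}$ (after stripping the Weyl algebra factor) with the completion of the spherical SRA for the Kleinian group $\Gamma_i$ with parameter $c|_{\Gamma_i}$; on the universal-quantization side, the same completion reads off the $i$-th Kleinian factor of $\A_\param^W$, parameterized by the $i$-th projection of $\Phi$.

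Consequently $\Phi_i$ coincides with the analogous parameter map for spherical SRAs on the Kleinian singularity $\C^2/\Gamma_i$, which by \cite[Theorem~6.2.1]{quant_iso} equals $\pi_i\circ\iota_i$. Assembling the components yields $\Phi=\pi\circ\iota$, and specialization at a given $c$ produces the claimed isomorphism $eH_{1,c}e\cong\A_{\iota(c)}$. The main obstacle is the slice identification at $x\in\mathcal{L}_i$: one must correctly match the induced Kleinian SRA parameter $c|_{\Gamma_i}$ and, crucially, account for the affine shift $\iota(0)\ne 0$ coming from the trivial-representation trace in the definition of $\iota$, which is exactly the quantum correction that distinguishes the quantum parameter map from its Poisson counterpart.
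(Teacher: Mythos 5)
Your approach is essentially the paper's: reduce to matching parameters component by component along the codimension-$2$ leaves by completing the Rees algebra at a point $x\in\mathcal{L}_i$, and invoke the known Kleinian answer from \cite{quant_iso}. The one packaging difference is that you route the abstract existence step through the universal property (Proposition~\ref{Prop:quant_univ_property}) to get a morphism $\Phi:\mathfrak{c}\to\param/W$, whereas the paper simply notes that Theorem~\ref{Thm:quant_classif} already gives $eH_{1,c}e\cong\A_\lambda$ for some $\lambda$ and then identifies $\lambda$ pointwise; these are equivalent.

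The genuine gap is the one you flag yourself at the end: the ``slice computation'' identifying the completion of $R_\hbar(eH_{1,c}e)$ at $x\in\mathcal{L}_i$ with $\Weyl_\hbar(V^{\Gamma_i})^{\wedge_0}\,\widehat{\otimes}\,R_\hbar(e^iH^i_{1,c_i}e^i)^{\wedge_0}$ is not an obvious local computation but a substantial prior theorem — the paper cites \cite[Theorem~1.2.1]{sraco} for exactly this. Without it you do not know that after stripping the Weyl factor you get a spherical SRA of the slice pair $(\Gamma_i,V/V^{\Gamma_i})$ with the correct (restricted) parameter $c_i$, as opposed to some other formal quantization of $\C[\Sigma_i]^{\wedge_0}$. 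On the $\A_\lambda$ side you likewise need to know that the period of the induced formal quantization of the slice is the projection $\lambda_i$; the paper invokes naturality of the noncommutative period map for this, and you should say so explicitly rather than just pointing at the analogy with Step~2 of Proposition~\ref{Prop:deform_iso}. Once both completion splittings are in hand, passing from the formal to the filtered isomorphism is exactly the argument of Steps~2--3 of Proposition~\ref{Prop:deform_iso}, and the comparison with \cite[Theorem~6.2.2]{quant_iso} finishes the identification $\iota(c)_i\in W_i\lambda_i$ (you cite Theorem~6.2.1 of \emph{loc.\ cit.}; the paper's proof uses Theorem~6.2.2, the Kleinian parameter-matching statement — make sure you are invoking that one). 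Your remark about the affine shift $\iota(0)\neq 0$ is a correct and useful sanity check, but it is already absorbed into the Kleinian theorem; it is not an extra step you need to supply.
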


In particular, every filtered quantization of $\C[V]^\Gamma$ is a spherical symplectic
reflection algebra (see \cite{Boddington,Hodges,Levy} for various special cases of this
result). An analog of this result for $t=0$ was obtained by Bellamy in \cite[Corollary 1.6]{Bellamy}.

\begin{proof}[Proof of Proposition \ref{Prop:SRA_univer}]
Thanks to Theorem \ref{Thm:quant_classif}, we already know that $eH_{1,c}e\cong \A_\lambda$ for some $\lambda\in \param$
and we need to show that $\lambda=\iota(c)$.
Consider the Rees algebra $R_\hbar(eH_{1,c}e)$ and its completion $R_\hbar(eH_{1,c}e)^{\wedge_x}$
at a point $x\in \mathcal{L}_i$.
According to \cite[Theorem 1.2.1]{sraco}, we get an isomorphism of formal quantizations
$$R_\hbar(eH_{1,c}e)^{\wedge_x}\cong \mathbb{A}_{\hbar}(V^{\Gamma_i})^{\wedge_0}\widehat{\otimes}_{\C[[\hbar]]}
R_\hbar(e^i H^i_{1,c_i}e^i)^{\wedge_0}.$$
Here the notation is as follows. We write $c_i$ for the projection of
$c$ to $\mathfrak{c}_i$. The notation $\mathbb{A}_\hbar(V^{\Gamma_i})$ is for the Rees algebra of the Weyl
algebra of the symplectic vector space $V^{\Gamma_i}$. The notation $H^i_{1, c_i}$
is for the SRA associated to $(\Gamma_i, V/V^{\Gamma_i})$ and $e^i$ is the averaging
idempotent in $\Gamma_i$.

By Lemma \ref{Lem:homol_vanish}, $\param_0=\{0\}$.
So, by Proposition \ref{Rem:quant_param}, we reduce to proving that, in the notation of that
proposition, we have an isomorphism of filtered quantizations
$e^i H^i_{1,c_i}e^i\cong \A^i_{\lambda_i}$. This follows from
\cite[Theorem 6.2.2]{quant_iso}.
%Similarly, by Remark \ref{Rem:quant_param}, we see that
%$$(R_\hbar(\A_\lambda))^{\wedge_x}\cong \mathbb{A}_\hbar(V^{\Gamma_i})^{\wedge_0}
%\widehat{\otimes}_{\C[[\hbar]]}(R_\hbar(\A^i_{\lambda_i}))^{\wedge_0},$$
%where $\A^i_{\lambda_i}$ denotes the filtered quantization of $\C[\Sigma_i]$
%corresponding to parameter $\lambda_i$.
%Similarly to Steps 2,3 of the proof of Proposition \ref{Prop:deform_iso},
%we conclude that $e^i H^i_{1,c_i}e^i\cong \A^i_{\lambda_i}$ as filtered
%quantizations of $\C[\Sigma_i]$. On the other hand, by
%\cite[Theorem 6.2.2]{quant_iso}, $e^i H^i_{1,c_i}e^i\cong \A^i_{\iota(c)_i}$. We conclude
%that $\iota(c)_i\in W_i\lambda_i$ and hence $\iota(c)\in W\lambda$ that finishes
%the proof.
\end{proof}

This proposition establishes an isomorphism $e H_{1,c} e \cong e H_{1,c'}e$
of quantizations when $\iota(c)$ and $\iota(c')$ are $W$-conjugate.
In the next section we will determine when $e H_{1,c}e, e H_{1,c'}e$
are isomorphic as filtered algebras.

\begin{Rem}
One application of Proposition \ref{Prop:SRA_univer} is to construct shift $H_{1,c+\psi}$-$H_{1,c}$-bimodules $\mathsf{S}_{c,\psi}$, where $\psi$ is
an integral element of $\mathfrak{c}$, compare with \cite{BC,rouq_der}. For a
fixed $\psi$ and a Zariski generic $c$, the bimodule $\mathsf{S}_{c,\psi}$
gives a Morita equivalence between $H_{1,c}$ and $H_{1,c+\psi}$, which
can be established similarly to \cite[Corollary 3.5]{rouq_der} using the fact that the
algebra $H_{1,c}$ is simple for a Weil generic $c$, \cite[Theorem 4.2.1]{sraco}.
We do not provide details in the present paper, see \cite{SRA_der} instead.
\end{Rem}

\subsection{Automorphisms and isomorphisms}\label{SS_iso}
Here we are going to study the relationship between three different objects:
\begin{enumerate}
\item The reductive part of the automorphism group  of the graded Poisson algebra $\C[X]$ (note that this group is algebraic). The reductive part will be denoted by $\mathcal{G}$.
\item Filtered Poisson algebra isomorphisms $\C[X_\lambda]\xrightarrow{\sim} \C[X_{\lambda'}]$.
\item Filtered algebra isomorphisms $\A_\lambda\xrightarrow{\sim} \A_{\lambda'}$.
\end{enumerate}
In what follows we always assume that $d$ is even: we can always rescale the action of
$\C^\times$ on $X$ to achieve this.

Note that $\mathcal{G}$ acts on the set of isomorphism classes of filtered Poisson deformations
(resp, quantizations) by replacing the isomorphism $\iota: \gr\A\xrightarrow{\sim}\C[X]$
with $g\circ \iota$, for $g\in \mathcal{G}$. So we have two, a priori different, actions of $\mathcal{G}$ on $\param/W$
viewed as the space of parameters for filtered Poisson deformations and for filtered quantizations.
These actions will be called Poisson and quantum below. Note that the universal properties
for the algebras $\C[X_{\param}]^W$ (Corollary \ref{Cor:X_univ_deform}) and
$\A_{\param}^W$ (Proposition \ref{Prop:quant_univ_property}) yield $\mathcal{G}$-actions
on $\C[X_{\param}]^W$ (by graded Poisson algebra automorphisms) and
on $\A_{\param}^W$ (by filtered algebra automorphisms). Let us explain why this is the
case for quantizations. Any element $g\in \mathcal{G}$
gives rise to an automorphism $\tilde{g}:\A_{\param}^W\rightarrow \A_{\param}^W$ by the universal
property. The automorphism $\tilde{g}$ is defined uniquely up to composing with a $\C[\param]^W$-linear automorphism of $\A_\param^W$ that gives the identity on $\C[X]$. The group of such automorphisms is easily seen to be unipotent. So we can choose $\tilde{g}$ so that the map $g\mapsto\tilde{g}$ is a
group homomorphism.

These actions preserve the
subalgebras $\C[\param]^W$ and induce the Poisson and quantum actions
on $\param/W$. Note that the quantum action on $\C[\param]^W$ is filtration preserving and the  Poisson action on $\C[\param]^W$ is obtained
from the quantum action by passing to the associated graded action.

\begin{Lem}\label{Lem:isom_deform}
We have a filtered Poisson algebra isomorphism $\C[X_\lambda]\xrightarrow{\sim} \C[X_{\lambda'}]$
(resp., filtered associative algebra isomorphism $\A_\lambda\xrightarrow{\sim} \A_{\lambda'}$)
if and only if $W\lambda,W\lambda'$ lie in the same $\mathcal{G}$-orbit for the Poisson (resp.,
quantum) action on $\param/W$.
\end{Lem}
\begin{proof}
The if part follows from the preceding discussion. So we need to show, say, that if $\A_\lambda,\A_{\lambda'}$ are isomorphic as filtered algebras, then $W\lambda,W\lambda'$
are in the same $\mathcal{G}$-orbit.

An isomorphism $\A\rightarrow \A'$ of filtered (associative/Poisson) algebras
induces an isomorphism $\gr\A\rightarrow \gr\A'$ of graded Poisson algebras.
So $W\lambda,W\lambda'$ lie in the same orbit for the group of graded Poisson automorphisms of
$\C[X]$. We need to show that the unipotent radical of this group acts trivially on $\param/W$.
Let $g$ be an element in the unipotent radical. Since it is unipotent, $\ln(g)$ makes sense.
It is a graded Poisson algebra derivation of $\C[X]$. By Proposition \ref{Prop:Poisson_der},
it is inner. So it is the Poisson bracket with a degree $d$ element in $\C[X]$. Lift this
element to an element $a\in (\A_\param^W)_{\leqslant d}$. Since $\ln(g)$ is nilpotent, so
is $[a,\cdot]$. So we can integrate $[a,\cdot]$ to a $\C[\param]^W$-linear filtered algebra
isomorphism of $\A_{\param}^W$ lifting $g$. From here we deduce that $g$ acts trivially
on $\C[\param]^W$.
\end{proof}

We are now going to  show
that the Poisson and quantum actions are the same.
%For this, we will need to define a certain subgroup $\mathfrak{A}\subset \GL(\param)$.
%Let $\mathsf{DA}$ denote the group of the diagram automorphisms for
%the union of Dynkin diagrams associated to $(\widehat{W}_i,\widehat{\param}_i)$
%Set $\Xi:=\prod_{i=1}^k \Xi_i$, where, recall, $\Xi_i$ is the group of
%diagram automorphisms of the Dynkin diagram for $(\widehat{W}_i,\widehat{\param}_i)$
%coming from the action of the fundamental group $\pi_1(\mathcal{L}_i)$.
%So, in particular, $\Xi\subset \mathsf{DA}$. Note that the normalizer $N_{\mathsf{DA}}(\Xi)$
%acts on $\bigoplus_{i=1}^k \param_i$. Consider the
%group $\operatorname{GL}(\param_0)\times N_{\mathsf{DA}}(\Xi)$,
%which acts on $\param$ commuting with $W$. Finally, let $\param_{0,\Z}$
%denote the image of $H^2(X^{reg},\Z)$ in $H^2(X^{reg},\C)$.
%
%Let $\mathfrak{A}$ denote the image of
%$\operatorname{GL}(\param_{0,\Z})\times N_{\mathsf{DA}}(\Xi)$ in $\operatorname{GL}(\param)$.
%This is a discrete group.
%This groups acts on $\param/W$ (but not on $X$ or $\A_{\param}^W$).
In the proof we will need a lemma describing the group of graded Poisson automorphisms
of $\C[V]^\Gamma$.
Namely, let $V$ be a symplectic vector space and $\Gamma$ be a finite
group of its linear symplectomorphisms. Set $\Theta:=N_{\operatorname{Sp}(V)}(\Gamma)/\Gamma$.
This groups acts on $\C[V]^\Gamma$ faithfully.

\begin{Lem}\label{Lem:symplectomorphisms}
The group $\mathcal{G}$ of graded Poisson automorphisms of $\C[V]^\Gamma$ coincides with $\Theta$.
\end{Lem}
\begin{proof}
 We have an inclusion  $\Theta\hookrightarrow \mathcal{G}$.
Note that $V^0$, the free locus for the $\Gamma$-action, is  the simply-connected cover
of $V^0/\Gamma=(V/\Gamma)^{reg}$. The  Galois group of this cover is  $\Gamma$.
Every automorphism of $\C(V/\Gamma)=\C(V)^\Gamma$ lifts to an automorphism of $\C(V)$.
The lift is unique up to composing with an element of $\Gamma$. An automorphism of $V^0/\Gamma$
therefore lifts to an automorphism of $V^0.$
So the $\mathcal{G}$-action on $V^0/\Gamma$
lifts to an action of an extension $\tilde{\mathcal{G}}$ of $\mathcal{G}$ by $\Gamma$ on $V^0$ by automorphisms.

The action of
$\tilde{\mathcal{G}}$ on $V^0$ extends to an action on $V$ because $\operatorname{codim}_V (V\setminus V^0)\geqslant 2$. It commutes with the dilating $\C^\times$
and preserves the symplectic form. So it is via  a group homomorphism
$\tilde{\mathcal{G}}\rightarrow \operatorname{Sp}(V)$. Also it descends to $V/\Gamma$ and so normalizes $\Gamma$.
We deduce that $\tilde{\mathcal{G}}\subset N_{\operatorname{Sp}(V)}(\Gamma)$ and hence $\mathcal{G}\subset\Theta$.
\end{proof}

%Let us define an equivalence relation $\sim$ on $\{1,\ldots,k\}$ by setting
%$i\sim j$ if $(\widehat{W}_i,\widehat{\param}_i, \Xi_i)$ and $(\widehat{W}_j,\widehat{\param}_j,\Xi_j)$ %are isomorphic
%(here we write $\Xi_i$ for the group of diagram automorphisms
%coming from the $\pi_1(\mathcal{L}_i)$-action). We can fix isomorphisms in
%an equivalence class in a consistent way. Recall that $\param_i=\widehat{\param}^{\Xi_i}$
%and $W_i=\widehat{W}_i^{\Xi_i}$ in $\operatorname{GL}(\param_i)$.
%Then we have $\param=\bigoplus_{i=0}^k \param_i, W=\prod_{i=1}^k W_i$.

\begin{Prop}\label{Lem:autom_action}
The Poisson and quantum actions coincide. Moreover, both are trivial on $\mathcal{G}^\circ$.
\end{Prop}
\begin{proof}
%Recall that $\param/W=\param_0\times \prod_{i=1}^k \param_i/W_i$, where
%$\mathcal{L}_1,\ldots,\mathcal{L}_k$ are the codimension $2$ symplectic leaves of
%$X$.
%
%We start by producing a homomorphism $\mathcal{G}\rightarrow \mathfrak{A}$.
%Recall that we write $\Sigma_i$ for the slice to the symplectic leaf
%$\mathcal{L}_i, \Sigma_i=\C^2/\Gamma_i$, and we write
%$\underline{X}_{\param_i}$ for its deformation over $\param_i$.
Recall that the Poisson action on $\C[\param]^W$ is the associated graded action
for the quantum action. So we need to prove that the quantum action on $\C[\param]^W$
is by graded algebra automorphisms and is trivial on $\mathcal{G}^\circ$.

Take $g\in \mathcal{G}$. Recall, Lemma \ref{Lem:Cartan_compute},
that $\param/W=H^2(X^{reg},\C)\times \prod_{i=1}^k \param_i/W_i$.  We claim that
\begin{itemize}
\item[(*)]
the projection
$\param/W\twoheadrightarrow H^2(X^{reg},\C)$ intertwines the action of $g$
with the usual linear action on $H^2(X^{reg},\C)$. \end{itemize}
Indeed, by Proposition \ref{Rem:quant_param},
the image of $W\lambda$ under the projection to $H^2(X^{reg},\C)$ is the period of the restriction of
$\A_\lambda$ to $X^{reg}$. The period map is functorial by the construction in
\cite[Section 4]{BK_quant}. This implies (*). Note that (*) in particular implies that the action of
$\mathcal{G}^\circ$ on the factor $H^2(X^{reg},\C)$ of $\param/W$ is trivial.

 Recall that $\Sigma_i$ denotes a slice to
the symplectic leaf $\mathcal{L}_i, \Sigma_i=\C^2/\Gamma_i$, and
$\Xi_i$ is the group of  diagram automorphisms of the corresponding
Dynkin diagram coming from the action of $\pi_1(\mathcal{L}_i)$.
%Let $\underline{X}^i_{\param_i}$ denote the Poisson deformation of
%$\Sigma_i$ corresponding to the space of parameters $\param_i$.

The element $g$  permutes the symplectic
leaves (we denote the corresponding permutation of $\{1,\ldots,k\}$ again by $g$).
We have $\Gamma_i\cong \Gamma_{g(i)}$ and $\Xi_i\cong \Xi_{g(i)}$. Fix
some identifications. Then $g$ gives rise to automorphisms of $\Gamma_i$
and $\Xi_i$.

Fix a parameter $\lambda\in \param$ and a point $x\in \mathcal{L}_i$. Let $\lambda'$
be a representative of $g(W\lambda)$. So $g$ gives rise to a filtered isomorphism
$\A_\lambda\xrightarrow{\sim}\A_{\lambda'}$.
Recall, (\ref{eq:quantum_decomposition}), the decompositions
$$R_\hbar(\A_\lambda)^{\wedge_x}\cong
\mathbb{A}_{\hbar}(T_x\mathcal{L}_i)^{\wedge_0}\widehat{\otimes}_{\C[[\hbar]]}
R_\hbar(\A_{\lambda_i}^i)^{\wedge_0}, R_\hbar(\A_{\lambda'})^{\wedge_{gx}}\cong
\mathbb{A}_{\hbar}(T_{gx}\mathcal{L}_{gi})^{\wedge_0}\widehat{\otimes}_{\C[[\hbar]]}
R_\hbar(\A_{\lambda'_{gi}}^{gi})^{\wedge_0}.
$$
So $g$ gives an isomorphism of the right hand sides in these decompositions:
\begin{equation}\label{eq:g_isomorphism}
\alpha: \mathbb{A}_{\hbar}(T_x\mathcal{L}_i)^{\wedge_0}\widehat{\otimes}_{\C[[\hbar]]}
R_\hbar(\A_{\lambda_i}^i)^{\wedge_0}\xrightarrow{\sim}
\mathbb{A}_{\hbar}(T_{gx}\mathcal{L}_{gi})^{\wedge_0}\widehat{\otimes}_{\C[[\hbar]]}
R_\hbar(\A_{\lambda'_{gi}}^{gi})^{\wedge_0}.
\end{equation}
We claim that we can compose $\alpha$ with a $\C[[\hbar]]$-linear automorphism of the target so
that the composition is $\C^\times$-equivariant. Let $\gamma,\gamma'$ denote the actions
of $\C^\times$ on the source and the target of (\ref{eq:g_isomorphism}) and let $\gamma_0,\gamma_0'$
denote the actions modulo $\hbar$, note that the corresponding isomorphism $\alpha_0$ can be viewed as
an automorphism.

We argue similarly to Step 1 of the proof of Proposition \ref{Rem:quant_param}.
We can modify $\alpha$ so that $\alpha\gamma\alpha^{-1}$ and $\gamma'$ commute. In particular,
$\alpha_0\gamma_0\alpha_0^{-1}$ preserves $$(T_{gx}\mathcal{L}_{gi})^*\subset
\mathbb{A}_{\hbar}(T_{gx}\mathcal{L}_{gi})^{\wedge_0}\widehat{\otimes}_{\C[[\hbar]]}
R_\hbar(\A_{\lambda'_{gi}}^{gi})^{\wedge_0}/(\hbar)$$ by the degree reasons: this is the subspace of
elements of degree $d/2$. So it preserves its Poisson centralizer,  $R_\hbar(\A_{\lambda'_{gi}}^{gi})^{\wedge_0}/(\hbar)$. Consider the maximal ideal $\mathfrak{m}$
in that algebra so that $\mathfrak{m}/\mathfrak{m}^2$ is 3-dimensional. Since $\alpha_0$ is
an automorphism, the eigenvalues of $\alpha_0\gamma_0\alpha_0^{-1}$ and $\gamma_0'$ on $\mathfrak{m}/\mathfrak{m}^2$ coincide. Combining this with the observation that these actions commute and rescale the bracket by $t\mapsto t^{-d}$, we can use an easy case by case argument to see that they coincide on $\mathfrak{m}/\mathfrak{m}^2$. So the two actions on the target of
(\ref{eq:g_isomorphism}) differ by a pro-unipotent family of automorphisms and hence are conjugate
by a pro-unipotent automorphism. Modifying $\alpha$ accordingly, we get a $\C^\times$-equivariant
isomorphism in (\ref{eq:g_isomorphism}).

% Note that for the maximal ideal
%$\mathfrak{m}\subset %(\mathbb{A}_{\hbar}(T_{gx}\mathcal{L}_{gi})^{\wedge_0}\widehat{\otimes}_{\C[[\hbar]]}
%R_\hbar(\A_{\lambda'_{gi}}^{gi})^{\wedge_0})/(\hbar)$ the weights of the two torus actions
%coincide: there are $\dim X-2$ weights equal to $d/2$ and three weights equal to the weights
%of the generators of $\C[\Sigma_i]$ (multiplied by $d/2$). From here using a case by case argument
%we easily see that
%the actions on $\mathfrak{m}/\mathfrak{m}^2$ coincide. The homomorphism from the reductive part of
%the group of automorphisms of $\mathbb{A}_{\hbar}(T_{gx}\mathcal{L}_{gi})\otimes_{\C[\hbar]}
%R_\hbar(\A_{\lambda'_{gi}}^{gi}$ that rescale $\hbar$ to $\operatorname{GL}(\mathfrak{m}/\mathfrak{m}^2)$ is injective. We conclude that the two
%$\C^\times$-actions are conjugate by a $\C[[\hbar]]$-algebra automorphism. This implies our claim.

The conclusion is that we have a filtered algebra isomorphism
$\mathbb{A}(T_x\mathcal{L}_i)\otimes
\A_{\lambda_i}^i\xrightarrow{\sim}
\mathbb{A}(T_{gx}\mathcal{L}_{gi})\otimes
\A_{\lambda'_{gi}}^{gi}$.

Note that the algebras above are filtered quantizations of the symplectic quotient singularity
$T_x\mathcal{L}_i\times \C^2/\Gamma_i$. The space of quantization parameters is $\widehat{\param}_i/\widehat{W}_i$. The map $\widehat{\param}_i/\widehat{W}_i\rightarrow
\widehat{\param}_i/\widehat{W}_i$ coming from $g$ is in the image of the action of
the group $\Theta$ from Lemma \ref{Lem:symplectomorphisms}. This group coincides with
$N_{\SL_2}(\Gamma_i)/\Gamma_i$. The discussion of the isomorphism $\widehat{\mathfrak{c}}_i
\xrightarrow{\sim} \widehat{\param}_i$ in Section \ref{SS_SRA} shows that the action of
$N_{\SL_2}(\Gamma_i)/\Gamma_i$ on $\widehat{\param}_i$ is by linear automorphisms.
So the automorphism of  $\widehat{\param}_i/\widehat{W}_i$ induced by $g$ is $\C^\times$-equivariant.

Now we show that the automorphism of $\param_i/W_i$ induced by $g$ is
$\C^\times$-equivariant.  Step 2 of the proof of Proposition \ref{Rem:quant_param}
shows that the map $\param_i/W_i\rightarrow \widehat{\param}_i/\widehat{W}_i$
is injective. It also intertwines the actions of $g$. It follows that the action of
$g$ on $\param_i/W_i$ is $\C^\times$-equivariant. This finishes the proof of the claim that
$g$ acts on $\param/W$ by a $\C^\times$-equivariant automorphism.

The claim that the action of $\mathcal{G}^\circ$ on $\param/W$ is trivial follows from the observation that
the action of $(N_{\SL_2}(\Gamma_i)/\Gamma_i)^\circ$ on $\widehat{\param}_i$ is trivial.
\end{proof}

\begin{Cor}\label{Cor:filt_isom}
The filtered Poisson algebras $\C[X_\lambda],\C[X_{\lambda'}]$
are isomorphic if and only if the filtered associative algebras $\A_\lambda,\A_{\lambda'}$
are. So we have a bijection between
\begin{itemize}
\item Filtered Poisson deformations of $\C[X]$ viewed up to a filtered Poisson algebra isomorphism,
\item and filtered quantizations of $\C[X]$ viewed up to a filtered algebra isomorphism.
\end{itemize}
\end{Cor}
\begin{proof}
The first claim follows from Proposition \ref{Lem:autom_action}. Recall that both the filtered Poisson
deformations and the filtered quantizations are classified by $\param/W$ (Proposition
\ref{Prop:deform_X} and Corollary \ref{Cor:X_univ_deform} for Poisson deformations
and Theorem \ref{Thm:quant_classif} for quantizations). Up to filtered algebra isomorphisms
both are classified by the orbits of the group $\mathcal{G}$ on $\param/W$.
\end{proof}

Let us get back to spherical symplectic reflection algebras.

\begin{Prop}\label{Prop:sSRA_filt_iso}
We have a filtered algebra isomorphism $e H_{1,c}e\cong e H_{1,c'}e$ if and only if
$Wc$ and $Wc'$ are $\Theta$-conjugate.
\end{Prop}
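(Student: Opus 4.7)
The plan is to reduce the question to orbits on $\param/W$ and then match up the groups acting. By Proposition \ref{Prop:SRA_univer} we have $e H_{1,c}e\cong \A_{\iota(c)}$ and $e H_{1,c'}e\cong \A_{\iota(c')}$ as filtered quantizations of $\C[V]^\Gamma$, so the two spherical SRAs are isomorphic as filtered associative algebras if and only if the corresponding $\A_\lambda$'s are; by Lemma \ref{Lem:isom_deform} this happens precisely when $W\iota(c)$ and $W\iota(c')$ lie in the same orbit of the graded Poisson automorphism group $G$ of $\C[V]^\Gamma$ acting on $\param/W$. So the proposition amounts to the claim that, after transport of structure by the affine bijection $\iota:\mathfrak{c}\xrightarrow{\sim}\param$, the $G$-orbits on $\param/W$ are exactly the $\Theta$-orbits on $\mathfrak{c}/W$.

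For the easy direction, I construct a natural homomorphism $\Theta\to G$: any representative $g\in N_{\operatorname{Sp}(V)}(\Gamma)$ acts linearly on $V$, preserving $\omega$; since it normalizes $\Gamma$ the action descends to $\C[V]^\Gamma$, and $\Gamma$ itself acts trivially, giving a well defined map. I then verify that $\iota:\mathfrak{c}\to\param$ is $\Theta$-equivariant: both $\Theta$-actions permute the indices $i$ in the same way (the conjugacy class $\Gamma_i$ and the codimension-$2$ leaf $\mathcal{L}_i$ determine each other) and both act on the $i$-th factor through the induced action of $\Xi_i$-compatible diagram automorphisms; the explicit formula $\{c(\gamma)\}\mapsto\sum_j(\operatorname{tr}_{N_j}C_i)\varpi_j$ makes the match on each factor transparent. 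Since $W$ is normalized by $\Theta$, this equivariance descends to $\mathfrak{c}/W\to\param/W$, which together with Lemma \ref{Lem:isom_deform} proves that $\Theta$-conjugate parameters give isomorphic algebras.

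For the converse, I must show that every $G$-orbit on $\param/W$ is already a $\Theta$-orbit, equivalently that the image of $\Theta\to G\to\mathfrak{A}$ coincides with the image of $G\to\mathfrak{A}$, where $\mathfrak{A}$ is the group through which the $G$-action factors by Lemma \ref{Lem:autom_action}. This is the main obstacle. The approach is rigidity for graded Poisson automorphisms: any $g\in G$ restricts to a symplectic automorphism of the smooth locus $X^{\mathrm{reg}}=V^0/\Gamma$ (where $V^0\subset V$ is the open subset with trivial stabilizer), and this lifts canonically along the étale $\Gamma$-cover $V^0\to V^0/\Gamma$ to a symplectic automorphism of $V^0$ normalizing $\Gamma$. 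By Hartogs' theorem (and the fact that $V\setminus V^0$ has codimension $\geqslant 2$ in $V$) this extends to a symplectic automorphism of $V$; gradedness forces it to be linear, so it defines an element of $\Theta$ whose image in $G$ agrees with $g$ on the dense open subalgebra, hence everywhere. Thus $\Theta\to G$ is surjective, and combining with the easy direction gives the proposition.

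An alternative way to settle this last step, bypassing the lift to $V$, is to proceed locally: $g\in G$ permutes the codimension-$2$ leaves and induces Poisson automorphisms of the formal slices $\Sigma_i^{\wedge_0}$, which by the argument in Lemma \ref{Lem:autom_action} act on each $\widehat{\param}_i$ through $\Xi_i$-compatible diagram automorphisms; one then checks that every such combination is realized by an element of $N_{\operatorname{Sp}(V)}(\Gamma)$, using that the local structure of a symplectic quotient at a generic point of $\mathcal{L}_i$ identifies $\widehat{W}_i$ with the Weyl group of the McKay correspondent of $\Gamma_i$ and its diagram automorphisms with the outer symmetries realized inside $\operatorname{Sp}(V)$.
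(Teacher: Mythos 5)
Your proposal is correct and follows essentially the same route as the paper: reduce via Proposition~\ref{Prop:SRA_univer} and Lemma~\ref{Lem:isom_deform} to identifying the graded Poisson automorphism group $G$ of $\C[V]^\Gamma$ with $\Theta$, and prove $G\subset\Theta$ by lifting a given automorphism of $V^0/\Gamma$ to $V^0$ (simply connected, so the $\Gamma$-cover is universal), extending over the codimension-$\geqslant 2$ complement to $V$, and using gradedness to get linearity. The only slip is the word ``canonically'': the lift to $V^0$ is well defined only up to $\Gamma$ (one gets an extension of $G$ by $\Gamma$, as in the paper), but this is harmless since the resulting class in $\Theta=N_{\operatorname{Sp}(V)}(\Gamma)/\Gamma$ is well defined.
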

\begin{proof}
This follows from Lemma \ref{Lem:isom_deform} and Lemma \ref{Lem:symplectomorphisms}.
\end{proof}

\begin{Rem}\label{Rem:equiv_action}
We will need a relative version of results of this section. Let $G\subset \mathcal{G}$ be a connected
reductive subgroup. By Proposition \ref{Lem:autom_action}, $G$ acts trivially on $\param/W$, hence
it acts on each $\A_\lambda,\A^0_\lambda$ by filtered algebra automorphisms. As in Lemma
\ref{Lem:isom_deform}, we have a
$G$-equivariant  filtered algebra isomorphism $\A_\lambda\xrightarrow{\sim}\A_{\lambda'}$
if and only if $W\lambda,W\lambda'$ are in the same $Z_{\mathcal{G}}(G)$-orbit. The same holds for
filtered Poisson deformations. Proposition \ref{Lem:autom_action} implies that there is
a $G$-equivariant filtered algebra isomorphism $\A_\lambda\xrightarrow{\sim}\A_{\lambda'}$
if and only if there is a $G$-equivariant filtered Poisson algebra isomorphism $\A^0_\lambda\xrightarrow{\sim} \A^0_{\lambda'}$. Also note that the stabilizer of $W\lambda$
in $Z_{\mathcal{G}}(G)$ acts on $\A_\lambda$ and $\A^0_\lambda$ by  $G$-equivariant filtered algebra
automorphisms.
\end{Rem}

\section{Birational induction and sheets}\label{S_bir_sheet}
From now on, $G$ is a connected reductive algebraic  group  over $\C$ with Lie algebra $\g$.
\subsection{Lusztig-Spaltenstein induction}\label{SS_LS_induction}
We use the notation of Section \ref{SS_bir_ind}. Recall that
$\lf\subset\g$ is a Levi subalgebra, $P\subset G$ is  a parabolic
subgroup such that $\mathfrak{p}:=\operatorname{Lie}(P)$ has Levi $\lf$, $\Orb'\subset \lf^*$ is a nilpotent
orbit, and $\xi\in (\lf/[\lf,\lf])^*$.
Form the variety $G\times^P(\xi+\overline{\Orb}'+\mathfrak{p}^\perp)$, the homogeneous
 bundle over $G/P$ with fiber $\xi+\overline{\Orb}'+\mathfrak{p}^\perp$.
This variety maps to
$\g^*$. Let $\pi$ denote the map onto its image, to be called the {\it generalized
Springer map}. Let $\Orb_\xi$ denote the open orbit in $\operatorname{im}\pi$, it is
known to be independent of the choice of $P$. Note that, similarly to \cite[Theorem 1.3]{LS}, $(\xi+\overline{\Orb}'+\mathfrak{p}^\perp)\cap \Orb_\xi$ is a single $P$-orbit.

\begin{Lem}\label{Lem:orbit_independ}
The open $G$-orbit in  $G\times^P(\xi+\overline{\Orb}'+\mathfrak{p}^\perp)$
depends only on $(\lf,\Orb',\xi)$, not on the choice of $P$.
\end{Lem}

In the proof we will need the following construction, also to be used later.
Let $\Orb$ denote a nilpotent orbit in $\g^*$. Pick an element
$\chi\in \Orb$ and let $Q$ be a maximal reductive subgroup of the stabilizer $G_\chi$.
Also we can pick an invariant form on $\g$ and identify $\g\cong \g^*$.
Let $e\in \g$ be the element corresponding to $\chi$. Include $e$
into an $\slf_2$-triple $e,h,f$. We can assume that $h,f$ are $Q$-stable,
then $Q=Z_G(e,h,f)$. The component group $Q/Q^\circ$ is identified with the $G$-equivariant fundamental group of $\Orb$, to be denoted by
$A(\Orb)$.

Consider the Slodowy slice $e+\mathfrak{z}_\g(f)$
and let $S$ denote its image in $\g^*$. Then $S$ is a transversal slice
to $\Orb$. Note that it is a Poisson variety with a $Q$-action.
We also have a contracting $\C^\times$-action on $S$: let $\gamma:
\C^\times\rightarrow G$ denote the one-parameter subgroup corresponding
to $h$, then we define a $\C^\times$-action on $S$ via
$t.s:=t^{-2}\gamma(t)s$. This action contracts $S$ to $\chi$. The proofs can be found e.g. in
\cite[Section 2]{GG}.

\begin{proof}[Proof of Lemma \ref{Lem:orbit_independ}]
The proof is in several steps.

{\it Step 1}.
The claim easily reduces
to the case when $\xi$ is central (by replacing $G$ with the stabilizer $G_{\xi}$).
We can then assume that $\xi=0$. Let $\Orb$ denote the open orbit
in $\operatorname{im}\pi$. Let $X':=\operatorname{Spec}(\C[\Orb'])$.
Set $\check{X}=
G\times^P((\lf/[\lf,\lf])^*\times X'\times \mathfrak{p}^\perp)$.
The open orbit in $\check{X}$ coincides with that in
$G\times^P(\overline{\Orb}'+\mathfrak{p}^\perp)$.

The variety $\check{X}$ naturally
maps to $\g^*$ and also to $(\lf/[\lf,\lf])^*$. The morphism to $(\lf/[\lf,\lf])^*$ is flat, $G$-invariant
and $\C^\times$-equivariant. Note that
\begin{equation}\label{eq:dim_formula}
\dim \check{X}=\dim (\lf/[\lf,\lf])^*+\dim \Orb.
\end{equation}

{\it Step 2}. Consider the preimage $\check{S}$ of $S$ in $\check{X}$ and the restriction of $\check{X}\rightarrow (\lf/[\lf,\lf])^*$ to
$\check{S}$. Our goal is to prove that this restriction is finite and \'{e}tale.

We start by showing $\dim \check{S}=\dim (\lf/[\lf,\lf])^*$.
The slice $S$ is transverse to all $G$-orbits it intersects, see, e.g., \cite[Section 2.2]{GG}.
Equivalently, the action morphism $G\times S\rightarrow \g^*$ is smooth of relative dimension $\dim S$. Since being smooth is stable under pullback, the action morphism $G\times \check{S}\rightarrow \check{X}$ is also
smooth of relative dimension $\dim S$. It follows that
\begin{align*}\dim \check{S}=\dim \check{X}+\dim S-\dim \g=(\dim (\lf/[\lf,\lf])^*+\dim \Orb)+\dim S-\dim\g=\dim (\lf/[\lf,\lf])^*.
\end{align*}
The first equality follows from (\ref{eq:dim_formula}), and the second  follows because $\dim \g=\dim S+\dim \Orb$.

{\it Step 3}. The variety $\check{S}$ inherits actions of $\C^\times$ and $Q$ from $S$. The morphism
$\check{S}\rightarrow (\lf/[\lf,\lf])^*$ is $Q$-invariant and $\C^\times$-equivariant by the construction.
We claim that the $\C^\times$-action on $\check{S}$ is contracting. Indeed, the morphism
$\check{X}\rightarrow \g^*\times (\lf/[\lf,\lf])^*$ is projective. It follows that the morphism
$\check{S}\rightarrow S\times (\lf/[\lf,\lf])^*$ is projective. This morphism is $\C^\times$-equivariant.
The $\C^\times$-action on $S\times (\lf/[\lf,\lf])^*$ contracts $S\times (\lf/[\lf,\lf])^*$ to a point. Hence the action
on $\check{S}$ is contracting as well.

{\it Step 4}. We claim that the morphism $\check{S}\rightarrow (\lf/[\lf,\lf])^*$ is unramified. Thanks to
the contracting $\C^\times$-action we need to show that the fiber over zero is a reduced finite scheme. This fiber coincides with the scheme theoretic preimage of $S$ in $G\times^P(X'\times \mathfrak{p}^\perp)$.
The morphism $G\times^P(X'\times \mathfrak{p}^\perp)\rightarrow \overline{\Orb}$ is \'{e}tale over
$\Orb$. The slice $S$ intersects $\Orb$ transversally at the single point, $\chi$, and does not intersects
the boundary of $\Orb$. It follows that the scheme theoretic preimage of $S$ in $G\times^P(X'\times \mathfrak{p}^\perp)$ coincides with the preimage of $\chi$ in the open orbit of
$G\times^P(X'\times \mathfrak{p}^\perp)$. This and the claim that the intersection of
$S$ with $\Orb$ is transversal show the claim in the beginning of the step.
Note that the preimage of $S$ in $G\times^P(X'\times \mathfrak{p}^\perp)$ is a single $Q/Q^\circ$-orbit.

%Note that $\check{X}$ is  a normal variety
%because $X'$ is normal.
%Since $G\times \check{S}\rightarrow \check{X}$ is smooth,
%it follows that $G\times \check{S}$
%, $\check{S}$ is normal as well,
%and $\dim \check{S}=\dim (\lf/[\lf,\lf])^*$.
{\it Step 5}. We have seen in Steps 2-4 that $\dim \check{S}=\dim (\lf/[\lf,\lf])^*$ and the morphism $\check{S}\rightarrow (\lf/[\lf,\lf])^*$ is
$\C^\times$-equivariant and etale.  Since the $\C^\times$-action on $\check{S}$ is contracting
(Step 3) and the preimage of $0\in (\lf/[\lf,\lf])^*$ is finite, the morphism $\check{S}\rightarrow (\lf/[\lf,\lf])^*$
is finite.
The group $Q$ acts on $\check{S}$ and the map
$\check{S}\rightarrow (\lf/[\lf,\lf])^*$ is $Q$-invariant.
It follows that $Q^\circ$ acts on $\check{S}$ trivially.

{\it Step 6}. As we have pointed out in Step 4,
the preimage of $\chi$ in $\check{S}$ is a single $Q/Q^\circ$-orbit, that is
the preimage of $\chi$ in the open orbit in $G\times^P(X'\times \mathfrak{p}^\perp)$.
Since the morphism $\check{S}\rightarrow (\lf/[\lf,\lf])^*$ is
\'{e}tale and finite, the fiber of $\check{S}$ over any point of $(\lf/[\lf,\lf])^*$ is the same $Q/Q^\circ$-orbit. Also note that the fiber of $\check{X}\rightarrow (\lf/[\lf,\lf])^*$ over
a generic point  $\xi_1\in(\lf/[\lf,\lf])^*$ is independent of the choice of $P$: this fiber is
$G\times^L (\xi_1\times X')$. The morphisms $G\times^L (\xi_1\times X')\rightarrow \g^*,(\lf/[\lf,\lf])^*$
are also independent of the choice of $P$.
So the fiber of $\check{S}\rightarrow (\lf/[\lf,\lf])^*$ over such a point is
independent of the choice of $P$. This is also the fiber over $\chi$. The claim of the lemma follows.
\end{proof}

Below we write $H$ for the stabilizer $G_x$.

Recall, Definition \ref{defi:induction}, that $(\lf,\Orb',\xi)$
is called an induction datum. When $\xi=0$ we omit it and say that
$(\lf,\Orb')$ is an induction datum.  We also can talk about
birationally induced and birationally rigid orbits.
More generally, we say that the open orbit in
$G\times^P(\xi+\overline{\Orb}'+\mathfrak{p}^\perp)$, an equivariant
cover of a coadjoint orbit,
is {\it birationally induced} from $(\lf,\Orb',\xi)$.
Finally, recall, Definition \ref{defi:induction}, that if $\pi$ is birational and $\Orb'$
is birationally rigid, then we say that $(\lf,\Orb',\xi)$
is a {\it birationally minimal induction datum}.

We have the following properties of birationally minimal induction data.

\begin{Prop}\label{Prop:bir_rigid}
For any fixed $\lf$ and birationally rigid $\Orb'\subset \lf$,
the set $(\lf/[\lf,\lf])_{\Orb'}^{*reg}$ of all $\xi\in (\lf/[\lf,\lf])^*$ such that
$(\lf,\Orb',\xi)$ is birationally minimal, is the complement to a finite union of
subspaces. Also this locus is independent of the choice of  $P$.
\end{Prop}
\begin{proof}
The independence of $P$ directly follows from Lemma \ref{Lem:orbit_independ}.

Now let $\xi\in (\lf/[\lf,\lf])^*$. We can naturally embed $\lf^*$ into $\g^*$ because there is
a unique $\lf$-invariant complement to $\lf$ in $\g$.
Hence we can view $\xi$ as an element of $\g^*$. Set $\tilde{\lf}:=\g_\xi$,
this is a Levi subalgebra.
Clearly, $\xi\in (\lf/[\lf,\lf])^{*reg}_{\Orb'}$ if and only if
the induction from $(\lf,\Orb')$ to $\tilde{\lf}$ is birational.
So the inclusion     $\xi\in (\lf/[\lf,\lf])^{*reg}_{\Orb'}$
depends only on $\tilde{\lf}$.
There is a finite number of choices of $\tilde{\lf}$.
So in order to complete the proof of the proposition, we only
need to show that if the induction from $(\lf,\Orb')$ is birational, then
so is the induction from $(\lf,\xi+\Orb')$ for any $\xi\in (\lf/[\lf,\lf])^*$.
In order to see this consider the varieties $\check{S}\subset \check{X}$
from the proof of Lemma \ref{Lem:orbit_independ}. The claim that the induction from
$(\lf,\Orb')$ is birational is equivalent to the condition that the preimage of
$0\in (\lf/[\lf,\lf])^*$ in $\check{S}$ is a single point. According to Step 5 of that
proof, the morphism $\check{S}\rightarrow (\lf/[\lf,\lf])^*$ is finite and etale.
It follows that it is an isomorphism. In particular, the preimage of $S$
in $G\times^P(\xi\times X'\times \mathfrak{p}^\perp)$ is a single point for
each $\xi\in (\lf/[\lf,\lf])^*$. So the morphism from the open
orbit in   $G\times^P(\xi\times X'\times \mathfrak{p}^\perp)$ to $\g^*$
must be generically injective. Equivalently, the induction from
$(\lf,\xi+\Orb')$ is birational.
%This follows from the last paragraph of the proof of Lemma \ref{Lem:orbit_independ}.
\end{proof}

%\begin{Lem}\label{Lem:Spring_birat}
%There is a proper union $\mathfrak{S}$ of vector subspaces in $\mathfrak{z}(\lf)$ with the following properties:
%the morphism $G\times_P(\xi+\Orb'+\mathfrak{n})\rightarrow \overline{\Orb}_\xi$
%is birational if and only if $\xi\not\in S$.
%\end{Lem}
%\begin{proof}
%Let $\tilde{\lf}$ denote the centralizer of $\xi$ in $\g$, this is a Levi subalgebra
%intermediate between $\lf$ and $\g$. Let $\xi'\in (\xi+\Orb'+\mathfrak{n})\cap \Orb_\xi$.
%Conjugating $\xi'$ by an element of $P$ we may assume that $\xi'\in \tilde{\lf}$.
%The condition of the lemma is equivalent to the claim that
%$Z_{P\cap \tilde{L}}(\xi')$ surjects onto $Z_{\tilde{L}}(\xi')$. So the condition
%of the lemma depends only on $\tilde{\lf}$. Also it is easy to see that
%the subset of $\xi$ such that the morphism is not birational is closed.
%This finishes the proof of the  lemma.
%\end{proof}

\subsection{Sheets and birational sheets}\label{SS_bir_sheet}
Recall that by a {\it sheet} in $\g$ we mean an irreducible component
of $\{x\in \g| \dim Gx=d\}$ for some $d$. Each sheet contains
a single nilpotent orbit. The rigid nilpotent orbits are precisely the
orbits $\Orb$ such that there is only one sheet containing $\Orb$ and this
sheet is $\Orb+(\g/[\g,\g])^*$, see, e.g., \cite{Borho,BK_sheet}.
In general, the sheets are indexed by the pairs $(\lf,\Orb')$, where $\Orb'$
is a rigid orbit in $\lf$: the corresponding sheet consists of orbits
induced from $(\lf,\Orb',\xi)$ for $\xi\in (\lf/[\lf,\lf])^*$.

Let $Z$ be a sheet and $\Orb$ be the unique nilpotent orbit contained in $Z$.
It turns out that the action of $G$ on $Z$ admits a geometric quotient.
Recall the subgroup $Q\subset G$ and an affine subspace $S\subset \g^*$
from Section \ref{SS_LS_induction}.   The following is
\cite[Theorems 0.3,0.4]{Katsylo}. Note that the component group of $Q$ is $A(\Orb)$.

\begin{Prop}\label{Prop:sheet_Katsylo}
The following is true:
\begin{enumerate}
\item The action of $Q^\circ$ on $Z\cap S$ is trivial and $A(\Orb)$
permutes the components of $Z\cap  S$ transitively.
\item The variety $(Z\cap S)/A(\Orb)$ is the geometric quotient
for the action of $G$ on $Z$, in particular, it is a categorical quotient
and each fiber of $Z\rightarrow (Z\cap S)/A(\Orb)$ is a single
$G$-orbit.
\end{enumerate}
\end{Prop}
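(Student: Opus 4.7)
The plan is to use the standard transversality properties of the Slodowy slice, together with the inclusion $Z\subset G\cdot S$, to produce a smooth surjection $\mu\colon G\times(Z\cap S)\to Z$, and then to derive both claims of the proposition from this together with a single geometric input: that $Gx\cap S$ is a single $Q$-orbit for each $x\in Z\cap S$.

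For the basic setup, the $\slf_2$-decomposition gives $T_e\g=T_e(Ge)\oplus T_eS$, so the multiplication map $\mu$ is smooth at $(1,e)$, and by $G$-equivariance is smooth everywhere on $G\times S$. Consequently $G\cdot S$ is open in $\g$. Since every orbit $\Orb_0\subset Z$ has $\Orb$ in its closure, $\Orb_0$ meets the open $G$-stable set $G\cdot S$ and hence lies in it, giving $Z\subset G\cdot S$. Smoothness of $\mu$ yields the transversality $T_x(Gx)+T_xS=T_x\g$ for $x\in Z\cap S$, and since $\dim Gx+\dim S=\dim\g$ on the sheet $Z$, the intersection $Gx\cap S$ is zero-dimensional, hence finite. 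From this finiteness I can already deduce the first half of (1): for $x\in Z\cap S$ the connected set $Q^\circ\cdot x\subset Gx\cap S$ must be a single point, so $Q^\circ$ fixes $x$, and $A(\Orb)=Q/Q^\circ$ acts on $Z\cap S$.

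The rest of the proposition reduces to the key claim $Gx\cap S=Q\cdot x$ for every $x\in Z\cap S$. Granting it, the transitivity of $A(\Orb)$ on irreducible components follows from irreducibility of $Z$: if the components split into two nonempty $A(\Orb)$-stable unions $(Z\cap S)'$ and $(Z\cap S)''$, the smooth images $G\cdot(Z\cap S)'$ and $G\cdot(Z\cap S)''$ are open subsets of $Z$ covering it, so their intersection is nonempty, forcing some $G$-orbit $Gx$ to meet both pieces; but $Gx\cap S$ is then a single $A(\Orb)$-orbit by the key claim, contradicting the splitting. Similarly, for part (2), the key claim gives a set-theoretic bijection $(Z\cap S)/A(\Orb)\to Z/G$ (surjective because $Z\subset G\cdot S$, injective by the claim), and smooth descent along $\mu$ of the projection-and-quotient map $G\times(Z\cap S)\to(Z\cap S)/A(\Orb)$ produces a morphism $Z\to(Z\cap S)/A(\Orb)$ whose fibers are precisely single $G$-orbits, realizing the geometric quotient.

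The principal obstacle is therefore the key claim. At $x=e$ it reduces to $Ge\cap S=\{e\}$, which follows from the contracting $\C^\times$-action on $S$ having $e$ as its unique fixed point together with $\C^\times$-invariance of $Ge$. For general $x\in Z\cap S$ the cleanest route is Katsylo's original argument: describe $Z$ as the union of images of the generalized Springer maps $G\times_P(\xi+\Orb'+\n)\to\g$ for $\xi\in\mathfrak{z}(\lf)$, where $(\lf,\Orb')$ is the rigid induction data labeling $Z$, and reduce the analysis of $Gx\cap S$ to an explicit $Q$-equivariant computation inside $\xi+\Orb'+\n$; the identification of $A(\Orb)$ with the group of components acting on the generic fiber of this map then governs both the cardinality and the $Q$-orbit structure of $Gx\cap S$.
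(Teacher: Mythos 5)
The paper does not prove this proposition: it is quoted as the main result of \cite{Katsylo} and used as a black box, so there is no argument in the paper to compare against. Assessed on its own terms, your proposal has two problems. The first is the step deriving $Z\subset G\cdot S$. You assert that every orbit $\Orb_0\subset Z$ contains $\Orb$ in its closure, but this is false: all orbits in a sheet have the same dimension $d$, so $\Orb\subset\overline{\Orb_0}$ with $\Orb\ne\Orb_0$ would place $\Orb$ inside $\overline{\Orb_0}\setminus\Orb_0$, which has dimension strictly less than $d$. Already in $\slf_2$ the closure of a nonzero semisimple orbit is the orbit itself (it is closed) and does not contain the regular nilpotent orbit, though both lie in the sheet $\g\setminus\{0\}$. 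The inclusion $Z\subset G\cdot S$ is true, but it is itself a nontrivial part of Katsylo's analysis, obtained by degenerating orbits inside the slice via the contracting Kazhdan $\C^\times$-action (and the Lusztig--Spaltenstein parametrization of $Z$), not by an orbit-closure argument in $\g$. Relatedly, smoothness of $\mu$ everywhere on $G\times S$ also needs the contracting $\C^\times$-action to spread submersivity from $(1,e)$ to all of $G\times S$; $G$-equivariance alone only gives it along the orbit of $(1,e)$.

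The second and decisive problem is that you defer the key claim $Gx\cap S=Q\cdot x$ for general $x\in Z\cap S$ to ``Katsylo's original argument.'' That equality is the entire substance of the proposition: the deductions of (1) and (2) from it that you present are routine, and your verification at $x=e$ via the contracting action is the easy special case. What you have produced is therefore a well-organized reduction of the proposition to itself. To make it a proof one must actually carry out the analysis of $Gx\cap S$, for instance by realizing $Z$ via the generalized Springer map $G\times_P(\mathfrak{z}(\lf)+\overline{\Orb}'+\n)\rightarrow\g$ and computing the fiber over $x$ as a $Q$-set; your final paragraph gestures at this but does not do it.
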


%One can characterize rigid nilpotent orbits, as follows.
%
%\begin{Lem}\label{Lem:rigid_sheet}
%The rigid orbits are precisely sheets that consist of a single orbit.
%\end{Lem}

In general, sheets do not behave well: they may intersect, may fail to be
smooth, etc. {\it Birational sheets} to be introduced now do not intersect and are very often smooth
(and always smooth up to a bijective normalization). However, there are birational sheets that do not
contain nilpotent orbits.

Pick a Levi subalgebra $\lf$ and a birationally rigid nilpotent orbit $\Orb'\subset\mathfrak{l}$.
Let $Z^{bir}_{\lf,\Orb'}$ denote the set of all orbits birationally induced from
$(\lf,\Orb',\xi)$, where $\xi\in(\lf/[\lf,\lf])_{\Orb'}^{*reg}$.

To $(\lf,\Orb')$ we assign a finite group $W(\lf,\Orb')$ acting on $(\lf/[\lf,\lf])^*$ as follows.
The group $N_G(L)$ naturally acts on the set of nilpotent orbits in $\lf^*$.
Let $N_G(L,\Orb')$ denote the stabilizer of $\Orb'$. We set $W(\lf,\Orb'):=N_G(L,\Orb')/L$.
Note that this finite group naturally acts on $(\lf/[\lf,\lf])^*$. It follows from
Proposition \ref{Prop:bir_rigid} (namely from the independence of $P$ part) that
$(\lf/[\lf,\lf])_{\Orb'}^{*reg}$ is $W(\lf,\Orb')$-stable.

The following theorem describes basic properties of birational sheets.

\begin{Thm}\label{Thm:bir_sheets}
The following is true:
\begin{enumerate}
\item Any coadjoint orbit is induced from a unique (up to $G$-cojugacy)
birationally minimal induction datum. In particular,  we have $\g=\bigsqcup_{(\lf,\Orb')}Z^{bir}_{\lf,\Orb'}$.
\item $Z^{bir}_{\lf,\Orb'}$ is a  locally closed subvariety of $\g$.
Its normalization is smooth and the morphism from the normalization
to $Z^{bir}_{\lf,\Orb'}$ is bijective.
\item A geometric quotient for the $G$-action on $Z^{bir}_{\lf,\Orb}$
exists. The normalization of the quotient  is $(\lf/[\lf,\lf])_{\Orb'}^{*reg}/W(\lf,\Orb')$.
This is a smooth variety. The morphism from the normalization to
the quotient is bijective.
%\item Two orbits $\Orb_1,\Orb_2$ lie in  the same birational sheet if
%$\C[\Orb_1],\C[\Orb_2]$ are isomorphic as $G$-modules.
\end{enumerate}
\end{Thm}

This theorem will be proved in Section \ref{SS_sheet_conseq}.

\subsection{Structure of $\Q$-terminalizations}\label{SS_nilp_Q_term}
In order to prove Proposition \ref{Prop:bir_rigid} and Theorem \ref{Thm:bir_sheets}
we will need to examine the structure of $\Q$-terminalizations. Results of this section have been
already obtained by Namikawa, \cite{Namikawa_induced}, for classical types,
and Fu, \cite{Fu}, for exceptional types. Their proofs used case-by-case
arguments, while our proof is conceptual.

Our main result in this section is the following proposition.

\begin{Prop}\label{Prop:birrig_Q_termin}
Let $\Orb$ be a birationally rigid nilpotent orbit and let
$X:=\operatorname{Spec}(\C[\Orb])$. Then the following claims hold:
\begin{enumerate}
\item $\param=\{0\}$, equivalently (Proposition \ref{Prop:X_univ_graded}),
$\C[X]$ has no nontrivial filtered Poisson deformations.
\item
$X$ is $\Q$-factorial
and terminal, and $H^2(X^{reg},\C)=\{0\}$.
\end{enumerate}
\end{Prop}
\begin{proof}
In the proof we can assume that $G$ is semisimple and simply connected.
We also can assume that the action of $G$ on $\Orb$ is faithful, equivalently,
$\Orb$ projects nontrivially to all direct summands of $\g$.

First, we show that (1) implies (2) (in fact, they are equivalent but we do not need this).
By \cite[Theorem 5.5(c)]{Namikawa1}, for a generic element $\lambda\in \param$, the morphism
$\tilde{X}_\lambda\rightarrow X_\lambda$ is an isomorphism. In particular, if $\param=\{0\}$,
then $\tilde{X}\xrightarrow{\sim}X$, hence $X$ is $\Q$-factorial and terminal. The claim that
$\param=\{0\}\Rightarrow H^2(X^{reg},\C)=\{0\}$ follows from Lemma \ref{Lem:Cartan_compute}.

%In what follows
%we will prove that (1) holds
%First, the equality $H^2(\Orb,\C)=0$
%is equivalent to $X$ being $\Q$-factorial. Let $\chi\in \Orb$, then $H^2(\Orb,\C)$ equals
%$\Hom(G_\chi,\C^\times)\otimes_{\Z}\C$. So $H^2(\Orb,\C)=0$ is equivalent to the finiteness of the %character group of $G_\chi$, which, in its turn is equivalent to the condition that %$\operatorname{Pic}(\Orb)$ is torsion. Since $\operatorname{Pic}(\Orb)\xrightarrow{\sim}
%\operatorname{Pic}(X^{reg})$ (because $\operatorname{codim}_{X}(X\setminus \Orb)\geqslant 2$)
%and  $\operatorname{Pic}(X^{reg})$ coincides with the class group of $X$, the condition that
%$\operatorname{Pic}$ is torsion is equivalent to $X$ being $\Q$-factorial. By

%First, we observe that (1)  and (2) are equivalent.
%Let $\tilde{X}$ be a $\Q$-terminalization of $X$. Assume, first, that
%$\param=\{0\}$. By Lemma \ref{Lem:Cartan_compute}, $X$ has no codimension $2$ leaves. By a result of
%Namikawa, \cite{Namikawa_term}, $X$ is terminal. Also $H^2(X^{reg},\C)=\{0\}$.
%Hence $H^2(\Orb,\C)=\{0\}$. Let $\chi\in \Orb$, then $H^2(\Orb,\C)$ equals
%$\Hom(G_\chi,\C^\times)\otimes_{\Z}\C$. We conclude that the character group
%of $G_\chi$ is finite. According to \cite[Lemma 4.1]{Fu}, $X$ is $\Q$-factorial.
%So $X$ is $\Q$-factorial terminal and $H^2(X^{reg},\C)=0$.

It remains to prove that if $\Orb$ is birationally rigid, then (1) holds, i.e.,
$\param=\{0\}$. The proof of this is in several steps.
%The main point is to show that, since $\Orb$ is birationally rigid,
%the algebra $\C[\Orb]$ has no nontrivial filtered Poisson deformations.

{\it Step 1}. Here we consider the situation when $\Orb$ is an arbitrary nilpotent orbit. We consider the  deformation $X_{\param}$. We claim that
we have a Hamiltonian $G$-action on $X_{\param}$ with moment map deforming that
on $X$.

Consider the degree $1$ component $\C[X_{\param}]_1$. Note that
$\g\hookrightarrow \C[X]_1$ and $\C[X_{\param}]_1=\C[X]_1\oplus \param$. Also note that since
$d=1$, $\C[X_{\param}]_1$ is a Lie algebra with respect to $\{\cdot,\cdot\}$
and the embedding $\g\hookrightarrow \C[X]_1$ is an inclusion of Lie algebras.
So we get an extension of $\g$ by the abelian Lie algebra $\param$.
Therefore $\g$ canonically splits. Note also that $\C[X_{\param}]$ acquires
a $\C[\param]$-linear action of $\g$ via $\{\cdot,\cdot\}$.
This action preserves the grading, hence it is locally finite. Its weights are the same as in
$\C[X]$. Hence it integrates
to $G$. So we get the required
Hamiltonian action.  Let $\mu_{\param}:X_{\param}\rightarrow \g^*$ denote the moment map
and $\mu_\lambda$ be the restriction of $\mu_{\param}$ to $X_\lambda$.

We remark that $\mu_{\param}$ is $W_X$-invariant, equivalently, the image of $\g$ in $\C[X_\param]$
is $W_X$-invariant. Recall that  $W_X$ acts on
$\C[X_\param]$ by graded Poisson automorphisms. The subspace $\C[X_{\param}]_1$ is $W_X$-stable and the projection $\C[X_{\param}]_1\twoheadrightarrow \C[X]_1$ is $W_X$-equivariant. The action on $\C[X_{\param}]_1$
is by Lie algebra automorphisms. It follows that the preimage of $\g$ in $\C[X_{\param}]_1$ is $W_X$-stable. Since the action is by Lie algebra automorphisms and $\g$ splits canonically,
$\g$ is $W_X$-stable. And since the projection of $\g$ to $\C[X]_1$ is injective, the action of
$W_X$ on $\g$ is trivial. So $\mu_\param$ is indeed $W_X$-invariant.

{\it Step 2}. We still assume that $\Orb$ is an arbitrary nilpotent orbit. We claim that, for every $\lambda\in \param$,
the $G$-action on $X_\lambda$ has an open orbit.
There is   an open $G$-orbit on $X=X_0$. The locus of $x\in X$ with $\dim Gx\geqslant \dim X$
is open and $\C^\times$-stable. Since the action of $\C^\times$ is contracting, there is an open orbit in every fiber $X_\lambda$.
So $\mu_\lambda$ is generically finite and $\mu_\lambda(X_\lambda)$
is the closure of a single orbit to be denoted by $\Orb_\lambda$. Since $\mu_0$ is finite
and the action of $\C^\times$ on $X_\param$ is contracting, $\mu_\lambda$ is finite for all $\lambda$. Note that $\overline{\Orb}\subset\overline{\C^\times \Orb_\lambda}$ and $\dim \Orb_\lambda=\dim \Orb$.

{\it Step 3}. By Step 2, $X_{\param}\quo G=\param$. It follows that $\mu_{\param}$ induces a morphism
$\param\rightarrow \g^*\quo G$. We claim that if $\param\neq \{0\}$, then the image is different from
$\{0\}$. Indeed, otherwise $\Orb_\lambda=\Orb$ for all $\lambda$.
Moreover, since $X_{\param}$ is a flat deformation of $X$, we see that
the open orbit in $X_\lambda$ is $\Orb$ and not its proper cover. We get the map $X_{\param}
\rightarrow \param\times \overline{\Orb}$ induced by $\mu_{\param}$.
This map is  finite and birational so it is the normalization. Also
it is Poisson and  $\C^\times$-equivariant. So it lifts to a
$\C^\times$-equivariant Poisson isomorphism $X_{\param}\xrightarrow{\sim} X\times \param$.
Since $X_\param/W_X$ is a universal deformation, this implies $\param=\{0\}$. Contradiction. We see that if $\param\neq \{0\}$,
then the image of $X_{\param}$ in $\g$ contains non-nilpotent elements.

{\it Step 4}. Now assume that $\Orb$ is birationally rigid. Pick a Zariski generic $\lambda\in  \param$. Let $\xi$ be the
semisimple part of an element in $\Orb_\lambda$, $\lf$ be the stabilizer
of $\xi$ in $\g$ and $\Orb'$ be the nilpotent orbit in $\lf$ such that
$\xi+\Orb'\subset \Orb_\lambda$. We claim that $\Orb$ is birationally induced
from $(\lf,\Orb')$. Namely, let $X'$ be the normalization of $\overline{\Orb}'$
and $\tilde{X}'$ be its $\Q$-terminalization.
Consider the variety $\tilde{X}^1_{\C\xi}:=G\times^P(\C\xi\times \tilde{X}'\times \mathfrak{p}^\perp)$.
Here $\mathfrak{p}$ be a parabolic subalgebra with Levi subalgebra $\lf$ and
$P$ is the corresponding subgroup. The  variety $\tilde{X}^1_{\C\xi}$ is a normal Poisson $\C\xi$-scheme
and has a Hamiltonian $G$-action. Its algebra of regular functions is finite over $\C[\g^*]$
hence is finitely generated. Let $X^1_{\C\xi}:=\operatorname{Spec}(\C[\tilde{X}^1_{\C\xi}])$.
Note that the fiber of $X^1_{\C\xi}$ over $\xi$ is the normalization of $\overline{\Orb}_\lambda$. The fiber over $0$ is $X^1:=\operatorname{Spec}(\C[\tilde{X}^1])$,
where $\tilde{X}^1=G\times^P(\tilde{X}'\times \mathfrak{p}^\perp)$. Note that we get
a finite $G$-equivariant morphism $X^1\twoheadrightarrow \overline{\Orb}$ that factors
through $X^1\twoheadrightarrow X$.
%By the flatness considerations, we see
%that $\C[X_\lambda]\cong \C[X^1]$ and $\C[X_\lambda]\cong \C[X]$ as $G$-modules.

Since the morphism $X_\param\rightarrow \param$ is $G$-invariant and flat, for any irreducible $G$-module $V$, we have that $\Hom_G(V,\C[X_{\param}])$ is flat over $\C[\param]$. As was mentioned in the beginning of Step 3, $\C[X_{\param}]^G\xrightarrow{\sim}\C[\param]$. So $\Hom_G(V,\C[X_{\param}])$ is also finitely generated $\C[\param]$-module, hence is projective. It follows that $\C[X]\cong \C[X_\lambda]$ as $G$-modules.
Analogously, $\C[X^1]\cong \C[X^1_\lambda]$. We have $G$-equivariant finite dominant
morphisms $X^1\twoheadrightarrow X$ and $X_\lambda\twoheadrightarrow X^1_\lambda$,
the latter is induced by the moment map $\mu_\lambda$. Hence $\C[X^1]\cong \C[X]$ as $G$-modules.
We conclude that $X^1\xrightarrow{\sim} X$. This contradicts $\Orb$ being birationally
rigid and completes the proof of the proposition.
\end{proof}

\begin{Cor}\label{Cor_nilp_Q_termin}
Let $\Orb$ be a nilpotent orbit. Then the following claims are true.
\begin{enumerate}
\item  There is a unique (up to $G$-conjugacy)
birationally minimal induction datum  $(\lf,\Orb')$
for $\Orb$.
\item  The variety  $\tilde{X}:=G\times^P (X'\times \mathfrak{p}^\perp)$ (where $X'$
stands for $\operatorname{Spec}(\C[\Orb'])$) is a $\Q$-terminalization
of $X$.
\end{enumerate}
\end{Cor}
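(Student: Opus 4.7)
The plan is to combine iterated birational induction with the structural results of Proposition~\ref{Prop:birrig_Q_termin} and the universal deformation from Corollary~\ref{Cor:X_univ_deform}. For existence in (1), if $\Orb$ is birationally rigid take $(\lf,\Orb')=(\g,\Orb)$; otherwise $\Orb$ is birationally induced from some $(\lf_1,\Orb_1')$ with $\lf_1\subsetneq\g$, and by transitivity of birational induction (the composition of two birational generalized Springer maps is birational, via the fibered-product decomposition $G\times_{P_2}(X_2'\times\mathfrak{n}_2)=G\times_{P_1}[L_1\times_{P_1'}(X_2'\times\mathfrak{n}_2')\times\mathfrak{n}_1]$ for $P_2\subset P_1$) one iterates inside $\lf_1$. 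The process terminates because $\dim\lf$ strictly decreases at each step.

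For (2), set $\tilde{X}:=G\times_P(X'\times\mathfrak{n})$ with $X':=\Spec(\C[\Orb'])$. By Proposition~\ref{Prop:birrig_Q_termin} applied inside $\lf$, $X'$ is a conical symplectic singularity that is $\Q$-factorial terminal. Hence $X'\times\mathfrak{n}$ is $\Q$-factorial terminal, and since $\tilde{X}\to G/P$ is a locally trivial fibration with smooth base, $\tilde{X}$ is normal, $\Q$-factorial, and terminal. The Poisson structure on $\tilde{X}$ arises from the fact that its regular locus maps isomorphically (by birationality of the induction) onto an open subvariety of the coadjoint orbit $\Orb\subset\g$, pulling back the Kirillov--Kostant symplectic form, which then extends to a Poisson bracket on the normal variety $\tilde{X}$. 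The generalized Springer morphism $\tilde{X}\to\g$ is projective with image $\overline{\Orb}$ and birational by hypothesis, so by normality of $\tilde{X}$ it factors uniquely through the normalization $X$, producing the required birational projective morphism $\tilde{X}\to X$; all conditions of Proposition~\ref{Prop:terminalization} are thereby met.

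The main obstacle is uniqueness in (1): one must recover $(\lf,\Orb')$ intrinsically from $X$, independently of any choice, and match it with any given datum. Applying Steps~1--4 of the proof of Proposition~\ref{Prop:birrig_Q_termin} to the canonical universal Poisson deformation $X_\param$ from Corollary~\ref{Cor:X_univ_deform}, the Hamiltonian $G$-action extends to $X_\param$, and for Zariski-generic $\lambda\in\param$ the image $\mu_\lambda(X_\lambda)$ is the closure of a single orbit $\Orb_\lambda$; letting $\xi$ be the semisimple part of a generic element of $\Orb_\lambda$, $\lf:=\mathfrak{z}_\g(\xi)$, and $\Orb'$ the nilpotent orbit in $\lf$ of the nilpotent part, Step~4 shows $\Orb$ is birationally induced from $(\lf,\Orb')$. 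To match this with any given $(\lf_0,\Orb_0')$ yielding a birationally rigid datum, the corresponding $\tilde{X}_0=G\times_{P_0}(X_0'\times\mathfrak{n}_0)$ has parameter space $H^2(\tilde{X}_0^{reg},\C)\cong H^2(G/P_0,\C)\cong\mathfrak{z}(\lf_0)$ (via the Leray spectral sequence for the fibration over $G/P_0$ together with the vanishing $H^2(X_0'^{reg},\C)=0$ of Proposition~\ref{Prop:birrig_Q_termin} and $H^1(X_0'^{reg},\C)=0$ of Lemma~\ref{Lem:homol_vanish}). The $\mathfrak{z}(\lf_0)$-family is tautologically $\xi\mapsto G\times_{P_0}(\xi+X_0'+\mathfrak{n}_0)$, whose generic fiber has semisimple part with centralizer $\lf_0$; applying the intrinsic construction to it forces $\lf_0=\lf$ and $\Orb_0'=\Orb'$ up to $G$-conjugacy, giving uniqueness.
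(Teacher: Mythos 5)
Your proposal is correct and follows essentially the same strategy as the paper: existence in (1) via iterated birational induction, part (2) by invoking Proposition~\ref{Prop:birrig_Q_termin} to see that $X'$ (and hence $\tilde{X}$) is $\Q$-factorial terminal, and uniqueness in (1) by extracting $(\lf,\Orb')$ from the canonical universal Poisson deformation $X_\param/W_X$ as in Step~4 of the proof of Proposition~\ref{Prop:birrig_Q_termin}. The only point worth flagging is your description of the Poisson structure on $\tilde{X}$: the symplectic form on $\tilde{X}^{reg}=G\times_P(X'^{reg}\times\n)$ is an \emph{intrinsic} structure coming from symplectic induction (the twisted cotangent construction over $G/P$), and it is \emph{not} a pullback of the Kirillov--Kostant form — only the open dense $G$-orbit $G/H\subset\tilde{X}^{reg}$ maps isomorphically to $\Orb$, and the locus $\tilde{X}^{reg}\setminus (G/H)$ is strictly positive-dimensional in general (so there is also no Hartogs-type extension argument available for $2$-forms). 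None of this affects the validity of your overall argument, since the Poisson structure on $\tilde{X}$ exists for the standard reason and this is all that the rest of the proof uses.
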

\begin{proof}
We start by proving (2). We can find some minimal birational induction datum $(\lf,\Orb')$
for $\Orb$.
The variety $X'$ is $\Q$-factorial terminal by Proposition \ref{Prop:birrig_Q_termin}.
Hence so is $\tilde{X}$.  Hence it is a $\Q$-terminalization
of $X$.

Let us prove (1). We  need to prove that $(\lf,\Orb')$
is unique up to conjugacy. Note that the morphism
$X_{\param}/W_X\twoheadrightarrow \param/W_X$ is uniquely recovered from $X$
thanks to Proposition \ref{Prop:X_univ_graded} and does not depend on the choice of the $\Q$-terminalization
$\tilde{X}=G\times^P (X'\times \mathfrak{p}^\perp)$, i.e., the choices of $P$ and $(\lf,\Orb')$.
For $\lambda\in \param$, the moment map $X_\lambda\rightarrow \g^*$ is also independent of these
choices.
According to Step 4 of the proof Proposition \ref{Prop:birrig_Q_termin}, we recover $(\lf,\Orb')$
as follows. Consider the fiber $X_\lambda$ of $X_{\param}$ over a Zariski generic point $\lambda\in \param$. Let $\Orb_\lambda$ denote the open orbit in the image of $X_\lambda$ in $\g^*$ and let $x\in \Orb_\lambda$. Then $\lf$ is the stabilizer of the semisimple part of $x$ and
and $\Orb'$ is the $L$-orbit of the nilpotent part of $x$.
%On the other hand, we recover $(\lf,\Orb')$
%as in Step 4 of the proof of Proposition \ref{Prop:birrig_Q_termin} (we note that
%$X_{\param}/W_X$ and $\param/W_X$ do not depend on the choice of $\tilde{X}$ and so
%$(\lf,\Orb')$ is completely determined by $\Orb$). This proves (1).
\end{proof}

\subsection{Computation of Weyl groups}\label{SS_Weyl_comput}
In this section we will  get some information on the Namikawa-Weyl group $W_X$
for $X:=\operatorname{Spec}(\C[G/H])$, where $G/H$ is the open $G$-orbit in
 $\tilde{X}=G\times^P(X'\times \mathfrak{p}^\perp)$. Here $X'=\operatorname{Spec}(\C[\Orb'])$
for a birationally rigid nilpotent orbit  $\Orb'\subset \mathfrak{l}^*$.

Our main result is as follows.

\begin{Prop}\label{Prop:NW_comput}
Assume $G$ is semisimple. The following claims are true:
\begin{enumerate}
\item
We have $\param=(\lf/[\lf,\lf])^*$.
\item The universal deformation $\tilde{X}_\param$ of $\tilde{X}$ over $\param$
(Proposition \ref{Prop:KV_deform}) is isomorphic to $\check{X}:=G\times^P(\param\times X'\times \mathfrak{p}^\perp)$ as Poisson scheme over $\param$ with a $\C^\times$-action.
\item
The Namikawa-Weyl group $W=W_X$ of $X$ is a normal subgroup in $W(\lf,\Orb')$. Moreover,
the quotient $W(\lf,\Orb')/W$ is isomorphic to the group $A$ of $G$-equivariant Poisson automorphisms
of $X$.
\end{enumerate}
\end{Prop}

The group $A$ is naturally identified with
$N_{Z_G(x)}(H)/H$, where $x\in G/H$. In particular, if $H=Z_G(x)$, we see that $W=W(\lf,\Orb')$. A formally weaker result
(where $\Orb'=\{0\}$) was  obtained in \cite[Section 2]{Namikawa2}. Note also that $A$
coincides with the group of graded $G$-equivariant automorphisms of $\C[X]$.

\begin{proof}[Proof of Proposition \ref{Prop:NW_comput}]
To compute $\param$ we note that since $H^i(X'^{reg},\C)=\{0\}$ for $i=1,2$ (see
Proposition \ref{Prop:birrig_Q_termin}) and $H^1(G/P,\C)=0$, we get
$H^2(\tilde{X}^{reg},\C)=H^2(G/P,\C)=(\lf/[\lf,\lf])^*$.

Now we prove (2). Note that $\check{X}$ is a graded deformation of $\tilde{X}$ in the sense of Section
\ref{SS_filt_deform}. By Proposition \ref{Prop:KV_deform}, there is a unique linear map
$(\lf/[\lf,\lf])^*\rightarrow \param$ and a $\C^\times$-equivariant Poisson isomorphism
$\check{X}\xrightarrow{\sim} (\lf/[\lf,\lf])^*\times_{\param} \tilde{X}_\param$. We need to show that
$(\lf/[\lf,\lf])^*\rightarrow \param$ is an isomorphism. Thanks to (1) it is enough to show it is
injective. Assume the contrary: let $\lambda\in (\lf/[\lf,\lf])^*$ go to $0$. This yields an
isomorphism $G\times^P(X'\times \mathfrak{p}^\perp)\xrightarrow{\sim} G\times^P(\{\lambda\}\times X'\times \mathfrak{p}^\perp)$. In particular, we have an embedding $G/P\hookrightarrow G\times^P(\{\lambda\}\times X'\times \mathfrak{p}^\perp)$. The target is a homogeneous bundle over the affine variety  $G/G_\lambda$
with fiber $G_\lambda\times^{G_\lambda\cap P}(\{\lambda\}\times X'\times \mathfrak{p}^\perp)$.
Any morphism from $G/P$ to $G/G_\lambda$ maps $G/P$ to a single point because $G/P$ is projective and connected while $G/G_\lambda$ is affine. So we have a closed embedding
$G/P\hookrightarrow G_\lambda\times^{G_\lambda\cap P}(\{\lambda\}\times X'\times \mathfrak{p}^\perp)$.
The latter variety is a bundle over the projective variety $G_\lambda/(G_\lambda\cap P)$
whose fiber is affine. So the composition $$G/P\hookrightarrow G_\lambda\times^{G_\lambda\cap P}(\{\lambda\}\times X'\times \mathfrak{p}^\perp)\twoheadrightarrow G_\lambda/(G_\lambda\cap P)$$
is a finite morphism. The latter is impossible because $\dim G/P>\dim G_\lambda/(G_\lambda\cap P)$.
This contradiction finishes the proof of the claim that the linear map $(\lf/[\lf,\lf])^*\rightarrow \param$ is an isomorphism and hence the proof of (2).

(3) will be proved in several steps.

{\it Step 1}. Note that, for a Zariski generic $\xi\in (\lf/[\lf,\lf])^*$, the orbits
$\Orb_\xi$ and $\Orb_{\xi'}$ induced from $(\lf,\Orb',\xi)$ and $(\lf,\Orb',\xi')$,
respectively, coincide (here $\xi'\in(\lf/[\lf,\lf])^*$) if
and only if $\xi'\in W(\lf,\Orb')\xi$. It follows that, for a Zariski generic
$\lambda\in (\lf/[\lf,\lf])^*$, the equality $\mu_\lambda(X_\lambda)=
\mu_{\lambda'}(X_{\lambda'})$ implies $\lambda'\in W(\lf, \Orb')\lambda$.
But the moment map $\mu_{\param}$ is $W$-invariant, see
Step 1 of the proof of Proposition \ref{Prop:birrig_Q_termin}.
It follows that  $W\subset W(\lf,\Orb')$.

{\it Step 2}. We are going to produce a group homomorphism $W(\lf,\Orb')\rightarrow A$.
%For this, we will show that $w\in W(\lf,\Orb')$ induces a filtered Poisson algebra
%isomorphism $\C[X_\lambda]\rightarrow \C[X_{w\lambda}]$.
Pick a Zariski generic element $\lambda\in (\lf/[\lf,\lf])^*$.
Consider the deformation
$X_{\C\lambda}$ of $X$ over $\C\lambda$. It comes with the morphism $X_{\C\lambda}\rightarrow \C\lambda\times_{\g^*\quo G}
\overline{\C^\times \Orb_\lambda}$ that is a normalization morphism. Note that
$w\in W(\lf,\Orb')$ defines a $\C^\times$-equivariant morphism
$$\C\lambda\times_{\g^*\quo G}\overline{\C^\times \Orb_\lambda}\xrightarrow{\sim}
\C w\lambda\times_{\g^*\quo G}\overline{\C^\times \Orb_{w\lambda}}$$
and hence a $\C^\times$-equivariant isomorphism $X_{\C\lambda}\rightarrow X_{\C w\lambda}$.
This isomorphism is $G$-equivariant and intertwines the moment maps, hence it is Poisson.
Specializing to $0$, we get an element of $A$ to be denoted by $a_{w,\lambda}$.
Note that the group $A$ is finite.
So varying $\lambda$, we get the same element $a_{w,\lambda}$, we will write $a_{w}$ for $a_{w,\lambda}$.
Since $a_{w_1w_2,\lambda}=a_{w_1,w_2\lambda}a_{w_2,\lambda}$ by the construction, we see that $w\mapsto
a_w$ is a group homomorphism.

{\it Step 3}. Let us show that $W\subset W(\lf,\Orb')$ is the kernel of the homomorphism
$w\mapsto a_w$. By the previous step, $w$ induces a filtered Poisson algebra
isomorphism $\C[X_\lambda]\rightarrow \C[X_{w\lambda}]$ such that the induced
automorphism of $\C[X]$ is $a_w$. So  $w\in W$ if and only if
the isomorphism $\C[X_\lambda]\xrightarrow{\sim}\C[X_{w\lambda}]$ is that of
filtered deformations if and only if $a_w=1$.

{\it Step 4}. Let us prove that the homomorphism $W(\lf,\Orb')\rightarrow A$
is surjective. Recall that $A$ acts on $\C[\param]^W$ by graded algebra
automorphisms and on $\C[X_{\param}]^W$ by graded Poisson algebra automorphisms.
Under this action, $\g\subset \C[X_{\param}]^W$ stays fixed. It follows that if $W\lambda, W\lambda'$ are $A$-conjugate, then
$\mu_\lambda(X_\lambda)=\mu_{\lambda'}(X_{\lambda'})$.

Also if $a\in A$
acts trivially on $\C[\param]^W$, then it acts trivially on $\param/W\times_{\g^*\quo G}\g^*$
and hence on its normalization $X_{\param}/W$. Therefore $a=1$.

 In particular,
we can take a Zariski generic $\lambda$ and use Step 1 to see that if $a$ sends $W\lambda$
to $W\lambda'$, then $\lambda,\lambda'$
are $W(\lf,\Orb')$-conjugate. Since $A$ acts faithfully on $\param/W$, this implies the surjectivity of $W(\lf,\Orb')\rightarrow A$.
%Namely, let $\lambda\in (\lf/[\lf,\lf])^*$ be Zariski generic. Then the deformation
%$X_\lambda=G\times_L(\lambda+X')$, where $X':=\operatorname{Spec}(\C[\Orb'])$.
%The filtration on $\C[X_\lambda]$ is that of the quotient of the graded algebra
%$\C[X_{\C\lambda}]$, where $X_{\C\lambda}$ is the affinization
%of $\tilde{X}_{\C\lambda}=G\times_P(\C\lambda\times X'\times \mathfrak{n})$.
%Let $z$ be a linear coordinate on $\C\lambda$ normalized so that $\lambda$
%has coordinate $1$. We have $\C^\times$-equivariant
%$$\C[X_{\C\lambda}][z^{-1}]=\C[G\times_P(\C^\times \lambda)]$$
\end{proof}

Below we will need to relate Weyl groups for inductions to  $\g$ and to some Levi
subalgebra of $\g$. Namely, take a Levi $\lf\subset\g$ and a birationally rigid
nilpotent orbit $\Orb'\subset\lf^*$. Let $\underline{\g}$ denote a Levi subalgebra
of $\g$ containing $\lf$. Let $\underline{\Orb}$ be the nilpotent orbit in
$\underline{\g}^*$ induced from $(\lf,\Orb')$ and assume that the induction
is birational. Let $\underline{X}$ denote the normalization of $\overline{\underline{\Orb}}$
and let $X:=\operatorname{Spec}(\C[G\times^P(X'\times\mathfrak{p}^\perp)])$.

\begin{Lem}\label{Lem:Weyl_stab}
The group $W_{\underline{X}}$ is contained in  the pointwise stabilizer of
$(\underline{\g}/[\underline{\g},\underline{\g}])^*\subset (\lf/[\lf,\lf])^*$ in $W_X$.
\end{Lem}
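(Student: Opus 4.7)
The plan is to show the inclusion $W_{\underline{X}}\subset W_X$ as subgroups of $W^G_{\lf,\Orb'}:=N_G(L,\Orb')/L$; by Proposition \ref{Prop:NW_comput} one has $W_X\subset W^G_{\lf,\Orb'}$ and $W_{\underline{X}}\subset W^{\underline{G}}_{\lf,\Orb'}\subset W^G_{\lf,\Orb'}$, with both actions on $\mathfrak{z}(\lf)$ coming from conjugation in the respective ambient group. Once this inclusion is established, the pointwise fixing of $\mathfrak{z}(\underline{\g})$ will be immediate: elements of $W_{\underline{X}}$ are represented by elements of $\underline{G}$, and conjugation by $\underline{G}$ fixes $\mathfrak{z}(\underline{\g})$ pointwise by definition.

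To prove $W_{\underline{X}}\subset W_X$, I will first fix compatible parabolics $P\subset \underline{P}\subset G$ with Levi factors $L\subset \underline{L}=\underline{G}$, and let $P'\subset \underline{L}$ be the parabolic with Levi $L$ such that $P=P'\operatorname{Rad}_u(\underline{P})$. Writing $\mathfrak{n}_{\underline{P}}$ and $\mathfrak{n}'$ for the nilradicals of $\underline{\p}$ and $\mathfrak{p}'$ (so $\mathfrak{n}=\mathfrak{n}'+\mathfrak{n}_{\underline{P}}$), the generalized Springer variety over $\mathfrak{z}(\lf)$ used to build $X_{\mathfrak{z}(\lf)}$ factors as
\[
G\times_P(\mathfrak{z}(\lf)+X'+\mathfrak{n}) \;\cong\; G\times_{\underline{P}}\bigl(\underline{\tilde{X}}_{\mathfrak{z}(\lf)}\times \mathfrak{n}_{\underline{P}}\bigr),
\]
where $\underline{\tilde{X}}_{\mathfrak{z}(\lf)} := \underline{L}\times_{P'}(\mathfrak{z}(\lf)+X'+\mathfrak{n}')$. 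The hypothesis that $\underline{\Orb}$ is birationally induced from $(\lf,\Orb')$ gives $\C[\underline{\tilde{X}}_{\mathfrak{z}(\lf)}]=\C[\underline{X}_{\mathfrak{z}(\lf)}]$, and passing to global sections then yields an identification
\[
\C[X_{\mathfrak{z}(\lf)}] \;\cong\; \operatorname{ind}_{\underline{P}}^G\!\bigl(\C[\underline{X}_{\mathfrak{z}(\lf)}]\otimes \C[\mathfrak{n}_{\underline{P}}]\bigr)
\]
of $G$-equivariant graded Poisson $\C[\mathfrak{z}(\lf)]$-algebras, the Poisson structure on the right-hand side coming from the moment-map embedding $\underline{X}_{\mathfrak{z}(\lf)}\times \mathfrak{n}_{\underline{P}}\hookrightarrow \underline{\p}\subset \g$.

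Given $w\in W_{\underline{X}}$, Proposition \ref{Prop:deform_iso} applied to $\underline{X}$ produces, for each $\lambda\in\mathfrak{z}(\lf)$, a filtered Poisson algebra isomorphism $\psi_\lambda:\C[\underline{X}_\lambda]\xrightarrow{\sim}\C[\underline{X}_{w\lambda}]$ of deformations of $\C[\underline{X}]$; by semisimplicity of $\underline{G}$ one can arrange $\psi_\lambda$ to be $\underline{G}$-equivariant. Extending by the identity on $\C[\mathfrak{n}_{\underline{P}}]$ and applying $\operatorname{ind}_{\underline{P}}^G$ then produces a filtered Poisson isomorphism $\C[X_\lambda]\xrightarrow{\sim}\C[X_{w\lambda}]$ of deformations of $\C[X]$. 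Proposition \ref{Prop:deform_iso} applied to $X$ forces $w\lambda\in W_X\lambda$ for every $\lambda$; since $w$ acts linearly on $\mathfrak{z}(\lf)$, a standard fact about reflection-group quotients (any linear $w\in \GL(\mathfrak{z}(\lf))$ descending to the identity on $\mathfrak{z}(\lf)/W_X$ lies in $W_X$) yields $w\in W_X$. The main technical obstacle will be to verify that the displayed induction identification is genuinely Poisson-natural and functorial under $\underline{G}$-equivariant filtered Poisson isomorphisms of $\C[\underline{X}_\lambda]$; granting this, the rest is bookkeeping.
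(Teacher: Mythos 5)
Your proposal takes a genuinely different route from the paper's proof, and it is structurally sound; but the weight of the argument sits precisely on the one step you flag as the ``main technical obstacle,'' so the claim that ``the rest is bookkeeping'' somewhat undersells what remains to be done. The paper proves the lemma by analysing the partial resolution $\rho\colon G\times_{\tilde P}(\underline{X}\times\tilde{\mathfrak n})\to X$ at formal slices to codimension-two leaves: $\rho^{-1}(\Sigma_i^{\wedge})$ is a partial resolution of the Kleinian singularity $\Sigma_i$ obtained by contracting a $\pi_1(\mathcal L_i)$-stable set of exceptional components, which carves out a parabolic subgroup $\underline{\widehat W}_i\subset\widehat W_i$; this identifies $W_{\underline X}$ inside $W_X$ and simultaneously shows that deforming in the $\mathfrak z(\underline{\g})$-directions does not move these slices, so $\underline{\widehat W}_i$ fixes the image of $\mathfrak z(\underline{\g})$ in $\widehat{\param}_i$. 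That argument is local and self-contained. Your argument is global and deformation-theoretic: you embed both Weyl groups into $W^G_{\lf,\Orb'}=N_G(L,\Orb')/L$ via Proposition \ref{Prop:NW_comput}, induce deformation isomorphisms from $\underline{X}$ up to $X$, and close with the standard fact that a linear map $w$ satisfying $w\lambda\in W_X\lambda$ for all $\lambda$ lies in $W_X$. What your route buys is conceptual clarity (no formal-slice bookkeeping, and the pointwise-fixing statement drops out for free from $W_{\underline X}\subset N_{\underline G}(L,\Orb')/L$); what it costs is the functoriality lemma you defer.

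Two concrete points to fix. First, the justification for $\underline{G}$-equivariance of $\psi_\lambda$ is wrong as stated: $\underline G$ is a Levi subgroup, hence reductive with a nontrivial centre, not semisimple. The correct reason is built into the construction of the Namikawa action: by Step 1 of the proof of Proposition \ref{Prop:birrig_Q_termin}, $\underline{\g}\subset\C[\underline X_\param]$ is $W_{\underline X}$-fixed (the moment map is $W$-invariant), so the $W_{\underline X}$-action commutes with the Hamiltonian $\underline G$-action, and the resulting $\psi_\lambda$ is automatically $\underline G$-equivariant and moment-map-intertwining (using that $W_{\underline X}$ fixes $\mathfrak z(\underline{\g})$ pointwise, which you should therefore establish \emph{before}, not after, the inclusion $W_{\underline X}\subset W_X$). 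Second, the functoriality gap is real: you need that the association $\underline{X}_\lambda\mapsto \Gamma(G\times_{\underline P}(\underline{X}_\lambda^{reg}\times\mathfrak n_{\underline P}))$ is functorial in $\underline G$-equivariant Poisson isomorphisms intertwining moment maps, with the right identification of period/parameter spaces on both sides; this is the same kind of statement the paper uses implicitly in Step~4 of Proposition \ref{Prop:birrig_Q_termin}, and it can be proved by realising the induced space as a Hamiltonian reduction $(T^*G\times\underline{X}_\lambda^{reg})\red_0\underline P$, but it has to be spelled out rather than ``granted.''
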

\begin{proof}
In the proof we will need a slightly different construction of the homomorphism $W(\lf,\Orb')\rightarrow A$. Pick an element $w\in W(\lf,\Orb')$ and its lift $g$ to $N_G(\lf,\Orb')$. Set $P^1:=gPg^{-1}$, this is a parabolic subgroup of $G$ with Levi subgroup $L$.
Consider the universal deformations of $\Q$-factorial terminalizations associated to $P,P^1$, i.e.,
$\tilde{X}_{\param}:=G\times^P((\lf/[\lf,\lf])^*\times X'\times \mathfrak{p}^\perp),
\tilde{X}^1_{\param}:=G\times^{P^1}((\lf/[\lf,\lf])^*\times X'\times \mathfrak{p}^{1,\perp})$.
Note that $g$ gives rise to an isomorphism of varieties $\tilde{X}_\param\rightarrow \tilde{X}^1_\param$.
It is not an isomorphism of schemes over $\param$, rather the induced map $\param\rightarrow \param$
is $w$. Passing to the spectra of the rings of regular functions we get an isomorphism
$X_\param\rightarrow X_\param$. Its specialization to $0$ coincides with $a_w$: in order to see this we can restrict to $\C\lambda\subset \param$ for a Zariski generic $\lambda\in \param$, we recover the construction from Step 2 of the proof of Proposition \ref{Prop:NW_comput}.

Now we need to produce an embedding of $W_{\underline{X}}$ into the pointwise stabilizer of
$(\underline{\g}/[\underline{\g},\underline{\g}])^*$ inside $W_X$. Thanks to our assumption that the induction from $L$ to $\underline{G}$ is birational, we have $W_{\underline{X}}=\underline{W}(\lf,\Orb')$,
where $\underline{W}(\lf,\Orb')$ is the analog of $W(\lf,\Orb')$ for $\underline{G}$.
Note that $\underline{W}(\lf,\Orb')$ coincides with the pointwise stabilizer of
$(\underline{\g}/[\underline{\g},\underline{\g}])^*$ in $W(\lf,\Orb')$. So we need to prove that
$\underline{W}(\lf,\Orb')$ maps trivially to $A$.

Set $\underline{X}:=\C[\underline{\Orb}]$ and let $\underline{\param}:=((\lf\cap [\underline{\g},\underline{\g}])/[\lf,\lf])^*$. Every element  $w\in\underline{W}(\lf,\Orb')$
defines an automorphism of $\underline{X}_{\underline{\param}}$ that is the identity on $\underline{X}$.
Now choose $P$ such that $\overline{P}:=P\underline{G}$ is a parabolic subgroup in $G$.
Construct $P^1,\tilde{X}^1_\param$ from $w$.
The morphisms $\tilde{X}_{\param},\tilde{X}^1_\param\rightarrow X_\param$ factor through
$G\times^{\overline{P}}((\underline{\g}/[\underline{\g},\underline{\g}])^*\times \underline{X}_{\underline{\param}}\times \overline{\mathfrak{p}}^\perp)$. An element $g\in N_{\underline{G}}(\lf,\Orb')$ lifting $w$ defines an automorphism of the latter variety
that is the identity on $G\times^{\overline{P}}((\underline{\g}/[\underline{\g},\underline{\g}])^*\times \underline{X}\times \overline{\mathfrak{p}}^\perp)$ because $w\in W_{\underline{X}}$. It follows that the induced automorphism of
$X$ is the identity finishing the proof.
%Let $\tilde{\mathfrak{p}}$ be a parabolic  subalgebra in $\g$ with Levi subalgebra $\underline{\g}$
%and let $\tilde{P}\subset G$ be the corresponding parabolic subgroup.
%So we have a partial resolution morphism $\rho:G\times^{\tilde{P}}(\underline{X}\times %\tilde{\mathfrak{p}}^\perp)\twoheadrightarrow
%X$. Take a codimension $2$ symplectic leaf $\mathcal{L}_i$ in $X$ and let $\Sigma_i^\wedge$
%be the formal slice to $\mathcal{L}_i$ in $X$. Then $\rho^{-1}(\Sigma_i^\wedge)$
%is a partial resolution of $\Sigma_i^\wedge$. It is obtained from the full resolution
%by contracting some of the components of the exceptional divisor. Clearly the set of components
%being contracted is $\pi_1(\mathcal{L}_i)$-stable. So the partial resolution defines
%a parabolic subgroup $\underline{\widehat{W}}_i$ in $\widehat{W}_i$ whose simple roots are represented by %the
%components contracted. Note that $W_{\underline{X}}=\prod_{i=1}^k \underline{\widehat{W}}_i$.
%
%The partial resolutions $\rho^{-1}(\Sigma_i^\wedge)$ deform trivially
%in the deformation $G\times^{\tilde{P}}((\underline{\g}/[\underline{\g},\underline{\g}])^*
%\times X'\times \tilde{\mathfrak{p}}^\perp)$.
%This means that the directions of the deformation in $(\lf/[\lf,\lf])^*$ are %$W_{\underline{X}}$-invariant. So $(\underline{\g}/[\underline{\g},\underline{\g}])^*$
%is $W_{\underline{X}}$-invariant. This completes the proof.
\end{proof}

In the proof of Theorem \ref{Thm:bir_sheets} in the next section we will see that in Lemma \ref{Lem:Weyl_stab} we actually have an equality.

\subsection{Consequences}\label{SS_sheet_conseq}
In this section we prove %Proposition \ref{Prop:bir_rigid} and
Theorem \ref{Thm:bir_sheets}.

We start with a criterium for $\C[X_\xi],\C[X_{\xi'}]$ with $\xi,\xi'\in (\lf/[\lf,\lf])_{\Orb'}^{*reg}$
to be isomorphic as filtered algebras.

\begin{Lem}\label{Lem:deform_coinc}
Let $\xi,\xi'\in (\lf/[\lf,\lf])_{\Orb'}^{*reg}$. Then the following are equivalent:
\begin{enumerate}
\item $\C[X_\xi],\C[X_{\xi'}]$ are $G$-equivariantly isomorphic as filtered Poisson algebras.
\item The $G$-orbits induced from $(\lf,\Orb',\xi),(\lf,\Orb',\xi')$ coincide.
\item $\xi'\in W(\lf,\Orb')\xi$.
\end{enumerate}
\end{Lem}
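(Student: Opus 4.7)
The plan is to establish the cycle $(3) \Rightarrow (1) \Rightarrow (2) \Rightarrow (3)$, with $(2) \Rightarrow (3)$ being the main obstacle. For $(3) \Rightarrow (1)$, given $w \in W_{\lf, \Orb'}$ with $w\xi = \xi'$, lift $w$ to $\tilde{w} \in N_G(L, \Orb')$; since the algebra $\C[X_\eta]$ is independent of the choice of parabolic with Levi $L$ (as observed in the proof of Proposition \ref{Prop:bir_rigid}), conjugation by $\tilde{w}$ yields the required $G$-equivariant filtered Poisson isomorphism $\C[X_\xi] \xrightarrow{\sim} \C[X_{\xi'}]$. For $(1) \Rightarrow (2)$, any $G$-equivariant filtered Poisson isomorphism $\varphi\colon \C[X_\xi] \xrightarrow{\sim} \C[X_{\xi'}]$ automatically intertwines the comoment maps, because for each $x \in \g$ the difference $\varphi(\mu_\xi^*(x)) - \mu_{\xi'}^*(x)$ is Poisson-central and hence constant (the Poisson center of $\C[X_{\xi'}]$ equals $\C$ since $X_{\xi'}$ has a dense symplectic leaf), and this constant depends $G$-equivariantly on $x$, so vanishes as $\Hom_G(\g, \C) = 0$. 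Consequently $\overline{\Orb}_\xi = \mu_\xi(X_\xi) = \mu_{\xi'}(X_{\xi'}) = \overline{\Orb}_{\xi'}$, where the moment map images are the orbit closures thanks to the birationality that holds under the assumption $\xi, \xi' \in \mathfrak{z}(\lf)_{\Orb'}^{reg}$.

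For $(2) \Rightarrow (3)$, I first show that $\xi$ and $\xi'$ are $G$-conjugate. Pick $e \in \Orb'$; for generic $n \in \mathfrak{n}$ the element $\xi + e + n$ lies in $\Orb_\xi$ by birationality, and since it lies in $\mathfrak{p}$ its image in $\g \quo G$ is that of its Levi component $\xi + e$, whose semisimple part is $\xi$. Thus the image of $\Orb_\xi$ in $\g \quo G$ determines the $G$-orbit of $\xi$, so $\Orb_\xi = \Orb_{\xi'}$ forces $\xi$ and $\xi'$ to be $G$-conjugate. Since two elements of $\mathfrak{z}(\lf)$ that are $G$-conjugate are in fact $(N_G(L)/L)$-conjugate (a standard consequence of Chevalley restriction applied to the Levi), we obtain $\xi' = w\xi$ for some $w \in N_G(L)/L$.

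The remaining and principal difficulty is to promote $w$ to lie in $W_{\lf, \Orb'} = N_G(L, \Orb')/L$. Set $\tilde{\lf} := \mathfrak{z}_{\g}(\xi)$, a Levi of $\g$ containing $\lf$. Conjugating by suitable elements of the unipotent radical of $P$ (using that $\operatorname{ad}\xi$ is invertible on the complement of $\mathfrak{n} \cap \tilde{\lf}$ in $\mathfrak{n}$) identifies the nilpotent part of $\Orb_\xi$ inside $\tilde{\lf}$ with $\tilde{\Orb} := \operatorname{Ind}_{\lf}^{\tilde{\lf}}(\Orb')$. The same analysis applied to $\Orb_{w\xi} = \Orb_\xi$ and transported back via $w \in N_G(L)$ yields $\operatorname{Ind}_{\lf}^{\tilde{\lf}}(w^{-1}\Orb') = \tilde{\Orb}$ as well. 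By Corollary \ref{Cor_nilp_Q_termin}(1) applied to the reductive Lie algebra $\tilde{\lf}$, a birationally rigid inducing pair for $\tilde{\Orb}$ is unique up to $\tilde{L}$-conjugacy, so there exists $z \in N_{\tilde{L}}(L)$ with $z \Orb' = w^{-1} \Orb'$. Then $wz$ normalizes $L$ and preserves $\Orb'$, hence represents an element of $W_{\lf, \Orb'}$, and $(wz)\xi = w\xi = \xi'$ since $z \in \tilde{L}$ fixes $\xi$, completing the proof.
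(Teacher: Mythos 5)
Your cycle $(3)\Rightarrow(1)\Rightarrow(2)\Rightarrow(3)$ is a genuinely different route from the paper's, which proves $(3)\Rightarrow(2)$, $(2)\Rightarrow(1)$ using the identification $\C[X_\xi]\cong\C[\Orb_\xi]$ and the flat-degeneration argument of Step 4 of Proposition \ref{Prop:birrig_Q_termin}, and then closes the loop by deducing $(1)\Leftrightarrow(3)$ from the deformation-theoretic machinery (Lemma \ref{Lem:isom_deform}, which ultimately rests on Proposition \ref{Prop:NW_comput}). Your $(3)\Rightarrow(1)$ via conjugation by a lift of $w$ and your $(1)\Rightarrow(2)$ via the Poisson-centrality and $\Hom_G(\g,\C)=0$ argument are both correct (the latter is a nice clean observation). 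Your $(2)\Rightarrow(3)$ is an attempt to replace the Namikawa--Weyl group machinery by a direct argument about induction data, which would be a more elementary and self-contained proof if it worked.

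However, there is a genuine gap in $(2)\Rightarrow(3)$. The assertion that two $G$-conjugate elements of $\mathfrak{z}(\lf)$ are automatically $(N_G(L)/L)$-conjugate is \emph{false}: it is not a consequence of Chevalley restriction. For a counterexample, take $\g=\mathfrak{sl}_4$ and $\lf$ the Levi with block sizes $(2,1,1)$, so $\mathfrak{z}(\lf)=\{\operatorname{diag}(a,a,b,c):2a+b+c=0\}$ and $N_G(L)/L\cong\Z/2$ acts by permuting $b,c$ and fixing $a$. The elements $\xi=\operatorname{diag}(1,1,-1,-1)$ and $\xi'=\operatorname{diag}(-1,-1,1,1)$ both lie in $\mathfrak{z}(\lf)$ and are $G$-conjugate (same characteristic polynomial), but $N_G(L)/L$ fixes the coordinate $a$ while $a$ takes opposite signs on $\xi,\xi'$, so they are not $(N_G(L)/L)$-conjugate. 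This does not contradict the lemma (these elements fail the regularity hypothesis, and one checks the induced orbits differ because the nilpotent parts in the two $\mathfrak{gl}_2$-blocks of $\mathfrak{z}_\g(\xi)$ are asymmetric), but it shows you cannot first pass to $N_G(L)/L$ by a general semisimple-element argument and only afterwards invoke the hypothesis that the induced orbits coincide.

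The good news is that your final paragraph already contains the ingredients needed to repair the argument, once it is reorganized so as not to presuppose $w\in N_G(L)$. From $\Orb_\xi=\Orb_{\xi'}$ one gets some $h\in G$ with $h\xi=\xi'$ that also carries the nilpotent part of a Jordan decomposition: $h$ sends $\tilde\lf:=\mathfrak{z}_\g(\xi)$ to $\tilde\lf':=\mathfrak{z}_\g(\xi')$ and $\tilde\Orb:=\operatorname{Ind}_{\lf}^{\tilde\lf}(\Orb')$ to the corresponding orbit $\tilde\Orb'\subset\tilde\lf'$. Both $(\lf,\Orb')$ and $(h\lf,h\Orb')$ are then birationally rigid pairs in $\tilde\lf'$ birationally inducing $\tilde\Orb'$ (here the hypothesis $\xi,\xi'\in\mathfrak{z}(\lf)_{\Orb'}^{reg}$ enters, guaranteeing birationality of the intermediate induction). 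Applying the uniqueness statement of Corollary \ref{Cor_nilp_Q_termin}(1) to the reductive Lie algebra $\tilde\lf'$ produces $z'\in\tilde L'=Z_G(\xi')$ with $z'h(\lf,\Orb')=(\lf,\Orb')$, and then $w:=z'h\in N_G(L,\Orb')$ satisfies $w\xi=z'\xi'=\xi'$, giving $(3)$. So the fix is to run the ``upgrade to $W_{\lf,\Orb'}$'' step directly on an arbitrary $G$-conjugating element $h$, rather than first trying (incorrectly, as a general fact) to replace $h$ by something in $N_G(L)$.
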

\begin{proof}
Clearly (3) implies (2). Let $\Orb_\xi,\Orb_{\xi'}$ denote the orbits
induced from $(\lf,\Orb',\xi)$ and $(\lf,\Orb',\xi')$. Note that
$\C[X_\xi]\cong\C[\Orb_\xi]$ (a $G$-equivariant Poisson isomorphism)
because $\xi\in (\lf/[\lf,\lf])_{\Orb'}^{*reg}$ and similarly
$\C[X_{\xi'}]\cong\C[\Orb_{\xi'}]$. The argument of Step 4 of the proof of
Proposition \ref{Prop:birrig_Q_termin} shows that if $\Orb_\xi=\Orb_{\xi'}$,
then, in the notation there, we have a $\C^\times$-equivariant isomorphism
$X^1_{\C\xi}\cong X^1_{\C\xi'}$. Since $\C[X_\xi]$ is the specialization
of $\C[X^1_{\C\xi}]$ at $1$ -- and similarly for $\C[X_{\xi'}]$,
 the resulting isomorphism  $\C[X_{\xi}]\cong \C[X_{\xi'}]$ is $G$-equivariant
and of filtered algebras. So (2) implies (1).

Let us show that (1) implies (3). The group of the graded $G$-equivariant Poisson automorphisms of $\C[X]$ is $A$. The implication (1)$\Rightarrow$(3) now follows from Remark \ref{Rem:equiv_action}.
\end{proof}

\begin{proof}[Proof of Theorem \ref{Thm:bir_sheets}]
The proof is in several steps.

{\it Step 1}.
Let us prove (1). Clearly, the birational sheets cover $\g$. So we  need
to prove that an orbit determines a birationally minimal induction datum
uniquely up to $G$-conjugacy (which, in particular, implies that the birational
sheets do not intersect). Recall that if an orbit $Gy$ with $y\in \g^*$ is induced from
$(\lf,\Orb',\xi)$, then $\xi$ is $G$-conjugate to $y_s$. This allows to reduce
the proof to the claim that every nilpotent orbit is birationally induced from
a unique birationally minimal induction datum. This is  (1) of Corollary \ref{Cor_nilp_Q_termin}.

{\it Step 2}.
To prove (2) and (3) we first need  to establish the following claim:
\begin{itemize}
\item[(*)] The action of $A=W(\lf,\Orb')/W_X$ on $(\lf/[\lf,\lf])_{\Orb'}^{*reg}/W_X$ is free.
\end{itemize}
Indeed, let $\xi\in (\lf/[\lf,\lf])^{*reg}_{\Orb'}$. Let $\underline{G}$ stand for the stabilizer
of $\xi$ in $G$. Let $\underline{\Orb}$ denote the nilpotent orbit in $\underline{\g}$ induced
from $(\lf,\Orb')$. Set $\underline{X}=\operatorname{Spec}(\C[\underline{\Orb}])$.
%and $\tilde{\underline{X}}=\underline{G}\times_{\underline{P}}(X'\times \underline{\mathfrak{n}})$, where
%$\underline{P}:=P\cap \underline{G}, \underline{\mathfrak{n}}:=
%\underline{\g}\cap \mathfrak{n}$. Then we have $X_\xi=G\times_{\underline{G}}(\{\xi\}\times \underline{X}),
%\tilde{X}_\xi=G\times_{\underline{G}}(\{\xi\}\times \underline{\tilde{X}})$.
By Lemma \ref{Lem:Weyl_stab},  we have  $W_{\underline{X}}\subset  W_{X,\xi}$.
On the other hand, the group $\underline{W}(\lf,\Orb')$,
the analog of $W(\lf,\Orb')$ for $\underline{\g}$, coincides with the stabilizer $(W(\lf,\Orb'))_\xi$.
But the induction from $(\lf,\Orb')$ to $\underline{\Orb}$ is birational, so
by Proposition \ref{Prop:NW_comput}, we have $W_{\underline{X}}=\underline{W}(\lf,\Orb')$. Together with the equality $\underline{W}(\lf,\Orb')=(W(\lf,\Orb'))_\xi$,  this shows (*).

{\it Step 3}. Now let us prove  (3).  Let $X_{\param}^0$ denote the locus in
$\tilde{X}_{\param}$ consisting of orbits of maximal dimension, it embeds into $X_\param$.
Recall, (1) and (2) of Proposition \ref{Prop:NW_comput}, that $\param=(\lf/[\lf,\lf])^*$ and $\tilde{X}_\param=G\times^P(\param\times X'\times \mathfrak{p}^\perp)$.
As in the proof of Lemma \ref{Lem:orbit_independ}, consider the preimage
$\check{S}\subset \tilde{X}_{\param}$ of the Slodowy slice $S$ to $\Orb$ under
$\mu_{\param}:\tilde{X}_{\param}=G\times^P(\param\times X'\times \mathfrak{p}^\perp)
\rightarrow \g^*$. Clearly, $\check{S}\subset X^0_{\param}$. The morphism $\mu_{\param}:\check{S}\rightarrow
S$ is still proper. As we have seen in the proof of Lemma \ref{Lem:orbit_independ}, see Steps 4 and 5,
the morphism $\check{S}\rightarrow \param$ is  finite and \'{e}tale.

As we have seen in the proof of Lemma \ref{Lem:Weyl_stab},
we have a $W(\lf,\Orb')$-action on $X_{\param}$. Since $X_{\param}
\rightarrow \g^*$ is $W_X$-invariant and $S\subset \g^*$ is $Q$-stable,
we see that  $\check{S}\subset X^0_{\param}
\subset X_{\param}$ is $W(\lf,\Orb')$-stable. By Proposition \ref{Prop:sheet_Katsylo},
every $G$-orbit in $Z$ intersects $S$ in a single $A(\Orb)$-orbit. From here we deduce that the projection $\check{S}\twoheadrightarrow \param$ induces an isomorphism $\check{S}/W(\lf,\Orb')\xrightarrow{\sim} \param/W(\lf,\Orb')$.

%Moreover, since $X_{\C\xi}$ is a flat deformation of $X$ and $\tilde{X}_\xi
%\rightarrow X_\xi$ is birational, we see that $\check{S}$ intersects each $\tilde{X}_\xi$
%at a single point. It follows that the projection of $\check{S}$ to $\param$ is an isomorphism.

Note that $Z^{bir}_{\lf,\Orb'}\cap S$ coincides with the image of
$\check{S}\cap \pi^{-1}((\lf/[\lf,\lf])_{\Orb'}^{*reg})$ under $\mu_{\param}$, where we write
$\pi$ for the projection $\tilde{X}_{\param}\rightarrow \param$. This image is the
complement of a closed subset in the image of the proper morphism $\mu_{\param}|_{\check{S}}$.
It follows that $Z^{bir}_{\lf,\Orb'}\cap S$ is a locally closed subvariety.
The proper epimorphism
$\check{S}\cap  \pi^{-1}((\lf/[\lf,\lf])_{\Orb'}^{*reg})\twoheadrightarrow
Z^{bir}_{\lf,\Orb'}/G$ factors through $(\check{S}\cap\pi^{-1}((\lf/[\lf,\lf])_{\Orb'}^{*reg}))/W(\lf,\Orb')$.
Moreover, it follows from Lemma \ref{Lem:deform_coinc} that if the images of two points
from $\check{S}\cap \pi^{-1}((\lf/[\lf,\lf])_{\Orb'}^{*reg})$ in $Z^{bir}_{\lf,\Orb'}/G$
coincide, then the points are $W(\lf,\Orb')$-conjugate.  So  the morphism
$(\check{S}\cap  \pi^{-1}((\lf/[\lf,\lf])_{\Orb'}^{*reg}))/W(\mathfrak{l},\mathbb{O}')\twoheadrightarrow
Z^{bir}_{\lf,\Orb'}/G$ is also injective.
Therefore it  is a bijective normalization morphism. The variety $\left(\check{S}\cap\pi^{-1}((\lf/[\lf,\lf])_{\Orb'}^{*reg})\right)/W(\lf,\Orb')=
(\lf/[\lf,\lf])_{\Orb'}^{*reg}/W(\lf,\Orb')$ is smooth by (*) in Step 2. This proves (3).

{\it Step 4}. Let us prove (2). Note that $X^0_{\param}\cap\pi^{-1}((\lf/[\lf,\lf])_{\Orb'}^{*reg})=\mu_{\param}^{-1}(Z^{bir}_{\lf,\Orb'})$.
The subvariety $X^0_{\param}$ is the union of orbits of maximal dimension in $X_\param$.
Since the action of $W(\lf,\Orb')$ on $X_{\param}$ commutes with that of $G$,
 we see that $W(\lf,\Orb')$ preserves $X^0_{\param}$.  By Lemma \ref{Lem:deform_coinc},
the induced morphism $(X^0_{\param}\cap\pi^{-1}((\lf/[\lf,\lf])_{\Orb'}^{*reg}))/W(\lf,\Orb')\rightarrow Z^{bir}_{\lf,\Orb'}$
is injective. It is also proper so it is finite. It remains to prove that $(X^0_{\param}\cap\pi^{-1}((\lf/[\lf,\lf])_{\Orb'}^{*reg}))/W(\lf,\Orb')$ is smooth.
This will follow if we show that $W(\lf,\Orb')$ acts on $X^0_{\param}\cap \pi^{-1}((\lf/[\lf,\lf])_{\Orb'}^{*reg})$
as a group generated by reflections (by a reflection in this case we mean an automorphism whose fixed
locus is a divisor). We have seen in Step 2 that $(W(\lf,\Orb'))_\xi=W_{\underline{X}}$ is a reflection
group (in its action on $(\lf/[\lf,\lf])^*$).
So it remains to check that $(W(\lf,\Orb'))_{\xi}$ fixes $X_\xi$ pointwise. Recall, Step 2,
that $W_{X,\xi}=(W(\lf,\Orb'))_\xi$. Assume that an element $w\in W_{X,\xi}$
acts on $X_\xi$ nontrivially. The action preserves the filtration on $\C[X_\xi]$ and is the identity
on the associated graded. But a finite group of automorphisms cannot contain a non-unit element with
these properties. This finishes the proof of (2).
\end{proof}

\begin{Rem}\label{Rem:bit_sheet_smooth}
%Our proof is based on \cite{PT} that heavily relies on case-by-case arguments.
%Conceptually, we can prove that the varieties in (2) and (3) are bijective to
%smooth algebraic varieties. It would be interesting to find a conceptual
%proof of isomorphisms.
In fact, in most cases, the birational sheets as well as their quotients are smooth.
Namely, $Z^{bir}_{\lf,\Orb'}$ is smooth provided $\Orb$ (the orbit induced from $(\lf,\Orb')$)
is not one of the seven orbits in \cite[Table 0]{PT} and is not induced
from one of these orbits. In particular, the birational sheets in classical
Lie algebras are always smooth. Let us sketch a proof of the smoothness.

First, let us consider the birational sheet $Z^{bir}_{\lf,\Orb'}$ containing $\Orb$.
Let $Z(\Orb)$ denote the union of sheets containing $\Orb$. One can show that
$Z^{bir}_{\lf,\Orb'}\cap S=(Z(\Orb)\cap S)^{A(\Orb)}$. Using techniques of \cite[Section 5]{PT}
(where an analogous result was proved in the quantum case), one shows
that $(Z(\Orb)\cap S)^{A(\Orb)}$ is an affine space provided $\Orb$ is not one
of the orbits in Table 0 in the introduction of {\it loc.cit.}. So we see that
$Z^{bir}_{\lf,\Orb'}\cap S=Z^{bir}_{\lf,\Orb'}/G$ is smooth. But, for $s\in S\cap Z^{bir}_{\lf,\Orb'}$,
we have $T_s Z^{bir}_{\lf,\Orb'}=T_s Gs\oplus T_s(Z^{bir}_{\lf,\Orb'}\cap S)$. It follows that
$Z^{bir}_{\lf,\Orb'}$ is smooth.

Now consider the  case when $Z^{bir}_{\lf,\Orb'}$
does not contain a nilpotent orbit. Pick $y\in Z^{bir}_{\lf,\Orb'}$. Let $\underline{G}=G_{y_s}$
and let $\underline{Z}^{bir}_{\lf,\Orb'}$ be the birational sheet in $\underline{\g}$ corresponding
to $\lf,\Orb'$, this birational sheet contains $y_n$. Then we have an \'{e}tale morphism $G\times^{\underline{G}}(y_s+\underline{Z}^{bir}_{\lf,\Orb'})\rightarrow Z^{bir}_{\lf,\Orb'}$
with $y$ lying in the image. We deduce from the previous paragraph that both
$Z^{bir}_{\lf,\Orb'}$ and $Z^{bir}_{\lf,\Orb'}/G$ are smooth provided $\Orb$ is not induced
from one of the seven orbits in \cite[Table 0]{PT}.
\end{Rem}

\begin{Rem}\label{Rem:bir_sheet_char}
One can ask for an intrinsic characterization of birational sheets.
As in Step 4 of the proof of Proposition \ref{Prop:birrig_Q_termin}
we see that for a pair $\Orb_1,\Orb_2$ of $G$-orbits lying in the same
birational sheet, the $G$-modules $\C[\Orb_1],\C[\Orb_2]$
are isomorphic. We conjecture that the converse is also true:
if the $G$-modules $\C[\Orb_1],\C[\Orb_2]$ are isomorphic
as $G$-modules, then $\Orb_1,\Orb_2$ lie in the same birational sheet.
\end{Rem}

\section{W-algebras and Orbit method}\label{S_W_Orb}
\subsection{W-algebras}\label{SS_W_algebras}
We start by recalling (finite) W-algebras that were originally defined by Premet
in \cite{Premet1}, although we will follow an approach from \cite{Wquant}.
Throughout the section $G$ is a semisimple group.

Pick a nilpotent orbit $\Orb\subset\g^*$. Let $\chi\in \Orb$. Recall that $S$
stands for a Slodowy slice to $\Orb$ in $\chi$. It is acted on by $Q\times \C^\times$,
where $Q$ is a maximal reductive subgroup of $G_\chi$.

We recall a filtered associative algebra $\Walg$
equipped with a Hamiltonian $Q$-action.  Namely, consider the universal enveloping
algebra $\U=U(\g)$ with its standard PBW filtration $\U=\bigcup_{i\geqslant 0}\U_{\leqslant i}$.
It will be convenient for us to double the filtration and set $F_i \U:=
\U_{\leqslant [i/2]}$. Form the Rees algebra $\U_\hbar:=\bigoplus_{i}(F_i\U) \hbar^i$.
The quotient $\U_\hbar/(\hbar)$ coincides with $S(\g)=\C[\g^*]$.
Consider the completion $\U_\hbar^{\wedge_\chi}$ in the topology induced by
the preimage of the maximal ideal of $\chi$. The space $V:=T_\chi \Orb$ is symplectic.
So we can form
the  homogenized Weyl algebra $\Weyl_\hbar$ of $V$, i.e., $\Weyl_\hbar$
is  the Rees algebra of the usual
Weyl algebra $\Weyl(V)$. We consider the  completion $\Weyl_\hbar^{\wedge_0}$ in the topology
induced by the maximal ideal of $0\in V$. Both $\U^{\wedge_\chi}_\hbar$ and
$\Weyl_\hbar^{\wedge_0}$ come equipped with  actions of $Q\times \C^\times$.
The action of $Q$ on $\U_\hbar^{\wedge_\chi},\Weyl^{\wedge_0}_\hbar$ is
induced from the natural actions of $Q$ on $\g$ and $V$,
respectively. The group $\C^\times$ acts on $\g^*$ via $t.\alpha:=t^{-2}\gamma(t)\alpha$,
where $\gamma:\C^\times\rightarrow G$ is the one-parameter subgroup associated  to
the element $h$ as in Section \ref{SS_LS_induction}.
The group $\C^\times$ naturally acts on $V$. Finally, we set $t.\hbar:=t\hbar$,
this defines $\C^\times$-actions on $\U_\hbar^{\wedge_\chi},\Weyl_\hbar^{\wedge_0}$
by topological algebra automorphisms that  commute with the $Q$-actions.

It was checked in \cite[Section 3.3]{Wquant}, see also \cite[Section 2.2]{W_prim}
that there is a $Q\times\C^\times$-equivariant
$\C[\hbar]$-linear embedding $\Weyl_\hbar^{\wedge_0}\hookrightarrow
\U_\hbar^{\wedge_\chi}$ such that we have the decomposition
\begin{equation}\label{eq:decomp}
\U_\hbar^{\wedge_\chi}\cong \Weyl_\hbar^{\wedge_0}\widehat{\otimes}_{\C[[\hbar]]}\Walg_\hbar',
\end{equation}
where we write $\Walg_\hbar'$ for the centralizer of $\Weyl_\hbar^{\wedge_0}$
in $\U_\hbar^{\wedge_\chi}$. The algebra  $\Walg'_{\hbar}$ comes with an action of $Q\times \C^\times$.
Let us write $\Walg_\hbar$ for the $\C^\times$-finite part of $\Walg_\hbar'$,
then $\Walg'_\hbar$ is naturally identified with the completion $\Walg_\hbar^{\wedge_\chi}$.
Set $\Walg:=\Walg_\hbar/(\hbar-1)$. This is a filtered algebra with a Hamiltonian
$Q$-action that does not depend on the choice of the embedding $\Weyl_\hbar^{\wedge_0}
\hookrightarrow \U_\hbar^{\wedge_\chi}$ up to an isomorphism preserving the filtration
and the action. See \cite[Section 2.1]{W_prim}. The associated graded algebra $\gr\Walg$
coincides with $\C[S]$, where $S$ is the Slodowy slice.

%By a result of Ginzburg, which is a footnote in \cite[Section 5.7]{Premet2},
%the natural map $Z(\U)\rightarrow Z(\Walg)$ (where $Z$ stands for the center)
%is an isomorphism. So we can speak, for example, about $\Walg$-modules
%with integral central character.

\subsection{Restriction functor for HC bimodules}\label{SS_HC_restr}
By a $G$-equivariant Harish-Chandra $\U$-bimodule  (or $(\U,G)$-module) we mean a
finitely generated $\U$-bimodule $\B$ such that the adjoint $\g$-action is locally
finite and integrates to an action of $G$.  We can also introduce the
notion of  a $Q$-equivariant HC $\Walg$-bimodule, see \cite[Section 2.5]{HC}.
We write $\HC^G(\U),\HC^{Q}(\Walg)$ for the categories of equivariant HC bimodules.

In \cite[Sections 3.3,3.4]{HC}, we have constructed an exact functor $\bullet_{\dagger}:\HC^G(\U)
\rightarrow \HC^{Q}(\Walg)$. Let us recall the construction of the functor.
Pick a $G$-equivariant HC bimodule $\B$ and equip it with a good filtration
compatible with the filtration $\F_i\U$. So the Rees $\C[\hbar]$-module $\B_\hbar:=
R_\hbar(\B)$ is a $G$-equivariant $\U_\hbar$-bimodule.  Consider the completion
$\B_\hbar^{\wedge_\chi}$ in the $\chi$-adic topology. This is a $Q\times \C^\times$-equivariant
$\U_\hbar^{\wedge_\chi}$-bimodule (the action of $Q$ is Hamiltonian,
while the action of $\C^\times$ is not). As was checked in \cite[Proposition 3.3.1]{HC},
$\B_\hbar^{\wedge_\chi}=\Weyl_\hbar^{\wedge_0}\widehat{\otimes}_{\C[[\hbar]]} \underline{\B}_\hbar'$,
where $\underline{\B}'_\hbar$ is the centralizer of $\Weyl_\hbar^{\wedge_0}$.
So $\underline{\B}'_\hbar$ is a $Q\times \C^\times$-equivariant $\Walg_\hbar^{\wedge_\chi}$-bimodule.
One can show that $\underline{\B}'_\hbar$ coincides with the completion of its  $\C^\times$-finite
part $\underline{\B}_\hbar$. We set $\B_{\dagger}:=\underline{\B}_\hbar/(\hbar-1)$.
This is an object in $\HC^Q(\Walg)$ that comes equipped with a good filtration.
This filtration depends on the choice of the filtration on $\B$, while $\B_{\dagger}$
itself does not.

Let us list properties of the functor $\bullet_{\dagger}$ established in \cite[Sections 3.3,3.4]{HC}.
See \cite[Theorem 1.3.3, Lemma 3.3.2, Proposition 3.4.1]{HC}.

\begin{Lem}\label{Lem:dag_prop}
The following is true:
\begin{enumerate}
\item $\U_\dagger=\Walg$.
\item $\bullet_{\dagger}$ is an exact functor.
\item $\bullet_{\dagger}$ is a monoidal functor.
\item $\gr\B_{\dagger}$ (with respect to the filtration above)
coincides with the pull-back of $\gr\B$ to $S$.
\item In particular, $\bullet_{\dagger}$ maps the category $\HC_{\overline{\Orb}}^G(\U)$
of all HC bimodules supported on $\overline{\Orb}$ to the category $\HC^Q_{fin}(\Walg)$
of all finite dimensional $Q$-equivariant $\Walg$-bimodules. Further, $\bullet_{\dagger}$
annihilates $\HC^G_{\partial\Orb}(\U)$.
%\item The induced functor $\HC^G_{\Orb}(\U):=\HC_{\overline{\Orb}}^G(\U)/\HC_{\partial\Orb}^G(\U)
%\rightarrow \HC^Q_{fin}(\Walg)$ is a full embedding whose image is closed under taking subquotients.
%\item The functor $\bullet_{\dagger}$ respects central characters on the left and on the right.
%\item For $\B\in \HC_{\overline{\Orb}}(\U)$, we have $\gr\B_{\dagger}=(\gr\B)_{S}$
%(where the associated graded in the left hand side is taken  with respect to the filtration coming
%from the construction of $\B_{\dagger}$).
\end{enumerate}
\end{Lem}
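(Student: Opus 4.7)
The plan is to verify the five claims in order by unwinding the construction of $\bullet_\dagger$ via Rees algebras, the $\chi$-adic completion, and the tensor decomposition (\ref{eq:decomp}). For (1), take $\B=\U$ with its standard filtration; the Rees bimodule is then $\U_\hbar$ itself and (\ref{eq:decomp}) identifies the centralizer of $\Weyl_\hbar^{\wedge_0}$ in $\U_\hbar^{\wedge_\chi}$ with $\Walg_\hbar^{\wedge_\chi}$, whose $\C^\times$-finite part is $\Walg_\hbar$, so setting $\hbar=1$ gives $\Walg$. For (4), one passes to associated graded: the associated graded of $\B_\hbar^{\wedge_\chi}$ is the $\chi$-adic completion of $\gr\B$, the classical shadow of (\ref{eq:decomp}) is the transverse slice decomposition $\C[\g^*]^{\wedge_\chi}\cong \C[[V^*]]\widehat{\otimes}\C[S]^{\wedge_\chi}$, and taking the $\C[[V^*]]$-centralizer followed by the $\C^\times$-finite part recovers exactly the pull-back of $\gr\B$ to $S$.

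Claim (5) then follows from (4) plus elementary geometry. If $\B$ is supported on $\overline{\Orb}$, then $\gr\B|_S$ is supported on $\overline{\Orb}\cap S$; since $S$ meets $\Orb$ only at $\chi$ and, for dimension reasons, has empty intersection with strictly larger orbits, this intersection is the single point $\{\chi\}$, so $\gr\B_\dagger$, and hence $\B_\dagger$, is finite-dimensional. If $\B$ is supported on $\partial\Orb$, then $\chi\notin\operatorname{supp}\gr\B$, so already $\B_\hbar^{\wedge_\chi}=0$, whence $\B_\dagger=0$.

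For exactness (2), the key input is that $\chi$-adic completion is flat on $G$-equivariant Harish-Chandra $\U_\hbar$-bimodules; then extracting the $\Weyl_\hbar^{\wedge_0}$-centralizer under (\ref{eq:decomp}), passing to $\C^\times$-finite parts, and specializing at $\hbar=1$ all preserve exactness. For (3), one checks that (\ref{eq:decomp}) is multiplicative under tensor product: $(\B_1\otimes_\U\B_2)_\hbar^{\wedge_\chi}$ decomposes as $\Weyl_\hbar^{\wedge_0}\widehat{\otimes}(\underline{\B}_{1,\hbar}'\widehat{\otimes}_{\Walg_\hbar'}\underline{\B}_{2,\hbar}')$, the Weyl factors cancel in the centralizer, and specializing at $\hbar=1$ matches $\B_{1,\dagger}\otimes_\Walg \B_{2,\dagger}$. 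I expect (3) to be the main technical obstacle: one needs careful control of completed tensor products, in particular that tensoring two completions agrees with completing the tensor product, together with compatibility with the choice of good filtrations used to define $\bullet_\dagger$ in the first place.
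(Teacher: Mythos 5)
The paper itself offers no proof of this lemma: it records properties established in \cite[Sections 3.3, 3.4]{HC}, so your proposal is really a reconstruction of the argument from that reference. The outlines of (1), (4), and (5) are correct: for (4) the point is indeed that reducing (\ref{eq:decomp}) modulo $\hbar$ gives the classical slice decomposition near $\chi$, and for (5) the finiteness comes from $\overline{\Orb}\cap S=\{e\}$ while the vanishing on $\HC^G_{\partial\Orb}(\U)$ comes from $\chi\notin\operatorname{supp}(\gr\B)$, which forces $\B_\hbar^{\wedge_\chi}=0$.

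For (2), however, you are skipping the key issue. The assignment $\B\mapsto R_\hbar(\B)$ requires a \emph{choice} of good filtration and is not itself a functor on $\HC^G(\U)$. What must be established first --- and what your sketch does not mention --- is that the final output $\B_\dagger$ does not depend on that choice; this is a genuine lemma in \cite{HC}, proved by comparing the completions coming from two good filtrations. Only with this independence in hand does exactness follow by the standard device: given $0\to\B_1\to\B_2\to\B_3\to 0$, choose any good filtration on $\B_2$, give $\B_1$ the induced filtration and $\B_3$ the quotient filtration (both again good), and then passage to Rees modules, $\chi$-adic completion (flatness here is itself a nontrivial input for bimodule completions), extraction of the $\Weyl_\hbar^{\wedge_0}$-centralizer (a direct summand by (\ref{eq:decomp}), hence exact), $\C^\times$-finite parts, and specialization at $\hbar=1$ all preserve exactness. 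Without the filtration-independence step this chain only shows exactness for compatibly chosen filtrations, which is not what (2) asserts. You rightly flag (3) as the hardest point, and your outline for it is the right one, but note that it leans on the same independence result and on the compatibility of the completed tensor decomposition with good filtrations.
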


Recall that by a Dixmier algebra one means an associative algebra $\A$ together with a rational Hamiltonian action of $G$ such that the corresponding quantum comoment map $\U\rightarrow \A$
makes $\A$ a HC bimodule. Since $\bullet_\dagger$ is a monoidal functor it sends a Dixmier algebra
to an algebra $\A_\dagger$ that comes with a homomorphism $\Walg\rightarrow \A_\dagger$. Moreover,  a
good algebra filtration on $\A$ gives rise to an algebra filtration on $\A_\dagger$.
An isomorphism in (4) of the lemma is that of algebras.

The functor $\bullet_{\dagger}: \HC^G_{\overline{\Orb}}(\U)\rightarrow
\HC^Q_{fin}(\Walg)$ has the right adjoint $\bullet^{\dagger}:
\HC^Q_{fin}(\Walg)\rightarrow \HC_{\overline{\Orb}}^G(\U)$, \cite[Proposition 3.4.1,(4)]{HC}. We will need
the construction of the functor $\bullet^{\dagger}$ from \cite[Sections 3.3,3.4]{HC} below so let us recall it.

Pick $\underline{\B}\in \HC^Q_{fin}(\Walg)$ and equip it with a
$Q$-stable $\Walg$-bimodule filtration (for example, we can just take the trivial one).
Then form the Rees bimodule $\underline{\B}_\hbar$
and the $Q$-equivariant $\U_\hbar^{\wedge_\chi}$-bimodule
$\B'_\hbar=\Weyl_\hbar^{\wedge_0}\widehat{\otimes}_{\C[[\hbar]]}
\underline{\B}^{\wedge_\chi}_\hbar$. We can equip
$\B'_\hbar$ with a  $\g$-module structure via
$\xi.b:=\frac{1}{\hbar^2}[\xi,b]$. Let $(\B'_\hbar)_{\g-fin}$
denote the $\g$-finite part of $\B'_\hbar$. This module is
$\C^\times$-stable and we can twist a $\C^\times$-action (see \cite[Section 3.3]{HC}
for details) to get one commuting with $\g$. Let $(\B'_\hbar)_{fin}$
denote the $\C^\times$-finite part in $(\B'_\hbar)_{\g-fin}$,
this is a graded $\U_\hbar$-bimodule. It follows
from \cite[Lemma 3.3.3]{HC} that this bimodule is finitely generated.
Set $\underline{\B}^{\diamondsuit}=(\B'_\hbar)_{fin}/(\hbar-1)$.
This is a HC $\U$-bimodule supported on $\overline{\Orb}$ that comes
with a natural filtration. Note that $\underline{\B}^{\diamondsuit}$
has no sub-bimodules supported on $\partial\Orb$ by the construction.

Also note that $Q/Q^\circ$ naturally acts on $\underline{\B}^{\diamondsuit}$
by filtered $\U$-bimodule automorphisms. We set $\underline{\B}^{\dagger}:=(\underline{\B}^{\diamondsuit})^{Q/Q^\circ}$,
this gives a right adjoint functor of interest. We note that both the kernel and the cokernel
of the adjunction homomorphism $\B\rightarrow (\B_{\dagger})^{\dagger}$ are supported
on $\partial\Orb$, \cite[Proposition 3.4.1,(5)]{HC}.

Now assume $\mathsf{A}$ is a finite dimensional
algebra with a homomorphism $\Walg\rightarrow \mathsf{A}$
and a compatible action of $Q$ such that the composition $\mathfrak{q}\rightarrow \Walg\rightarrow
\mathsf{A}$ is a quantum comoment map for the $Q$-action.
The construction recalled above shows that both $\mathsf{A}^{\diamondsuit}$
and $\mathsf{A}^\dagger$ have Dixmier algebra structures. Moreover, for a $Q$-stable filtration on
$\mathsf{A}$  such that the homomorphism $\Walg\rightarrow \mathsf{A}$ is that of filtered algebras, we get natural filtrations on $\mathsf{A}^{\diamondsuit},\mathsf{A}^\dagger$.
These are good algebra filtrations.

Now let us investigate what happens with $\B_{\dagger}$ and $(\B_{\dagger})^{\dagger}$,
where $\B$ is a quantization of $\C[G/H]$, where $G/H$ is an equivariant cover of $\Orb$. Recall that
$G_\chi/H$ is the fiber of $G/H\rightarrow \Orb$ over $\chi$, it is finite.

\begin{Lem}\label{Lem:restr_quant}
The following is true:
\begin{enumerate}
\item As a filtered algebra $\B_{\dagger}$  is $Q$-equivariantly isomorphic to
the algebra of functions $\C[G_\chi/H]$
(with the trivial filtration).
\item We have $\gr (\B_{\dagger})^{\dagger}=\C[G/H]$ and the natural homomorphism
$\B\rightarrow (\B_{\dagger})^{\dagger}$ is an isomorphism of filtered algebras.
\end{enumerate}
\end{Lem}
\begin{proof}
Let us prove (1). That $\B_{\dagger}$ is an algebra with a natural filtration follows directly from the discussion following
Lemma \ref{Lem:dag_prop}.
The isomorphism $\gr\B_{\dagger}\cong \C[G_\chi/H]$ follows from (4) of  Lemma
\ref{Lem:dag_prop}. This is a graded algebra isomorphism, where $\C[G_\chi/H]$
is in degree $0$. This shows $\B_\dagger\cong \C[G_\chi/H]$.

Let us now prove (2).
Let $\I$ be an $H$-stable codimension $1$ ideal of $\Walg$. We can form the algebra $\mathsf{A}=
\bigoplus_{h\in G_\chi/H}\Walg/h.\I$. For example, $\B_{\dagger}$ is of this form by (1).
Consider the algebra $\mathsf{A}^{\dagger}$. The algebra $\gr\mathsf{A}^{\dagger}$ is
the quotient of the $\g$-finite part of $\Weyl_\hbar^{\wedge_0}\widehat{\otimes}_{\C[[\hbar]]}
\mathsf{A}^{\wedge_\chi}_\hbar$ by the ideal generated by $\hbar$. This embeds into
the $\g$-finite part of $(\Weyl_\hbar^{\wedge_0}/(\hbar))\otimes \gr\mathsf{A}$.
By \cite[Proposition 3.2.2,3.2.3]{HC}, the latter coincides with $\C[G/H]$.
%It follows from the construction in
%\cite[Section 3.3]{HC} recall above
%that $\gr\left(\mathsf{A}^{\dagger}\right)$ admits a graded $G$-equivariant
%algebra embedding into the finite part of.

Now consider the natural homomorphism $\B\rightarrow (\B_{\dagger})^{\dagger}$.
This is a homomorphism of filtered algebras. The kernel is supported on $\partial\Orb$ by
Lemma \ref{Lem:dag_prop}.
Note however that $\B$ has no zero
divisors and so is prime. It follows from \cite[Corollar 3.6]{BoKr} that
the kernel is zero. So $\B\hookrightarrow (\B_{\dagger})^{\dagger}$, an inclusion of
filtered algebras. Together
with the inclusion $\gr (\B_{\dagger})^{\dagger}\hookrightarrow \C[G/H]=
\gr \B$, this implies that $\B\hookrightarrow (\B_{\dagger})^{\dagger}$
is an isomorphism of filtered algebras.
\end{proof}

\subsection{Orbit method}\label{SS_Orb_method}
Let $\Orb_1$ be an adjoint orbit corresponding to a birationally minimal induction datum $(\lf,\Orb',\xi)$.
By Theorem \ref{Thm:bir_sheets}, $(\lf,\Orb',\xi)$ is recovered uniquely from
$\Orb_1$ up to $G$-conjugacy. Set $X:=\Spec(\C[G\times^P (X'\times \mathfrak{p}^\perp)])$,
where, as usual, $X':=\operatorname{Spec}(\C[\Orb'])$.
 Let $\A_\xi$
denote the quantization of $\C[X]$ with quantization parameter $\xi$. It is uniquely determined
by $\Orb_1$  as a filtered algebra with a $G$-action. Indeed, $(\lf,\Orb',\xi)$ is determined uniquely
up to $G$-conjugacy from $\Orb_1$, see (1) of Theorem \ref{Thm:bir_sheets}.
Then we can use Corollary \ref{Lem:deform_coinc} to see that $\C[X_\xi]$ is uniquely 
determined up to a $G$-equivariant isomorphism of filtered Poisson algebras. Remark \ref{Rem:equiv_action} then shows that $\A_\xi$ is uniquely determined up to a
$G$-equivariant isomorphism of filtered algebras.

The algebra $\A_\xi$   comes with
a quantum comoment map $\g\rightarrow \A_\xi$. The classical comoment map
$\g\rightarrow \C[X]$ lifts to a quantum comoment map $\g\rightarrow \A_\xi$, the argument is
similar to Step 1 of the proof of Proposition \ref{Prop:birrig_Q_termin}.

We will write $\A(\Orb_1)$
for $\A_\xi$, $\J(\Orb_1)$ for the kernel of $\U\rightarrow \A(\Orb_1)$. We note that
$\J(\Orb_1)$ is primitive. Indeed, the intersection of $\J(\Orb_1)$ with the center
of $\U$ is a maximal ideal in the center because $(\U/\J(\Orb_1))^G\hookrightarrow \A_\xi^G$
and $\dim \A_\xi^G=\dim \C[X]^G=1$. Further, the algebra $\A_\xi$ has no zero divisors
because its associated graded is $\C[X]$, which has that property. It follows
that the ideal $\J(\Orb_1)$ is completely prime, hence prime. It is known, see, for example, \cite[Section 7.3]{Jantzen}, that a prime ideal in $\U$ whose intersection with the center is a maximal ideal
is primitive.

The following is our version of the Orbit method.

\begin{Thm}\label{Thm:orb_method}
The following is true.
\begin{enumerate}
\item If there is a $G$-equivariant algebra isomorphism $\A(\Orb_1)\xrightarrow{\sim} \A(\Orb_2)$,
then $\Orb_1=\Orb_2$.
\item Moreover, assume that $\g$ is classical. If $\J(\Orb_1)=\J(\Orb_2)$, then
$\Orb_1=\Orb_2$.
\end{enumerate}
\end{Thm}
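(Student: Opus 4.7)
The plan for part (1) is to pass to associated gradeds to recover the birational-induction data geometrically, and then invoke Theorem \ref{Thm:quant_classif}. Starting from a $G$-equivariant filtered isomorphism $\A_{\Orb_1}\xrightarrow{\sim}\A_{\Orb_2}$, I would take associated graded to obtain a $G$-equivariant graded Poisson isomorphism $\C[X_1]\xrightarrow{\sim}\C[X_2]$, where $X_i=\Spec(\C[G/H_i])$. Since $G/H_i$ is the unique open $G$-orbit in $X_i$, this forces $G/H_1\cong G/H_2$ as $G$-varieties, so after $G$-conjugation one may assume $H_1=H_2=H$ and $X_1=X_2=X$. Corollary \ref{Cor_nilp_Q_termin} then recovers the common birational induction pair $(\lf,\Orb')$ from the underlying nilpotent orbit. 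Both $\A_{\Orb_i}$ are filtered quantizations of $\C[X]$ with parameters $\xi_i\in\param=\mathfrak{z}(\lf)$, so Theorem \ref{Thm:quant_classif} gives $W\xi_1=W\xi_2$. Since $W\subset W_{\lf,\Orb'}$ by Proposition \ref{Prop:NW_comput}, the $\xi_i$ are $W_{\lf,\Orb'}$-conjugate, and Lemma \ref{Lem:deform_coinc} yields $\Orb_1=\Orb_{\xi_1}=\Orb_{\xi_2}=\Orb_2$.

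For part (2) the plan is to upgrade $\J_{\Orb_1}=\J_{\Orb_2}$ to a $G$-equivariant filtered isomorphism $\A_{\Orb_1}\cong\A_{\Orb_2}$ and then apply part (1). Set $\mathcal{I}:=U(\g)/\J_{\Orb_1}=U(\g)/\J_{\Orb_2}$; exactness of $\bullet_\dagger$ (Lemma \ref{Lem:dag_prop}) identifies $\mathcal{I}_\dagger$ with a common quotient $\Walg/\J_\dagger$. By Lemma \ref{Lem:restr_quant}(1), $\A_{\Orb_i,\dagger}\cong\C[Z_G(e)/H_i]$ as $Q$-equivariant $\Walg$-algebras, and this algebra decomposes as a direct sum of one-dimensional $\Walg$-modules indexed by $A(\Orb)/\overline{H}_i$, where $\overline{H}_i:=H_i/Z_G(e)^\circ$. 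For classical $\g$, Losev's classification of finite-dimensional quotients of finite $W$-algebras (cf.\ \cite{W_prim}) guarantees that $\J_\dagger$ cuts out a single $A(\Orb)$-orbit of one-dimensional representations and identifies $\overline{H}_i$ as the stabilizer in $A(\Orb)$ of any such representation. Hence $\overline{H}_1,\overline{H}_2$ are $A(\Orb)$-conjugate, producing a $Q$-equivariant $\Walg$-algebra isomorphism $\A_{\Orb_1,\dagger}\cong\A_{\Orb_2,\dagger}$. Applying $\bullet^\dagger$ and invoking Lemma \ref{Lem:restr_quant}(3) to identify $\A_{\Orb_i}\cong(\A_{\Orb_i,\dagger})^\dagger$ yields the desired $G$-equivariant filtered isomorphism $\A_{\Orb_1}\cong\A_{\Orb_2}$, and part (1) finishes the argument.

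The principal obstacle is the classical-type step above: recovering the $A(\Orb)$-conjugacy class of $\overline{H}_i\subset A(\Orb)$ from the single ideal $\J_\dagger\subset\Walg$. This is precisely where Losev's classification of one-dimensional representations of classical $W$-algebras enters, and where a type-uniform statement is not available; verifying it requires understanding the fine structure of the set of one-dimensional $\Walg$-modules together with their $A(\Orb)$-action. A secondary technical point is to check that the reconstruction $\A_{\Orb_i}\cong(\A_{\Orb_i,\dagger})^\dagger$ is functorial enough to promote a $Q$-equivariant isomorphism of restrictions to a $G$-equivariant isomorphism of the original algebras; this should follow from the functoriality built into the construction of $\bullet^\dagger$ in Section \ref{SS_HC_restr}, since that construction takes $Q$-equivariant input to $G$-equivariant output by its very definition.
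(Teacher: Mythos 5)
Your plan for part (1) is in the right spirit but has two real issues. First, the hypothesis is only a $G$-equivariant \emph{algebra} isomorphism, not a \emph{filtered} one, so passing to associated graded is not immediately licensed. The paper avoids this by applying the restriction functor $\bullet_{\dagger}$ directly to the isomorphism viewed as a morphism of Harish-Chandra bimodules: since $\bullet_{\dagger}$ is defined on the category $\HC^G(\U)$ independently of choices of filtration, one obtains $\A(\Orb_1)_{\dagger}\cong\A(\Orb_2)_{\dagger}$, i.e.\ $\C[Z_G(e)/H_1]\cong\C[Z_G(e)/H_2]$ as $Q$-equivariant $\Walg$-algebras, whence $H_1=H_2$ with no appeal to filteredness. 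Second, even granting a filtered isomorphism, your claim ``Theorem \ref{Thm:quant_classif} gives $W\xi_1=W\xi_2$'' is not quite right: the associated graded of your isomorphism is a $G$-equivariant graded Poisson \emph{automorphism} $\sigma$ of $\C[X]$ that need not be the identity, while Theorem \ref{Thm:quant_classif} classifies quantizations up to isomorphisms whose $\gr$ \emph{is} the identity. What you actually get, by Lemma \ref{Lem:isom_deform}, is that $W\xi_1, W\xi_2$ lie in the same orbit of the $G$-equivariant graded Poisson automorphism group of $\C[X]$ acting on $\param/W$; by Proposition \ref{Prop:NW_comput} this group is $W_{\lf,\Orb'}/W$, so you still reach $W_{\lf,\Orb'}$-conjugacy of $\xi_1,\xi_2$ and conclude via Lemma \ref{Lem:deform_coinc} --- but the cited justification is the wrong one. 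The paper sidesteps the automorphism ambiguity by reconstructing $\A(\Orb_i)$ as $(\A(\Orb_i)_{\dagger})^{\dagger}$, whose construction comes with a canonical filtration and a canonical identification of $\gr$ with $\C[G/H]$.

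For part (2) your gap is genuine and you rightly flag it yourself: nothing in the paper (and nothing in \cite{W_prim}, which only treats type A) supplies a classification of one-dimensional $\Walg$-modules from which $\J_{\dagger}$ would recover the $A(\Orb)$-conjugacy class of $\overline{H}_i$. The paper's actual argument is considerably more elementary and involves no such classification; the only structural input from classical type is that $A(\Orb)$ is \emph{abelian}. Assuming $H_1\neq H_2$, abelianness makes $H_1H_2$ a subgroup of $Z_G(e)$, so one can build the $Z_G(e)/Z_G(e)^\circ$-equivariant bundle algebra $\mathsf{A}$ over $Z_G(e)/(H_1\cap H_2)$ with fiber $\Walg/\I$ for a common $H_1H_2$-stable codimension one ideal $\I\supset\J(\Orb_i)_{\dagger}$. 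The finite group $\Gamma/\Gamma_i\cong (H_1H_2)/H_i$ then acts nontrivially on $\A(\Orb_i)\cong(\mathsf{A}^{\dagger})^{\Gamma_i}$ by filtered algebra automorphisms that fix the image of $U(\g)$. Via Section \ref{SS_iso} this descends to a nontrivial Poisson automorphism of $\C[X_{\xi_i}]$ fixing the image of $S(\g)$, contradicting the birationality of the moment map $X_{\xi_i}\to\overline{\Orb}_i$ that is built into the notion of birational induction data. This contradiction forces $H_1=H_2$, after which part (1) applies. The key idea you were missing is thus not a $\Walg$-module classification but this contradiction with birationality, powered only by abelianness of $A(\Orb)$.
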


We note that (1) is a weaker version of \cite[Conjecture 3.9]{Vogan_notes} -- we only deal with coadjoint
orbits and not with their covers\footnote{In the forthcoming paper \cite{LMM} we prove
the full conjecture using the same method}.

\begin{proof}
%Note that the proof reduces to the case when $\g$ is simple.
%
Let $(\lf_1,\Orb'_1,\xi_1),(\lf_2,\Orb'_2,\xi_2)$ be the birationally minimal induction data giving
$\Orb_1,\Orb_2$. Let $\chi_1,\chi_2$ be points in the coadjoint orbits  induced from $(\lf_1,\Orb_1,0),
(\lf_2,\Orb_2,0)$ and let $H_i\subset G_{\chi_i}$ be the finite index subgroups
produced from $(\lf_i,\Orb_i,0), i=1,2$.

The proofs of (1),(2) are in several steps.

{\it Step 1}. If $\J(\Orb_1)=\J(\Orb_2)$, then the nilpotent orbits
induced from $(\lf_i,\Orb'_i)$ are the same. Indeed, the closure of the orbit induced
from $(\lf_i,\Orb'_i)$ is the associated variety of $\J(\Orb_i)$. Now note that $\J(\Orb_i)$ coincides with the left annihilator of
the HC bimodule $\mathcal{A}(\Orb_i)$. So if $\A(\Orb_1)\xrightarrow{\sim} \A(\Orb_2)$,
then $\J(\Orb_1)=\J(\Orb_2)$. Therefore we can assume that the nilpotent orbits
induced from $(\lf_i,\Orb_i)$ are the same. Let us write $\Orb$ for this common orbit
and $\chi$ for $\chi_i$.

{\it Step 2}. Assuming $\mathcal{A}(\Orb_1)\cong\mathcal{A}(\Orb_2)$, a $G$-equivariant algebra 
isomorphism, let us show that $H_1,H_2$ are  conjugate in $G_\chi$.  Consider the $W$-algebra $\Walg$ corresponding to
$\Orb$ and the corresponding restriction functor $\bullet_{\dagger}$. %Any quantization
%of $\C[G/H]$ gets sent to the $Q$-equivariant $\Walg$-bimodule that is isomorphic
%to $\C Z_G(e)/H$ as a $Q$-module.
By (1) of Lemma \ref{Lem:restr_quant}, $\A(\Orb_i)_{\dagger}=\C [G_\chi/H_i]$.
This implies $H_1$ and $H_2$ are conjugate. We can conjugate them and assume they coincide.

{\it Step 3}.  So if we have a $G$-equivariant algebra isomorphism
$\A(\Orb_1)\xrightarrow{\sim} \A(\Orb_2)$, then $H_1=H_2,\J(\Orb_1)=\J(\Orb_2)$.
Set $X:=\operatorname{Spec}(\C[G/H_i])$.
Let us prove (1).
The algebra $\A(\Orb_i)_{\dagger}$ together with a homomorphism $\Walg\rightarrow \A(\Orb_i)_{\dagger}$ can be described as follows.
Pick an $H_i$-stable codimension $1$ ideal $\I\subset \Walg$ containing
$\J(\Orb_i)_{\dagger}$. Form the $G_\chi/G_\chi^\circ$-homogeneous bundle
of algebras over $G_\chi/H_i$ with fiber $\Walg/\I$, denote it  by $\mathsf{A}_i$. By the construction,
$\A(\Orb_i)_{\dagger}=\mathsf{A}_i$. Since $H_1=H_2,\J(\Orb_1)=\J(\Orb_2)$, we have a $Q$-equivariant isomorphism of algebras $\mathsf{A}_1\cong \mathsf{A}_2$.
From (2) of Lemma \ref{Lem:restr_quant} we deduce that $\A(\Orb_1)\cong \A(\Orb_2)$,
a $G$-equivariant isomorphism of filtered algebras. Remark \ref{Rem:equiv_action}
implies that we have a $G$-equivariant filtered Poisson algebra isomorphism
$\C[X_{\xi_1}]\cong \C[X_{\xi_2}]$. We use Lemma \ref{Lem:deform_coinc} to conclude
that $\Orb_1=\Orb_2$.

{\it Step 4}. Let us proceed to proving part (2). We only need to check that $H_1=H_2$.
We will prove a more general claim: we have $H_1=H_2$ provided $A(\Orb)$ is abelian, which is always the case for
classical types.

So suppose that $H_1\neq H_2$. Let $\I$ be an $H_1H_2$-stable codimension $1$ ideal in $\Walg$
containing $\J(\Orb_i)_{\dagger}$.
Let $\Gamma_i:=H_i/H_1\cap H_2, \Gamma:=(H_1H_2)/(H_1\cap H_2)$.
Consider the $G_\chi/G_\chi^\circ$-equivariant algebra $\mathsf{A}$ that is the homogeneous bundle over
$Z_G(e)/(H_1\cap H_2)$ with fiber $\Walg/\I$ over $1$. The group $\Gamma$
acts on $\mathsf{A}$ by $G_\chi/G_\chi^\circ$-equivariant filtered algebra automorphisms
fixing the image of $\Walg$. Moreover, $\A(\Orb_i)_{\dagger}=\mathsf{A}^{\Gamma_i}$.
So the group $\Gamma$ also acts on $\mathsf{A}^{\dagger}$ by filtered algebra automorphisms fixing
the image of $U(\g)$. Similarly to (2) of Lemma \ref{Lem:restr_quant}, we have
$G$-equivariant filtered algebra isomorphisms $\A(\Orb_i)\cong (\mathsf{A}^{\dagger})^{\Gamma_i}$. In particular,
we see that the group $\Gamma/\Gamma_i$ acts on $\A(\Orb_i)$ by filtered algebra
automorphisms lifting the action of this group on $\C[G/H_i]$.
This means that the parameter $W_i \xi_i$ of the quantization
$\A(\Orb_i)$ is stable under the action of $\Gamma/\Gamma_i$ (where we write $W_i$
for the Namikawa-Weyl group of $\C[G/H_i]$). It follows from Remark \ref{Rem:equiv_action}
that the filtered deformation $\C[X_{\xi_i}]$ of $\C[G/H_i]$
also carries an action of $\Gamma/\Gamma_i$ by $G$-equivariant filtered Poisson algebra
automorphisms. Since $G$ is semisimple, the moment map $X_{\xi_i}\rightarrow \g^*$
is unique hence is preserved by $\Gamma/\Gamma_i$.
This contradicts the assumption that the moment map $X_{\xi_i}\rightarrow \overline{\Orb}_{i}$
is birational and completes the proof.
\end{proof}

\begin{Rem}
There are 12 orbits in exceptional Lie algebras with noncommutative $A(\Orb)$:
it can be equal to $S_3$ (10 orbits, the easiest example is the subregular orbit for
$G_2$), $S_4$ (for a single orbit in type $F_4$) or $S_5$
(for a single orbit in type $E_8$). We have checked that the conclusion of (2)
is still true for most of these orbits and we do not know what happens for the rest.
\end{Rem}

%Let us also record some properties of the ideals $\mathcal{J}(\Orb_1)$.
%
%\begin{Lem}\label{Lem:J_properties}
%The ideal $\mathcal{J}(\Orb_1)$ has the following properties:
%\begin{itemize}
%\item $\mathcal{J}(\Orb_1)$ is completely prime.
%\item $\mathcal{J}(\Orb_1)$ is primitive.
%\end{itemize}
%\end{Lem}
%\begin{proof}
%We have an algebra embedding $\U/\mathcal{J}(\Orb_1)\hookrightarrow \A(\Orb_1)$. The target is
%a quantization of an algebra without zero divisors, so has no zero divisors itself.
%It follows $\mathcal{J}(\Orb_1)$ is completely prime. In particular,
%$\mathcal{J}(\Orb_1)$ is prime. Since the associated variety of this ideal is the closure of
%a nilpotent orbit, it is primitive.
%\end{proof}

\subsection{Toward description of the image}\label{SS_image}
An interesting question is to describe the image of the map $\g^*/G\hookrightarrow \operatorname{Prim}(\g)$.
We will sketch the conjectural results.

Note that the orbits lying in the union $Z(\Orb)$ of all sheets containing $\Orb$ get
mapped to $\operatorname{Prim}_{\Orb}(\g)$, the set of primitive ideals whose
associated variety is $\overline{\Orb}$. Let $\Walg$ denote the  W-algebra
corresponding to $\Orb$. Recall, \cite[Section 1.2]{HC}, that $\operatorname{Prim}_{\Orb}(\g)$
is the quotient of $\operatorname{Irr}_{fin}(\Walg)$, the set of isomorphism classes
of finite dimensional irreducible modules over $\Walg$, by the action of $A(\Orb)$.

By Lemma \ref{Lem:restr_quant},   the kernel $U(\g)\rightarrow \A_\xi$, where $\A_\xi$ is a
quantization of $\C[G/H]$ corresponds to a one-dimensional $\Walg$-module.
When $A(\Orb)$ is abelian, the argument of Step 4 of the proof of
Theorem \ref{Thm:orb_method} implies that   any such kernel lies in the image of $Z(\Orb)/G\hookrightarrow
\operatorname{Prim}_{\Orb}(\g)$. When $A(\Orb)$ is not abelian, then this does not
need to be the case, one gets a counter-example for the subregular orbit in $G_2$.

So suppose $A(\Orb)$ is abelian.
Now let us impose another assumption on $\Orb$: we assume that $\Orb$ is not one
of the six bad orbits in \cite[Introduction]{Premet3} and is not induced from such an orbit.
Recall that the six orbits (all in exceptional Lie algebras) are characterized by the
property that $\Orb\subsetneq \operatorname{Spec}(\C[\Orb])^{reg}$, see the tables
in \cite{FJLS}.

Results from \cite{PT} imply that, under our assumption on $\Orb$, every {\it multiplicity free} (in the terminology of
\cite{PT}) primitive ideal in $\operatorname{Prim}_{\Orb}(\g)$  arises as the kernel of $U(\g)\rightarrow \A_\xi$,
where $\A_{\xi}$ is a suitable quantization of $\C[\Orb]$. Conversely, any such kernel
is a multiplicity free primitive ideal.

From now on let us suppose that $\g$ is classical. We conjecture that the image of
the injective map $Z(\Orb)/G\rightarrow \operatorname{Prim}_{\Orb}(\g)$ from
Theorem \ref{Thm:orb_method} coincides with the subset of all primitive ideals
corresponding to the one-dimensional representations of $\Walg$. This conjecture
constitutes a right statement of the Orbit method for classical Lie algebras\footnote{As we have mentioned in Introduction,  this has been established in
\cite{Topley}.}.

\end{document}